\def\CT{\mathcal{T}}
\def\CM{\mathcal{X}}
\def\CN{\mathcal{Y}}
\newcommand\err{\texttt{err}}
\newcommand\eff{\texttt{eff}}
\numberwithin{equation}{section}
\newcommand\bT{\boldsymbol{T}}
\newcommand\beps{\boldsymbol{\varepsilon}}
\newcommand\bsig{\boldsymbol{\sigma}}
\newcommand\bu{\boldsymbol{u}}
\newcommand\bv{\boldsymbol{v}}
\newcommand\R{\mathbb{R}}
\renewcommand\O{\Omega}
\newcommand\G{\Gamma}
\renewcommand\H{\mathrm{H}}
\renewcommand\L{\mathrm{L}}
\newcommand\Q{\mathrm{Q}}
\newcommand\X{\mathrm{X}}
\newcommand\bdiv{\mathop{\mathbf{div}}\nolimits}
\newcommand\vdiv{\mathop{\mathrm{div}}\nolimits}
\newcommand\tr{\mathop{\mathrm{tr}}\nolimits}
\renewcommand\sp{\mathop{\mathrm{sp}}\nolimits}
\newcommand\bn{\boldsymbol{n}}
\newcommand\bw{\boldsymbol{w}}
\numberwithin{equation}{section}
\numberwithin{theorem}{section}
\numberwithin{lemma}{section}
\numberwithin{corollary}{section}
\numberwithin{proposition}{section}
\numberwithin{remark}{section}
\def\CE{{\mathcal E}}
\def\CT{{\mathcal T}}
\newcommand{\vertiii}[1]{{\left\vert\kern-0.25ex\left\vert\kern-0.25ex\left\vert #1 
		\right\vert\kern-0.25ex\right\vert\kern-0.25ex\right\vert}}
\begin{document}
\title{Finite element analysis of the nearly incompressible
linear elasticity eigenvalue problem with variable coefficients}
\author{Arbaz Khan \and Felipe Lepe \and David Mora \and Jesus Vellojin}
\institute{F. Lepe \and D. Mora \and J. Vellojin \at
GIMNAP, Departamento de Matem\'atica,
Facultad de Ciencias, \\
Universidad del B\'io-B\'io, Casilla 5-C, Concepci\'on, Chile.\\
 \email{\{flepe,dmora,jvellojin\}@ubiobio.cl} \and
D. Mora \at
CI$^2$MA, Universidad de Concepci\'on, Casilla 160-C, Concepci\'on, Chile.
\and
A. Khan \at
Department of Mathematics, Indian Institute of Technology Roorkee, Roorkee 247667, India.\\
\email{arbaz@ma.iitr.ac.in}
}

\date{Received: date / Revised version: date \hfill Updated: \today}

\maketitle

\begin{abstract}
In this paper we present a mathematical and numerical
analysis of an eigenvalue problem associated to
the elasticity-Stokes equations stated in two and three dimensions.
Both problems are related through the Herrmann pressure.
Employing the Babu\v ska--Brezzi theory, it is proved that the
resulting continuous and discrete variational formulations are well-posed.
In particular, the finite element method is based on general inf-sup stables pairs
for the Stokes system, such that, Taylor--Hood finite elements.
By using a general approximation theory for compact operators,
we obtain optimal order error estimates for the eigenfunctions
and a double order for the eigenvalues.
Under mild assumptions, we have that these estimates hold
with constants independent of the Lam\'e coefficient $\lambda$.
In addition, we carry out the reliability and efficiency
analysis of a residual-based a posteriori error estimator for the spectral problem.
We report a series of numerical tests in order to assess the
performance of the method and its
behavior when the nearly incompressible case of elasticity is considered.

\end{abstract}

\keywords{elasticity eigenvalue problem \and Stokes eigenvalue problem \and
finite elements \and a priori error estimates \and a posteriori error bounds}
\subclass{65N30 \and 65N12 \and 76D07 \and 65N15}

\section{Introduction}

The accurate solution of eigenvalue problems coming from continuum mechanics is crucial because the stability
of systems relying on fluids or structures, or their coupling, hinges on understanding vibration modes associated with spectral problems.
Notable monographs currently addressing this subject are cited here for reference \cite{MR2652780}.

One important aspect that is still under investigation is understanding the eigenvalue properties in parameter limits between Stokes and elasticity equations. It is well-established that in elasticity equations where the Poisson ratio is close to $1/2$, classical numerical methods exhibit the locking phenomenon,
which can be mitigated, for example, with  mixed formulations, least-squares, among others
(see, e.g. \cite{MR4570534,meddahi2013finite,MR4542511,MR4396855,MR4271564,MR4445536,MR3973678}).
In this incompressible regime, the equations align with Stokes equations. A study in \cite{MR3962898} demonstrates that the spectrum of the elasticity eigenvalue problem converges to that of the Stokes eigenvalue problem as the Poisson ratio tends to $1/2$. Recent research, as seen in \cite{zhong2023spectral}, proposes  a mixed method for the linear elasticity where   the stress, rotations, and displacement are considered as unknowns. Here the convergence of the method also works correctly for the case when Poisson ratio tends to $1/2$. Also, novel strategies like least squares formulations have been applied to elasticity eigenvalue problems \cite{MR4271564}.
		
In the realm of a posteriori analysis for elasticity eigenvalue problems, we have, for example, \cite{MR3973678}, where a locking-free mixed formulation based on the finite element method has been proposed for the load problem, accompanied by a priori and a posteriori error estimates. The authors provide estimation strategies that prove to be robust with respect to the material parameters. They ensure robustness in elasticity, which involves multiple parameters compared to the single viscosity parameter in the Stokes problem. We have also the work of \cite{bertrand2021adaptive}. Here, the authors aim to expand the results obtained in a previous study of a conforming mixed finite elements for the elasticity problem to address the related eigenvalue problem. Their study shows that using a residual-type error estimator can effectively steer an adaptive scheme Additionally, they explore how implementing a postprocessing technique can improve the convergence rate of the scheme. Postprocessing techniques have also been explored recently in \cite{MR4471016}, where a reliable and efficient  a posteriori error estimators for mixed formulations for the elasticity eigenvalue problem is studied,  including an analysis for the perfectly incompressible case.

The concepts presented in \cite{MR3973678} can be extended to the eigenvalue problem under the same norm.  It is well-known that such a problem lacks a unique solution due to eigenvalues/eigenfunctions exhibiting a multiplicity greater than one. Adding to this, we have a parameter dependent problem that connects the elasticity eigenvalue problem with the Stokes one, where the Poisson ratio serves as the involved parameter. This is an important issue, since according this parameter changes, the spectrum of the elasticity operator changes and hence, the multiplicity of the eigenvalues starts to behave different according to this. Since we have a fixed eigenvalue problem (Stokes) as a limit, the elasticity eigenvalue problem change its spectrum according to the change of the Poisson ratio when we are approximating to the spectrum of Stokes. Then, the question that emerge is how does the spectrum of the parameter-dependent elasticity problem converge towards the Stokes spectrum. This issue is difficult to handle and must be studied.

Our contribution focuses on analyzing both the Stokes and elasticity eigenproblems within a unified locking-free system. The key lies in considering the Herrmann pressure that connects the fluid and elastic representations. The introduction of variable coefficients, reflecting the physical attributes of the structure, adds another layer of complexity to our analysis. Specifically, these variable coefficients demands the use of a weighted norm with spatial-dependent parameters in order to ensure robustness of the estimates. It is worth emphasizing that, to the best of the authors' knowledge, this work represents a pioneering effort in examining the elasticity/Stokes eigenvalue problem with variable coefficients.  We also propose a scheme that expand the numerical analysis to encompass two families of finite elements known for their inf-sup stability: the Taylor-Hood elements and the mini element.
Moreover, others inf-sup stable finite elements could be also considered.
However, for an accurate analysis of the eigenvalue problem, certain adjustments become essential. More precisely, a suitable  scaling of the problem with respect to physical parameters is required in order   to achieve robustness for the spectral problem. The approach is to leverage simple tools for the analysis, and a key argument is the problem's regularity, which we will use to conduct both a priori and a posteriori error estimation with the aid of the compact operators theory \cite{BO}.
Finally, we stress that our locking-free method is very competitive in terms of computational cost.

The outline of our manuscript is the following: In Section \ref{sec:model} we present the model problem, describing the parameters involved on the elasticity eigensystem and their relation with the Stokes eigenvalue problem through the Herrmann's pressure. We outline the scaling of the problem and the respective variational formulation which we analyze under a suitable weighted norm. Regularity results are presented together with the parameter dependent solution operator.  Also, we report an analysis of the relation between the spectrums of the   elasticity and the Stokes eigenvalue problems when the Poisson ratio tends to $1/2$. Section \ref{sec:fem} is dedicated to the discretization of the eigenvalue problem, where we introduce the finite element spaces for our study, the corresponding discrete version of the nearly incompressible eigenvalue problem, and we derive convergence of the method, and   error estimates for the eigenvalues and eigenfunctions.  In Section \ref{sec:apost} we design an a posteriori error estimator which we prove under standard techniques it reliability and efficiency. Finally in Section \ref{sec:numerical-experiments} we report a series of numerical tests in order to study the performance of the method in two and three dimensions. Here the aims are two: first, to  confirm the results related to the a priori analysis, where we compute the spectrum and order of convergence for the approximation and secondly, to analyze the performance of the a posteriori estimator where the aim is to recover the optimal order of convergence when the configuration of the problem does not allow sufficiently smooth eigenfunctions, deteriorating the a priori convergence rates. 
\section{The model problem}
\label{sec:model}


In this section we present the eigenvalue problem of our interest.
Let $\Omega\subset\mathbb{R}^d$ with $d\in\{2,3\}$ an open and bounded
domain with Lipschitz boundary $\partial\Omega$. Let us assume that
the boundary is divided into two parts $\partial\Omega:=\Gamma_D\cup\Gamma_N$,
where $|\Gamma_D|>0$. In particular, we assume that the structure is fixed on $\Gamma_D$ and free of stress on $\Gamma_N$.

The eigenvalue problem of interest is given by:  Find $(\widehat{\kappa}, \bu)$ such that
\begin{equation}
	\label{eq:eigenvalue-problem}
	\begin{aligned}
		-\bdiv\left[2\widehat{\mu}\beps(\bu) + \widehat{\lambda}\tr(\beps(\bu))\mathbb{I}\right] &= \widehat{\kappa}\rho\bu &\text{ in } \Omega,\\
		\bu &= \boldsymbol{0} & \mbox{ on } \G_D,\\
		\bsig\bn  & =  {\boldsymbol{0} }& \mbox{ on } \G_N,
	\end{aligned}
\end{equation}
where $\sqrt{\widehat{\kappa}}$ is the natural frequency, $\bu$ is the displacement, $\rho$ is the material density,
the Lam\'e parameters $\widehat{\lambda}$ and $\widehat{\mu}$ are defined by
\begin{equation*}
	\widehat{\lambda}:=\frac{E(x)\nu}{(1+\nu)(1-2\nu)}\quad\text{and}\quad \widehat{\mu}:=\frac{E(x)}{2(1+\nu)},
\end{equation*}
and $\beps(\cdot)$ represents the strain tensor given by $\beps(\bv):=\frac{1}{2}(\nabla\bv+(\nabla\bv)^{\texttt{t}})$ where $\texttt{t}$ represents the transpose operator. On the other hand, regarding to the physical parameters,  $E(x)$ represents  the space-variable Young modulus and $\nu$ represents  the Poisson ratio which we assume as a constant.  Let us  assume the existence of two positive constants
$E_{\min}$ and $E_{\max}$ such that the Young's modulus is bounded in the sense 
$
E_{\min}\leq E(x)\leq E_{\max}.
$
Scaling the first equation of \eqref{eq:eigenvalue-problem} by $(1+\nu)$ we obtain the following eigenvalue problem: Find $(\kappa, \bu)$ such that
\begin{equation}
	\label{eq:eigenvalue-problem-scaled}
	\begin{aligned}
		-\bdiv\left[2\mu\beps(\bu) + \lambda\tr(\beps(\bu))\mathbb{I}\right] &= \kappa\rho\bu &\text{ in } \Omega,\\
		\bu &= \boldsymbol{0} & \mbox{ on } \G_D,\\
		\bsig\bn  & =  \boldsymbol{0} & \mbox{ on } \G_N,
	\end{aligned}
\end{equation}
where the scaled eigenvalue and the Lam\'e coefficients are given by
\begin{equation}
\label{eq:scaled}
\kappa:=(1+\nu)\widehat{\kappa},\qquad\lambda:=\frac{E(x)\nu}{1-2\nu},\qquad \mu:=\frac{E(x)}{2}.
\end{equation}
Note that for a given $E(x)$, the eigenvalue problem \eqref{eq:eigenvalue-problem-scaled} has the advantage of having $\mu$ independent of $\nu$. The incompressibility of the material is explicitly given only by the behavior of the constant $\nu$ through the Lamé parameter $\lambda$.

With the scaled parameters specified in \eqref{eq:scaled} we rewrite system \eqref{eq:eigenvalue-problem} as the following system:
We seek the eigenvalue $\kappa$, a vector field $\bu$, 
and a scalar field $p$ such that
\begin{align}\label{stokes-cont}
-\bdiv(2\mu\beps(\bu)) +\nabla p & =
 \kappa \rho  \bu & \mbox{ in } \O, \nonumber\\
\vdiv\bu +\frac{1}{\lambda}p& =  0 & \mbox{ in } \O, \nonumber  \\
\bu & = \boldsymbol{0} & \mbox{ on } \G_D,\\
\bsig\bn  & =  \boldsymbol{0} & \mbox{ on } \G_N. \nonumber
 \end{align}
We note from \eqref{eq:scaled} that if  the Poisson ratio approaches $1/2$,
then $\lambda \uparrow \infty$.
In such case, we recover the Stokes system.
This is an important matter in view of deriving robust stability and error bounds.   From now and on, and for simplicity,
in the analysis below we consider $\rho=1$.

\label{stab_section}
\subsection{Variational formulation and preliminary results}

The aim of this section is to introduce a variational formulation
for  system \eqref{stokes-cont}. From now and on the relation $\texttt{a} \lesssim \texttt{b}$ indicates that $\texttt{a} \leq C \texttt{b}$, with a positive constant $C$ which is independent of $\texttt{a}$, $\texttt{b}$. 

Since we are focused in the pressure and displacement, the resulting formulation is a saddle-point problem that can be studied according to the theory of Babu\v ska--Brezzi \cite{MR3097958}.

 Let us begin by  defining  the spaces $\H:=\left\{\bv\in \H^1(\Omega)^d\;:\; \bv =\boldsymbol{0} \text{ on }\Gamma_D\right\}$ and $\Q:=\L^2(\Omega)$ where we will seek the displacement and the pressure, respectively. Now, by testing
 system \eqref{stokes-cont} with adequate
functions and imposing the boundary conditions,
we end up with the following  saddle point variational formulation:
Find $(\kappa,(\bu,p))\in\R\times\H\times\Q$ with $(\bu,p)\neq (\boldsymbol{0},0)$ such that
\begin{align*}
2\int_{\O}\mu(x)\beps(\bu):\beps(\bv)\, dx-\int_{\O}p\vdiv\bv\, dx&=
\kappa\int_{\O}\bu\cdot\bv\, dx&
\quad\forall\bv\in\H,\\ 
-\int_{\O}q\vdiv\bu\, dx-\int_{\O}\frac{1}{\lambda(x)}pq\,dx&=\,0&\quad\forall q\in\Q.
\end{align*}
This variational problem can be rewritten as follows:
{\em Find $(\kappa,(\bu,p))\in\R\times\H\times\Q$ such that}
\begin{equation}\label{def:limit_system_eigen_complete}
	\left\{
	\begin{array}{rcll}
a(\bu,\bv)      +b(\bv,p)&=&\kappa d(\bu,\bv)&\forall\bv\in\H,\\
b(\bu,q)     - c(p,q)  & =&\;0 &\forall q\in\Q,
\end{array}
\right.
\end{equation}
where the bilinear forms
$a:\H\times\H\to\R$,
$b:\H\times\Q\to\R$,
$c:\Q\times\Q\to\R$,
and
$d:\H\times\H\to\R$
are defined by
\begin{align*}
a(\bu,\bv)&:=2\int_{\O}\mu(x)\beps(\bu):\beps(\bv)\, dx,\quad
b(\bv,q):=-\int_{\O}q\vdiv\bv\, dx,\\
d(\bu,\bv)&:=\int_{\O} \bu \cdot\bv\, dx,\quad  c(p,q):=\int_{\O} \frac{1}{\lambda(x)}p\, q\, dx,
\end{align*}
for all $\bu,\bv\in\H$, and $p,q\in\Q$.

To perform the analysis, we need a suitable
norm which in particular depends on the parameters of the problem.
With this in mind, and for all $(\bv,q)\in\H\times Q$,
we define the following weighted norm,
\begin{equation*}
	\label{eq:normHQ}
	\vertiii{(\bv,q)}_{\H\times Q}^2:=\Vert\mu(x)^{1/2}\nabla\bv\Vert_{0,\O}^2 + \Vert\mu(x)^{-1/2}q\Vert_{0,\O}^2 +\Vert\lambda(x)^{-1/2}q\Vert_{0,\O}^2.
\end{equation*}
Moreover, in what follows, we assume that $\mu(x),1/\lambda(x)\in \mathrm{W}^{1,\infty}(\Omega)$.

Let us introduce and define  the kernel of $b(\cdot,\cdot)$ by 
\begin{equation*}
\X := \{ \bv \in \H\,:\, b(\bv,q)=0,\quad \forall \, q\in Q\}\,=\,\{ \bv \in \H\,:\, \vdiv \bv \,=\,0\,\,\,\, {\rm in}\,\, \Omega\},
\end{equation*}
and let us recall that the bilinear form $b(\cdot,\cdot)$ satisfies the inf-sup condition:
\begin{equation}\label{eq:inf-sup-b2}
\sup_{\stackrel{\scriptstyle\bv\in\H}{\bv\ne0}}\frac{\vert
  b(\bv,q)\vert}{\Vert\mu(x)^{1/2}\nabla\bv\Vert_{0,\O}}\ge \beta_2
  \Vert\mu(x)^{-1/2}q\Vert_{0,\O} \quad\forall q\in\Q,
\end{equation}
with an inf-sup constant $\beta_2>0$ only depending on $\Omega$; see \cite{MR3973678}, for instance. 

Now, we are in a position to introduce the solution operator which we denote by
$\bT_{\nu}: \L^2(\Omega)^d\rightarrow \L^2(\Omega)^d$ and is such that $\boldsymbol{f}\mapsto \bT_{\nu} \boldsymbol{f}:=\widehat{\bu}$, where the pair $(\widehat{\bu},\widehat{p})\in \H\times \Q$ is the solution of the following well posed source problem: Given $\boldsymbol{f}\in \L^2(\O)^d$ find $(\widehat{\bu},\widehat{p})\in \H\times \Q$ such that
\begin{equation}\label{def:system_source_complete}
	\left\{
	\begin{array}{rcll}
a(\widehat{\bu},\bv)      +b(\bv,\widehat{p})&=&d(\boldsymbol{f} ,\bv)&\forall\bv\in\H,\\
b(\widehat{\bu},q)     - c(\widehat{p},q)  & =&\;0 &\forall q\in\Q.
\end{array}
\right.
\end{equation}
Thanks to the  Babu\v{s}ka-Brezzi theory
we have that $\bT_{\nu}$ is well defined and the following estimate holds
\begin{equation*}\label{eq:strong-norm-bound-continuous}
\Vert\bT_{\nu}\boldsymbol{f}\Vert_{0,\O}\le\vertiii{ (\bT_{\nu}\boldsymbol{f},\widehat{p})} \lesssim\|\boldsymbol{f}\|_{0,\Omega},
\end{equation*}
with a constant depending on the inverse of $\mu$ and the Poincaré constant.
It is easy to check that $\bT_{\nu}$ is a selfadjoint
operator with respect to the $\L^2$ inner product. Also, let $\chi$ be a real number such that  $\chi\neq 0$. Notice that $(\chi,\boldsymbol{u})\in \mathbb{R}\times\H$ is an eigenpair of $\bT_\nu$ if and only if  there exists $p\in \Q$ such that,  $(\kappa,(\boldsymbol{u}, p))$ solves problem \eqref{def:limit_system_eigen_complete}  with $\chi:=1/\kappa$.

The following result states an additional regularity
result for problem \eqref{def:system_source_complete}.
\begin{lemma}
\label{rmrk:additional}
Let $(\widehat{\bu},\widehat{p})$ be the unique solution
of \eqref{def:system_source_complete}, then there exists
$s\in(0,1]$ such that $(\widehat{\bu},\widehat{p})\in \H^{1+s}(\O)^{d}\times \H^{s}(\O)$
and the following estimate holds (see for instance \cite{grisvard1986problemes} or  \cite{MR4430561})
\begin{equation*}
\|\widehat{\bu}\|_{1+s,\O}+\|\widehat{p}\|_{s,\O}\lesssim \|\boldsymbol{f}\|_{0,\O},
\end{equation*}
where the hidden constant depends on $\Omega$ and the Lam\'e coefficients.
\end{lemma}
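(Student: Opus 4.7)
The plan is to reduce \eqref{def:system_source_complete} to a variable-coefficient elasticity equation for $\widehat{\bu}$ alone, apply a shift theorem for such elasticity systems with mixed boundary data on Lipschitz domains, and recover the regularity of $\widehat{p}$ by substitution.

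First, the well-posedness via Babu\v{s}ka--Brezzi already delivers $(\widehat{\bu},\widehat{p})\in\H\times\Q$ with $\|\widehat{\bu}\|_{1,\Omega}+\|\widehat{p}\|_{0,\Omega}\lesssim\|\boldsymbol{f}\|_{0,\Omega}$. Next I would use the second equation of \eqref{def:system_source_complete}, tested against arbitrary $q\in\Q$, to identify pointwise $\widehat{p}=-\lambda(x)\,\vdiv\widehat{\bu}$. Substituting into the strong form of the first equation yields the pure-displacement elasticity problem
\begin{equation*}
-\bdiv\!\bigl(2\mu(x)\beps(\widehat{\bu})\bigr)-\nabla\!\bigl(\lambda(x)\,\vdiv\widehat{\bu}\bigr)=\boldsymbol{f}\quad\text{in }\Omega,
\end{equation*}
complemented with $\widehat{\bu}=\boldsymbol{0}$ on $\Gamma_D$ and the natural traction-free condition on $\Gamma_N$.

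The second step is to invoke the shift theorem for this mixed-boundary elasticity system. Because $\mu,\lambda\in\mathrm{W}^{1,\infty}(\Omega)$ and because the problem is coercive (with constants depending on $\lambda$), one may argue either by a perturbation/freezing-of-coefficients reduction to the constant-coefficient Lamé operator, or by directly quoting the Grisvard-type shift estimates available in \cite{grisvard1986problemes,MR4430561}. In either case, there exists $s\in(0,1]$, determined by the opening angles at $\overline{\Gamma_D}\cap\overline{\Gamma_N}$ and by reentrant corners of $\partial\Omega$, such that
\begin{equation*}
\|\widehat{\bu}\|_{1+s,\Omega}\lesssim \|\boldsymbol{f}\|_{0,\Omega}+\|\widehat{\bu}\|_{1,\Omega}\lesssim\|\boldsymbol{f}\|_{0,\Omega},
\end{equation*}
the last step using the a priori stability bound. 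For the pressure estimate, the identity $\widehat{p}=-\lambda(x)\vdiv\widehat{\bu}$ together with the fact that multiplication by a $\mathrm{W}^{1,\infty}$ function is continuous on $\H^{s}(\Omega)$ for $s\in(0,1]$ gives $\|\widehat{p}\|_{s,\Omega}\lesssim \|\lambda\|_{1,\infty,\Omega}\|\vdiv\widehat{\bu}\|_{s,\Omega}\lesssim\|\boldsymbol{f}\|_{0,\Omega}$, completing the estimate.

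The main obstacle is the shift step. The mixed Dirichlet/Neumann boundary conditions on a merely Lipschitz domain prevent full $H^{2}\times H^{1}$ regularity; the regularity pickup $s$ is dictated by the geometry at the transition curve/points between $\Gamma_D$ and $\Gamma_N$ and at any reentrant corners, which is why the conclusion is stated only for some $s\in(0,1]$. This is precisely the content covered by the cited references, so in the write-up I would simply check that their hypotheses (Lipschitz domain, $W^{1,\infty}$ coefficients, $L^2$ data, mixed boundary data of the form above) are met and apply them as a black box, absorbing the dependence on the Lamé functions into the hidden constant.
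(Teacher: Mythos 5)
Your proposal is correct for the lemma as literally stated, but it takes a genuinely different route from the paper, and the difference matters downstream. The paper gives no reduction at all: it treats \eqref{def:system_source_complete} as a Stokes-type saddle point problem with the penalty term $c(\cdot,\cdot)$ and quotes the shift/regularity theory for that coupled system directly from \cite{grisvard1986problemes,MR4430561}, keeping $\widehat{p}$ as an independent unknown. You instead eliminate the pressure via $\widehat{p}=-\lambda(x)\vdiv\widehat{\bu}$ and invoke regularity for the pure-displacement Lam\'e system. The algebra of that reduction is fine (the second equation does give $\vdiv\widehat{\bu}+\lambda^{-1}\widehat{p}=0$ a.e., and the natural condition on $\Gamma_N$ becomes the usual traction condition), and since the statement allows the hidden constant to depend on the Lam\'e coefficients, your bounds $\|\widehat{\bu}\|_{1+s,\Omega}\lesssim\|\boldsymbol{f}\|_{0,\Omega}$ and $\|\widehat{p}\|_{s,\Omega}\lesssim\|\lambda\|_{1,\infty,\Omega}\|\vdiv\widehat{\bu}\|_{s,\Omega}$ do prove the lemma as written. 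What the paper's route buys, and yours gives up, is uniformity in $\lambda$: immediately after the lemma the paper remarks (citing \cite{MR3962898}) that the estimate persists as $\lambda\uparrow\infty$, and this $\nu$-robustness is what is actually used later (e.g.\ in Lemma~\ref{lmm:conv_norm} and Theorem~\ref{gap}, whose constants are claimed independent of $\nu$). In your argument both the displacement shift constant for the nearly incompressible Lam\'e operator and, more visibly, the factor $\|\lambda\|_{1,\infty,\Omega}$ in the pressure bound degenerate as $\nu\to 1/2$, so if you want your proof to support the paper's subsequent claims you would need to replace the elimination step by a regularity result for the mixed (Herrmann-pressure) formulation itself, which is precisely what the cited references provide.
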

Let us remark that Lemma~\ref{rmrk:additional} holds true
even for $\lambda\uparrow\infty$ (Further comments on this fact can be seen in \cite{MR3962898}).

As a consequence of this result, we conclude compactness of  $\bT_{\nu}$.
In fact, since $\H^{1+s}(\Omega)^d\hookrightarrow \L^2(\Omega)^d$
we have that $\bT_{\nu}$ is a compact operator. Hence, we are in position to establish the spectral characterization of $\bT_{\nu}$.
\begin{theorem}
The spectrum of $\bT_{\nu}$  satisfies $\sp(\bT_{\nu})=\{0\}\cup\{\chi_k\}_{k\in\mathbb{N}}$, where $\{\chi_k\}_{k\in\mathbb{N}}$ is a sequence of positive eigenvalues.
\end{theorem}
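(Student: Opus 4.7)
The plan is to combine two standard ingredients: the spectral theorem for compact self-adjoint operators on a Hilbert space, together with an energy identity for the source problem that forces each non-zero eigenvalue to be strictly positive.

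First I would collect the facts already at hand. The compactness of $\bT_\nu$ on $\L^2(\O)^d$ is a direct consequence of Lemma~\ref{rmrk:additional} together with the compact embedding $\H^{1+s}(\O)^d \hookrightarrow \L^2(\O)^d$, and self-adjointness of $\bT_\nu$ with respect to the $\L^2$ inner product has already been recorded above the statement. Applying the classical spectral theorem for compact self-adjoint operators on the infinite-dimensional Hilbert space $\L^2(\O)^d$ then yields at once $\sp(\bT_\nu) = \{0\} \cup \{\chi_k\}_{k\in\N}$, with $\{\chi_k\}$ a real sequence either finite or accumulating only at $0$, and $0 \in \sp(\bT_\nu)$ from the infinite-dimensional setting. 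So the only substantive step is to check that each non-zero element of the sequence is positive.

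For the sign, I would fix an eigenpair $(\chi,\bu)$ with $\chi \neq 0$, set $\widehat{\bu} := \bT_\nu \bu = \chi\bu$, and let $\widehat{p}$ be the associated pressure from \eqref{def:system_source_complete} with datum $\boldsymbol{f} = \bu$. Testing the first equation of \eqref{def:system_source_complete} with $\bv = \widehat{\bu}$ and the second with $q = \widehat{p}$ and subtracting yields
\[
a(\widehat{\bu},\widehat{\bu}) + c(\widehat{p},\widehat{p}) \,=\, d(\bu,\widehat{\bu}) \,=\, \chi\,\|\bu\|_{0,\O}^2.
\]
Since $a$ and $c$ are both non-negative and $\|\bu\|_{0,\O} > 0$, this forces $\chi \geq 0$. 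Strict positivity follows by excluding $\chi = 0$: in that case $\widehat{\bu} = \boldsymbol{0}$ and $c(\widehat{p},\widehat{p}) = 0$, so the positivity of $1/\lambda(x)$ gives $\widehat{p} = 0$, and substituting into the first equation of \eqref{def:system_source_complete} gives $d(\bu,\bv) = 0$ for every $\bv \in \H$, whence $\bu = \boldsymbol{0}$, contradicting that $\bu$ is an eigenfunction.

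I do not expect a serious obstacle here: the spectral theorem supplies the structure of $\sp(\bT_\nu)$, and the energy identity is routine. The only subtlety worth flagging is that the final sign argument uses the positive definiteness of $c$ through $\lambda(x) < \infty$, which is automatic for any fixed Poisson ratio $\nu < 1/2$; the behavior as $\lambda \uparrow \infty$ is precisely the Stokes-limit issue that the paper intends to address separately.
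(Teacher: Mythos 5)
Your proposal is correct and follows essentially the route the paper intends: the theorem is stated as a direct consequence of the compactness (via Lemma~\ref{rmrk:additional} and the embedding $\H^{1+s}(\O)^d\hookrightarrow\L^2(\O)^d$) and $\L^2$-self-adjointness of $\bT_\nu$, combined with the spectral theorem for compact self-adjoint operators. Your energy identity $a(\widehat{\bu},\widehat{\bu})+c(\widehat{p},\widehat{p})=\chi\|\bu\|_{0,\O}^2$ and the injectivity argument excluding $0$ as an eigenvalue are exactly the standard details the paper leaves implicit (and the injectivity in fact also justifies that the eigenvalue sequence is infinite), so there is no gap.
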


On the other hand, from  \cite{MR0975121} we have the following regularity result  for the eigenfunctions of the eigenproblem \eqref{def:limit_system_eigen_complete}.
\begin{theorem}
	\label{th:reg_velocity}
	If $(\kappa, (\bu,p))\in \mathbb{R}\times\H\times\Q$ solves \eqref{def:limit_system_eigen_complete}, then, there exists $r>0$ such that $\bu\in\H^{1+r}(\Omega)^d$ and $p\in \H^r(\Omega)$. Moreover, the following estimate  holds
	\begin{equation*}
		\|\bu\|_{1+r,\O}+\|p\|_{r,\O}\lesssim \|\bu\|_{0,\O}.
	\end{equation*}
\end{theorem}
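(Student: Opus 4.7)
The plan is to reduce the regularity of an eigenfunction pair to the regularity of a source problem already handled in Lemma~\ref{rmrk:additional}. The key observation is that every eigenpair of \eqref{def:limit_system_eigen_complete} is, simultaneously, the solution of an instance of \eqref{def:system_source_complete} whose right-hand side is built from the eigenfunction itself. Once this identification is made, the desired regularity and the estimate drop out immediately.

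More precisely, given $(\kappa,(\bu,p))\in\mathbb{R}\times\H\times\Q$ solving \eqref{def:limit_system_eigen_complete}, I would set $\ff:=\kappa\bu$. Since $\bu\in\H\hookrightarrow \L^2(\Omega)^d$, we have $\ff\in\L^2(\Omega)^d$. A direct comparison of the bilinear forms shows that $(\bu,p)$ satisfies exactly the variational system \eqref{def:system_source_complete} with data $\ff$; by the Babu\v ska--Brezzi well-posedness recalled before Lemma~\ref{rmrk:additional}, the source problem has a unique solution, so $(\bu,p)=(\widehat\bu,\widehat p)$.

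Next I would apply Lemma~\ref{rmrk:additional} to this identification. It provides an exponent $s\in(0,1]$ (depending on $\Omega$, $\mu$ and $\lambda$) such that $\bu\in\H^{1+s}(\Omega)^d$, $p\in\H^{s}(\Omega)$, together with
\begin{equation*}
\|\bu\|_{1+s,\Omega}+\|p\|_{s,\Omega}\lesssim \|\ff\|_{0,\Omega}=|\kappa|\,\|\bu\|_{0,\Omega}.
\end{equation*}
Setting $r:=s$ and absorbing $|\kappa|$ (a fixed scalar for the given eigenpair) into the hidden constant yields the announced estimate $\|\bu\|_{1+r,\Omega}+\|p\|_{r,\Omega}\lesssim \|\bu\|_{0,\Omega}$.

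There is no serious obstacle in the argument itself; the only subtlety worth flagging is that the hidden constant depends on $|\kappa|$, which is harmless for a fixed eigenpair but would grow along the discrete spectrum if one tried to use the bound uniformly. Likewise, the shift exponent $r$ inherits all restrictions coming from the geometry of $\Omega$, the mixed boundary conditions on $\Gamma_D\cup\Gamma_N$, and the assumed regularity $\mu,1/\lambda\in \mathrm{W}^{1,\infty}(\Omega)$ of the coefficients, precisely as in Lemma~\ref{rmrk:additional} and in the classical reference \cite{MR0975121}.
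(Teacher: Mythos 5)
Your argument is correct. The paper itself gives no proof of this theorem: it simply quotes the regularity result from \cite{MR0975121} for the eigenfunctions of \eqref{def:limit_system_eigen_complete}. What you do instead is supply the standard self-contained bootstrap: an eigenpair $(\kappa,(\bu,p))$ solves the source problem \eqref{def:system_source_complete} with datum $\ff:=\kappa\bu\in\L^2(\Omega)^d$, and uniqueness of that problem (Babu\v ska--Brezzi) identifies $(\bu,p)=(\widehat{\bu},\widehat{p})$, so Lemma~\ref{rmrk:additional} yields $\bu\in\H^{1+s}(\Omega)^d$, $p\in\H^{s}(\Omega)$ and $\|\bu\|_{1+s,\O}+\|p\|_{s,\O}\lesssim|\kappa|\,\|\bu\|_{0,\O}$, which gives the claim with $r:=s$. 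This is a perfectly legitimate route and arguably what underlies the cited reference anyway; what it buys is independence from \cite{MR0975121}, at the price of two small concessions you correctly flag: the hidden constant now carries the factor $|\kappa|$ (harmless for a fixed eigenvalue, and in the paper's later use in Theorem~\ref{gapr} only eigenfunctions of a fixed $\chi$ are involved, with $\kappa$ bounded as $\nu\to\tfrac12$ by Lemma~\ref{fghtrb}), and your exponent is pinned to the source-problem exponent $s$, whereas the $r$ quoted from \cite{MR0975121} could in principle be larger; since the statement only asserts the existence of some $r>0$, taking $r=s$ is sufficient.
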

Now,
Let us end this section introducing the bilinear form
$\mathcal{A}:(\H\times \Q)\times(\H\times\Q)\rightarrow\mathbb{R}$ defined by
\begin{equation}
\label{eq:form_A}
\mathcal{A}((\bu,p),(\bv,q)):=a(\bu,\bv)+b(\bv,p)+b(\bu,q)+ c(p,q),
\end{equation}
which allows us to rewrite the eigenvalue problem as follows. Find $\kappa\in\mathbb{R}$ and $(\boldsymbol{0},0)\neq (\bu,p)\in\H\times\Q$ such that
\begin{equation*}
\label{eq:eigen_A}
\mathcal{A}((\bu,p),(\bv,q))=\kappa d(\bu,\bv)\quad\forall (\bv,q)\in\H\times Q.
\end{equation*}
Let us remark that $\mathcal{A}(\cdot,\cdot)$ is a bounded bilinear form, in the sense that there exists a positive constant $\widehat{C}:=\max\{2,\mu_{min}^{-1/2}\mu_{max}^{1/2}\}$
such that
\begin{equation*}
\mathcal{A}((\bu,p),(\bv,q))\leq\widehat{C}\vertiii{(\bu,p)}_{\H\times Q}\vertiii{(\bv,q)}_{\H\times Q}\quad\forall(\bu,p),(\bv,q)\in\H\times\Q.
\end{equation*}
Next we state the stability result for the bilinear form $\mathcal{A}$.
\begin{lemma}\label{lem:stab-A}
For any $(\bu,p)\in\H\times\Q$, there exists $(\bv,q)\in\H\times\Q$ with $\vertiii{(\bv,q)}\lesssim \vertiii{(\bu,p)}$ such that
\begin{align*}
\vertiii{(\bu,p)}^2\lesssim \mathcal{A}((\bu,p),(\bv,q)).
\end{align*}
\end{lemma}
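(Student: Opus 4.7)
The plan is to establish the generalised inf-sup condition for $\mathcal{A}$ by an explicit construction of a test pair, combining three standard ingredients: Korn's first inequality on $\H$ (valid since $|\G_D|>0$) together with the uniform bounds on $\mu$, which control the $\H$-part of the norm through $a(\bu,\bu)$; the inf-sup condition \eqref{eq:inf-sup-b2} for $b$, which is the only mechanism available to recover $\|\mu^{-1/2}p\|_{0,\O}$; and the positivity $c(p,p)=\|\lambda^{-1/2}p\|_{0,\O}^2$, which directly delivers the $\lambda$-weighted part.

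Given $(\bu,p)\in\H\times\Q$, the first step is to apply \eqref{eq:inf-sup-b2} to produce $\bw\in\H$ with, after rescaling and a convenient sign choice,
\[
 b(\bw,p)\;\geq\;\beta_2\|\mu^{-1/2}p\|_{0,\O}^2, \qquad \|\mu^{1/2}\nabla\bw\|_{0,\O}\;\leq\;\|\mu^{-1/2}p\|_{0,\O}.
\]
The test pair is then taken as $(\bv,q):=(\bu+\delta\bw,\,\pm p)$, with the sign of $q$ chosen so that upon expanding $\mathcal{A}((\bu,p),(\bv,q))$ the two copies of $b(\bu,p)$ cancel and the diagonal $c$-contribution becomes $+\|\lambda^{-1/2}p\|_{0,\O}^2$; the parameter $\delta>0$ is fixed below. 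What remains is $a(\bu,\bu) + \delta\,a(\bu,\bw) + \delta\,b(\bw,p) + \|\lambda^{-1/2}p\|_{0,\O}^2$. Korn's inequality together with $\mu_{\min}\leq\mu(x)\leq\mu_{\max}$ yields $a(\bu,\bu)\geq\alpha_K\|\mu^{1/2}\nabla\bu\|_{0,\O}^2$, while the cross term $\delta\,a(\bu,\bw)$ is bounded by Cauchy--Schwarz and the control $\|\mu^{1/2}\nabla\bw\|_{0,\O}\leq\|\mu^{-1/2}p\|_{0,\O}$, then split by Young's inequality with parameter $\eta>0$ into a fraction of $\|\mu^{1/2}\nabla\bu\|_{0,\O}^2$ plus a multiple $\delta^2\eta^{-1}\|\mu^{-1/2}p\|_{0,\O}^2$. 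Choosing $\eta$ to absorb half the Korn term and then $\delta$ small enough that $\delta\beta_2-\delta^2/\eta\geq\tfrac{1}{2}\delta\beta_2$ produces the desired lower bound $\mathcal{A}((\bu,p),(\bv,q))\gtrsim\vertiii{(\bu,p)}^2$. The complementary estimate $\vertiii{(\bv,q)}\lesssim\vertiii{(\bu,p)}$ follows from the triangle inequality and the bound on $\|\mu^{1/2}\nabla\bw\|_{0,\O}$.

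The main obstacle is quantitative rather than conceptual: one must verify that $\alpha_K$, $\beta_2$, $\eta$, and $\delta$ can all be chosen so that every hidden constant depends only on $\O$, $\G_D$, $\mu_{\min}$ and $\mu_{\max}$, and in particular is uniform in $\lambda$ as $\lambda\uparrow\infty$. This $\lambda$-independence is the whole point of the weighted norm $\vertiii{\cdot}$ and is precisely what will subsequently enable the locking-free spectral error analysis.
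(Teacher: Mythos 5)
Your construction is exactly the paper's argument: the same test pair $(\bu+\delta\tilde{\bv},-p)$ with $\tilde{\bv}$ supplied by the inf-sup condition \eqref{eq:inf-sup-b2}, the same Cauchy--Schwarz/Young splitting of the cross term $a(\bu,\tilde{\bv})$, and the same smallness choice of $\delta$, followed by the triangle-inequality bound on $\vertiii{(\bv,q)}$. The only difference is that you make explicit the Korn inequality and the $\mu_{\min},\mu_{\max}$ bounds needed to pass from $\|\mu^{1/2}\beps(\bu)\|_{0,\O}$ to $\|\mu^{1/2}\nabla\bu\|_{0,\O}$, a step the paper leaves implicit; this is a correct and welcome clarification, not a different route.
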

\begin{proof}
Recalling the definition of the bilinear form $\mathcal{A}(\cdot,\cdot)$, we find
\begin{align*}
\mathcal{A}((\bu,p),(\bu,-p))=\|(2\mu)^{1/2} \beps(\bu)\|^2_{0,\Omega}+\|\lambda^{-1/2} p\|_{0,\Omega}^2.
\end{align*}
Thanks to inf-sup condition \eqref{eq:inf-sup-b2}, for any $p\in \Q$, we find $\tilde{\bv}\in\H$ with $\|(2\mu)^{1/2}\beps(\tilde{\bv})\|_{0,\Omega}\le C_1\|(2\mu)^{-1/2}p\|_{0,\Omega}$ such that $C_2\|(2\mu)^{-1/2}p\|_{0,\Omega}^2\le b(\tilde{\bv},p)$.
This allows us to conclude  that
\begin{multline*}
\mathcal{A}((\bu,p),(\tilde{\bv},0))= a(\bu,\tilde{\bv})+b(\tilde{\bv},p)
\ge C_2\|(2\mu)^{-1/2}p\|_{0,\Omega}^2-C_1\|(2\mu)^{1/2}\beps(\bu)\|_{0,\Omega}\|(2\mu)^{-1/2}p||_{0,\Omega}\\
\ge (C_2-1/2\epsilon)\|(2\mu)^{-1/2}p||_{0,\Omega}^2-(C_1^2\epsilon/2) \|(2\mu)^{1/2}\beps(\bu)||_{0,\Omega}.
\end{multline*}
Making the specific choices $\bv:=\bu+\delta \tilde{\bv}$ and $q=-p$, we have
\begin{align*}
\mathcal{A}((\bu,p),(\bv,q))&=\mathcal{A}((\bu,p),(\bu,-p))+\delta\mathcal{A}((\bu,p),(\tilde{\bv},0))\\
&\ge \|(2\mu)^{1/2} \beps(\bu)\|^2_{0,\Omega}+\|\lambda^{-1/2} p\|_{0,\Omega}^2\\
&\quad+\delta((C_2-1/2\epsilon)\|(2\mu)^{-1/2}p\|_{0,\Omega}^2-(C_1^2\epsilon/2) \|(2\mu)^{1/2}\beps(\bu)\|_{0,\Omega})
\end{align*} 
Choosing $\epsilon=1/C_2$ and $\delta=C_2/C_1^2$ gives $\mathcal{A}((\bu,p),(\bv,q))\gtrsim \vertiii{(\bu,p)}^2$. 
Additionally, we have
\begin{align*}
\vertiii{(\bv,q)}^2=\vertiii{(\bu+\delta\tilde{\bv},-p)}^2\lesssim \vertiii{(\bu,p)}^2.
\end{align*}
The proof is complete.
\end{proof}
\subsection{The limit problem}
The solution operator $\bT_{\nu}$ depends strongly on the values of the Poisson ratio. This must be understood in the sense that  there exists a family of problems depending on $\nu$ that $\bT_{\nu}$ must solve. As we observe on the definition of the bilinear form $c(\cdot,\cdot)$, when $\lambda$ blows up, i.e. $\lambda\uparrow\infty$, then $c(\cdot,\cdot)\downarrow 0$ and hence, we are in presence of a Stokes-type of problem. So the natural question is what happens with the spectrum of $\bT_{\nu}$ when $\lambda\uparrow\infty$. Let us begin by  introducing the following eigenvalue problem: Find $(\kappa_0,(\bu_0,p_0))\in\mathbb{R}\times\H\times\Q$ such that
\begin{equation}\label{def:limit_system_eigen}
	\left\{
	\begin{array}{rcll}
a(\bu_0,\bv)      +\;b(\bv,p_0)&=&\;\kappa_0 d(\bu_0,\bv)&\forall\bv\in\H,\\
b(\bu_0,q)    & =&0 &\forall q\in\Q.
\end{array}
\right.
\end{equation}
For this eigenvalue problem, let us introduce the following solution operator $\bT_0:\L^2(\Omega)^d\rightarrow\L^2(\Omega)^d$ defined by $\boldsymbol{f}\mapsto \bT_0\boldsymbol{f}:=\widehat{\bu}_0$, where the pair  $(\widehat{\bu}_0,\widehat{p}_0)$ solves the following source problem: Find $(\widehat{\bu}_0,\widehat{p}_0)\in\H\times\Q$ such that
\begin{equation}\label{def:limit_system_source}
	\left\{
	\begin{array}{rcll}
a(\widehat{\bu}_0,\bv)     +b(\bv,\widehat{p}_0)&=&d(\boldsymbol{f} ,\bv)&\forall\bv\in\H,\\
b(\widehat{\bu}_0,q)     &=& 0& \forall q\in\Q.
\end{array}
\right.
\end{equation}
This operator $\bT_0$ is well defined, self-adjoint, and compact
and its spectral characterization is given as follows.
\begin{lemma}
The spectrum of $\bT_{0}$  satisfies $\sp(\bT_{0})=\{0\}\cup\{\chi_k^0\}_{k\in\mathbb{N}}$,
where $\{\chi_k^0\}_{k\in\mathbb{N}}$ is a sequence of
positive eigenvalues such that $\chi_k^0\downarrow 0$ as $k\uparrow+\infty$.
\end{lemma}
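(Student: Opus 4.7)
The plan is to mimic the argument already sketched for $\bT_\nu$, since the structure of the problems \eqref{def:limit_system_source} and \eqref{def:system_source_complete} is essentially the same (the bilinear form $c(\cdot,\cdot)$ simply vanishes in the limit), and then combine the standard spectral theorem for compact self-adjoint operators on a Hilbert space with a positivity argument.

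First I would verify that $\bT_0$ is well defined. The bilinear form $a(\cdot,\cdot)$ is symmetric, continuous, and elliptic on the kernel $\X$ thanks to Korn's inequality (recall $|\G_D|>0$), while $b(\cdot,\cdot)$ satisfies the inf-sup condition \eqref{eq:inf-sup-b2}. Hence the Babu\v ska--Brezzi theory yields existence, uniqueness, and continuous dependence of $(\widehat\bu_0,\widehat p_0)$ on $\boldsymbol{f}$ in the weighted norm $\vertiii{\cdot}_{\H\times Q}$, so $\bT_0$ is a bounded linear operator on $\L^2(\Omega)^d$. Next I would argue self-adjointness: given $\boldsymbol{f},\bg\in\L^2(\Omega)^d$ with images $\widehat\bu_0=\bT_0\boldsymbol{f}$, $\widehat\bw_0=\bT_0\bg$ and associated pressures $\widehat p_0,\widehat r_0$, testing the first equation of \eqref{def:limit_system_source} for $\boldsymbol{f}$ with $\bv=\widehat\bw_0$ and the one for $\bg$ with $\bv=\widehat\bu_0$ and using $\vdiv\widehat\bu_0=\vdiv\widehat\bw_0=0$ together with the symmetry of $a(\cdot,\cdot)$ gives
\begin{equation*}
d(\boldsymbol{f},\bT_0\bg)=a(\widehat\bu_0,\widehat\bw_0)=a(\widehat\bw_0,\widehat\bu_0)=d(\bg,\bT_0\boldsymbol{f}),
\end{equation*}
so $\bT_0$ is self-adjoint with respect to the $\L^2$ inner product.

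Compactness follows from the regularity lemma. As stressed in the remark after Lemma~\ref{rmrk:additional}, the estimate $\|\widehat\bu_0\|_{1+s,\O}+\|\widehat p_0\|_{s,\O}\lesssim\|\boldsymbol{f}\|_{0,\O}$ persists in the limit $\lambda\uparrow\infty$, so $\bT_0$ maps $\L^2(\Omega)^d$ boundedly into $\H^{1+s}(\Omega)^d$, and the compact embedding $\H^{1+s}(\Omega)^d\hookrightarrow \L^2(\Omega)^d$ yields compactness of $\bT_0$. The spectral theorem for compact self-adjoint operators on the infinite-dimensional Hilbert space $\L^2(\Omega)^d$ then produces $\sp(\bT_0)=\{0\}\cup\{\chi_k^0\}_{k\in\mathbb N}$ with $\chi_k^0\downarrow 0$; here $0$ belongs to the spectrum because $\L^2(\Omega)^d$ is infinite-dimensional while the range of $\bT_0$ is contained in $\X\subset\H^{1+s}(\Omega)^d$, hence $\bT_0$ cannot be invertible.

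It remains to show that the nonzero eigenvalues are strictly positive. If $\bT_0\bu=\chi\bu$ with $\chi\neq 0$, then $\bu\in\X$ and, by the correspondence noted earlier, $(1/\chi,(\bu,p))$ solves \eqref{def:limit_system_eigen} for some $p\in\Q$. Choosing $\bv=\bu$ in the first equation and using $b(\bu,p)=0$ gives $a(\bu,\bu)=\kappa_0\,d(\bu,\bu)$ with $\kappa_0=1/\chi$; since $a(\bu,\bu)>0$ (by Korn's inequality) and $d(\bu,\bu)>0$, we conclude $\chi>0$. The only step that requires a minor comment is the fact that $0$ indeed lies in the spectrum and that the eigenvalues accumulate only at $0$, which is the standard consequence of compactness; nothing delicate arises because we can invoke the regularity result uniformly in $\lambda$.
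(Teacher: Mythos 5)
Your proposal is correct and follows exactly the route the paper implicitly relies on: the paper states this lemma without proof, as the standard consequence of $\bT_0$ being well defined (Babu\v ska--Brezzi), self-adjoint, and compact (regularity uniform in $\lambda$ plus the compact embedding $\H^{1+s}(\Omega)^d\hookrightarrow\L^2(\Omega)^d$), combined with the spectral theorem and the positivity of the Rayleigh quotient $a(\bu,\bu)/d(\bu,\bu)$ via Korn's inequality. Your write-up simply makes these standard steps explicit, so it matches the paper's approach.
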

For the operators $\bT_{\nu}$ and $\bT_0$, we have the following result.
\begin{lemma}\label{fghtyu}
The following estimate holds
\begin{equation}\label{fghtyu1}
\Vert(\bT_{\nu}-\bT_0)\boldsymbol{f} \Vert_{0,\O}\le\vertiii{(\bT_{\nu}-\bT_0)\boldsymbol{f}}\lesssim {\|\lambda^{-1/2}\widehat{p}_0\|_{0,\O}\lesssim (2(1-2\nu)/\nu)^{1/2}\|\boldsymbol{f}\|_{0,\Omega}},\quad \forall\boldsymbol{f}\in \L^2(\Omega)^d,
\end{equation}
where the hidden constants are  independent of the Poisson ratio.
\end{lemma}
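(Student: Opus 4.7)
My plan is to view the difference pair $(\widehat{\bu}-\widehat{\bu}_0,\widehat{p}-\widehat{p}_0)$ as the unique solution of a perturbed saddle-point problem of the same form as \eqref{def:system_source_complete} but driven only by the incompressibility defect of the Stokes limit, and then to invoke the uniform well-posedness of that system, which is essentially a reformulation of Lemma~\ref{lem:stab-A}.

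Concretely, I would first subtract \eqref{def:limit_system_source} from \eqref{def:system_source_complete} (both driven by the same source $\ff$); after adding and subtracting $c(\widehat{p}_0,q)$ in the second line, the residual system becomes
\begin{align*}
a(\widehat{\bu}-\widehat{\bu}_0,\bv) + b(\bv,\widehat{p}-\widehat{p}_0) &= 0 \qquad \forall\bv\in\H,\\
b(\widehat{\bu}-\widehat{\bu}_0,q) - c(\widehat{p}-\widehat{p}_0,q) &= c(\widehat{p}_0,q) \qquad \forall q\in\Q.
\end{align*}
This is once again a problem of the type \eqref{def:system_source_complete}, whose only data is the linear functional $q\mapsto c(\widehat{p}_0,q)$. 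Applying Lemma~\ref{lem:stab-A} to $(\widehat{\bu}-\widehat{\bu}_0,\widehat{p}-\widehat{p}_0)$, taking the corresponding test pair $(\bv,q)$, and using Cauchy--Schwarz together with the elementary bound $\|\lambda^{-1/2}q\|_{0,\O}\le\vertiii{(\bv,q)}$ yields, after cancellation of one power of the triple norm, the middle inequality $\vertiii{(\widehat{\bu}-\widehat{\bu}_0,\widehat{p}-\widehat{p}_0)}\lesssim \|\lambda^{-1/2}\widehat{p}_0\|_{0,\O}$. The first (pure $\L^2$) bound then follows at once from the Poincar\'e--Friedrichs inequality on $\H$, using $|\Gamma_D|>0$.

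For the last inequality I would plug in the explicit formula $\lambda(x)^{-1}=(1-2\nu)/(E(x)\nu)$, use $E(x)\geq E_{\min}$ to factor out the $\nu$-dependence as $\|\lambda^{-1/2}\widehat{p}_0\|_{0,\O}\lesssim ((1-2\nu)/\nu)^{1/2}\|\widehat{p}_0\|_{0,\O}$, and then invoke the $\nu$-independent Stokes stability $\|\widehat{p}_0\|_{0,\O}\lesssim\|\ff\|_{0,\O}$ coming from the Babu\v ska--Brezzi theory applied to \eqref{def:limit_system_source}. The only subtlety, and the point one must monitor carefully, is that every constant hidden by the symbol $\lesssim$ along the chain depends solely on $\O$, $E_{\min}$, $E_{\max}$ and the inf-sup constant $\beta_2$, so that the entire scaling with respect to the Poisson ratio is captured exactly by the explicit prefactor $((1-2\nu)/\nu)^{1/2}$. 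This robustness is already built into the construction of Lemma~\ref{lem:stab-A}, so I do not anticipate any real obstacle beyond bookkeeping.
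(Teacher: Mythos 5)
Your proposal is correct and follows essentially the same route as the paper: subtract \eqref{def:limit_system_source} from \eqref{def:system_source_complete}, shift $c(\widehat{p}_0,\cdot)$ to the right-hand side so the error pair solves a perturbed saddle-point problem with that datum, invoke the stability (Lemma~\ref{lem:stab-A}) to get $\vertiii{(\widehat{\bu}-\widehat{\bu}_0,\widehat{p}-\widehat{p}_0)}\lesssim\|\lambda^{-1/2}\widehat{p}_0\|_{0,\O}$, and then use the explicit form of $\lambda$ together with the $\nu$-independent stability of the Stokes problem. Your write-up merely makes explicit (Cauchy--Schwarz plus cancellation of one power of the triple norm, Poincar\'e for the $\L^2$ bound) what the paper compresses into ``using stability result gives.''
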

\begin{proof}
Let $\boldsymbol{f}\in\L^2(\Omega)^d$. Subtracting \eqref{def:limit_system_source} from \eqref{def:system_source_complete} we obtain 
\begin{align*}
\displaystyle 2\int_{\Omega}\mu(x)\beps(\widehat{\bu}-\widehat{\bu}_0):\beps(\bv)\, dx-\int_{\Omega}(\widehat{p}-\widehat{p}_0)\vdiv\bv\, dx&=0\quad\forall\bv\in\H\\
\displaystyle-\int_{\Omega}q\vdiv(\widehat{\bu}-\widehat{\bu}_0)\, dx+\int_{\Omega}\frac{\widehat{p}q}{\lambda}\, dx&=0\quad\forall q\in\Q.
\end{align*}
Moreover, it also holds:
\begin{align*}
\displaystyle 2\int_{\Omega}\mu\beps(\widehat{\bu}-\widehat{\bu}_0):\beps(\bv)-\int_{\Omega}(\widehat{p}-\widehat{p}_0)\vdiv\bv&=0\quad\forall\bv\in\H\\
\displaystyle-\int_{\Omega}q\vdiv(\widehat{\bu}-\widehat{\bu}_0)+\int_{\Omega}\frac{(\widehat{p}-\widehat{p}_0)q}{\lambda}&=-\int_{\Omega}\frac{\widehat{p}_0q}{\lambda}\quad\forall q\in\Q.
\end{align*}
Using stability result gives:
\begin{align*}
\vertiii{(\widehat{\bu}-\widehat{\bu}_0,\widehat{p}-\widehat{p}_0)}\lesssim \|\lambda^{-1/2}\widehat{p}_0\|_{0,\Omega}\lesssim (2(1-2\nu)/\nu)^{1/2}\|\mu(x)^{-1/2}\boldsymbol{f}\|_{0,\Omega}.
\end{align*}
This concludes the proof.
\end{proof}
As a consequence of this result,
$\bT_{\nu}$ converges to $\bT_0$ as $h\downarrow 0$. Then we have that standard properties
of separation of isolated parts of the spectrum yield the following result:

\begin{lemma}\label{fghtrb}
Let $\chi^0>0$ be an eigenvalue of $\bT_{0}$ of multiplicity $m$. Let $D$
be any disc in the complex plane centered at $\chi^0$ and containing
no other element of the spectrum of $\bT_{0}$. Then, for $\nu$ goes to $\frac12$,
$D$ contains exactly $m$ eigenvalues of $\bT_{\nu}$ (repeated according to their respective
multiplicities). Consequently, each eigenvalue $\chi^0>0$ of $\bT_{0}$
is a limit of eigenvalues $\chi>0$ of $\bT_{\nu}$, as $\nu$ goes to $\frac12$.
\end{lemma}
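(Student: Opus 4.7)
The plan is to reduce this statement to the classical spectral perturbation theory for compact operators on a Banach space (as developed in Kato or the Babu\v{s}ka--Osborn framework), once the right norm convergence of $\bT_\nu$ to $\bT_0$ has been extracted from Lemma~\ref{fghtyu}. The essential input is that the estimate
\begin{equation*}
\|(\bT_{\nu}-\bT_0)\boldsymbol{f}\|_{0,\Omega}\lesssim (2(1-2\nu)/\nu)^{1/2}\|\boldsymbol{f}\|_{0,\Omega}
\end{equation*}
holds uniformly in $\boldsymbol{f}\in\L^2(\Omega)^d$, so that $\bT_\nu\to \bT_0$ in the operator norm of $\mathcal{L}(\L^2(\Omega)^d)$ as $\nu\uparrow 1/2$.

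First I would fix the disc $D$ centered at $\chi^0$ whose closure separates $\chi^0$ from $\sp(\bT_0)\setminus\{\chi^0\}$. Since $\bT_0$ is compact and selfadjoint and $\chi^0$ is isolated in its spectrum, $\partial D$ lies in the resolvent set of $\bT_0$ and $M_0:=\sup_{z\in\partial D}\|(z-\bT_0)^{-1}\|$ is finite. For $\nu$ sufficiently close to $1/2$ the norm bound above guarantees $\|\bT_\nu-\bT_0\|\cdot M_0<1$, so that a Neumann series argument,
\begin{equation*}
(z-\bT_\nu)^{-1}=(z-\bT_0)^{-1}\sum_{k=0}^{\infty}\bigl[(\bT_\nu-\bT_0)(z-\bT_0)^{-1}\bigr]^{k},
\end{equation*}
shows that $\partial D$ lies in the resolvent set of $\bT_\nu$ and that $(z-\bT_\nu)^{-1}$ is uniformly bounded on $\partial D$ with respect to both $z$ and $\nu$.

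Next I would introduce the Riesz spectral projectors associated with the part of the spectrum enclosed by $\partial D$,
\begin{equation*}
E_\nu:=\frac{1}{2\pi i}\oint_{\partial D}(z-\bT_\nu)^{-1}\,dz,\qquad E_0:=\frac{1}{2\pi i}\oint_{\partial D}(z-\bT_0)^{-1}\,dz,
\end{equation*}
and invoke the resolvent identity $(z-\bT_\nu)^{-1}-(z-\bT_0)^{-1}=(z-\bT_\nu)^{-1}(\bT_\nu-\bT_0)(z-\bT_0)^{-1}$ together with the uniform resolvent bounds to conclude $\|E_\nu-E_0\|\to 0$ as $\nu\uparrow 1/2$. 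Since $\dim(\operatorname{Range}(E_0))=m$ by assumption on $\chi^0$, the final step is the standard gap-lemma fact that two projections which are close in operator norm must have the same (finite) rank; applied here it yields $\dim(\operatorname{Range}(E_\nu))=m$ for all $\nu$ sufficiently close to $1/2$, which is exactly the statement that $D$ contains exactly $m$ eigenvalues of $\bT_\nu$ counted with multiplicity. The consequence regarding each $\chi^0$ being a limit of eigenvalues of $\bT_\nu$ follows by letting the radius of $D$ shrink to zero.

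The only delicate step is making precise the passage from $\|E_\nu-E_0\|\to 0$ to equality of ranks; I would either cite Kato's classical gap lemma directly or, alternatively, argue that $E_0 E_\nu : \operatorname{Range}(E_\nu)\to \operatorname{Range}(E_0)$ becomes an isomorphism for $\nu$ close to $1/2$ because it differs from the identity on $\operatorname{Range}(E_\nu)$ by an operator of norm less than one. All remaining manipulations are routine once norm convergence of $\bT_\nu$ to $\bT_0$ has been invoked from Lemma~\ref{fghtyu}.
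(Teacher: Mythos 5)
Your proposal is correct and follows essentially the same route as the paper: the paper simply invokes the norm convergence $\bT_\nu\to\bT_0$ from Lemma~\ref{fghtyu} and the ``standard properties of separation of isolated parts of the spectrum,'' which is precisely the Riesz-projector/resolvent argument you have written out in detail. Your write-up just makes explicit the Neumann-series bound on $\partial D$, the convergence of the spectral projectors, and the rank-equality step that the paper leaves to the cited classical theory.
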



%

\section{Numerical discretization}
\label{sec:fem}
In this section our aim is to describe a finite element discretization of problem \eqref{def:limit_system_eigen_complete}. To do this task, we will introduce two families of inf-sup stable finite elements  whose properties also hold for the eigenvalue problem. We begin by introducing some preliminary definitions and notations to perform the analysis. Let $\mathcal{T}_h=\{T\}$ be a conforming partition  of $\overline{\O}$ into closed simplices $T$ with size $h_T=\text{diam}(T)$. Define $h:=\max_{T\in\mathcal{T}_h}h_T$. 
%
Given a mesh $\mathcal{T}_h\in\mathbb{T}$, we denote by $\H_h$ and $\Q_h$ the finite element spaces that approximate the velocity field and the pressure, respectively. In particular, our study is focused in the following two elections:
\begin{itemize}
\item[(a)] The mini element \cite[Section 4.2.4]{MR2050138}: Here,
\begin{align*}
&\H_h=\{\bv_{h}\in\boldsymbol{C}(\overline{\O})\ :\ \bv_{h}|_T\in[\mathbb{P}_1(T)\oplus\mathbb{B}(T)]^{d} \ \forall \ T\in\mathcal{T}_h\}\cap\H,\\
&\Q_h=\{ q_{h}\in C(\overline{\O})\ :\ q_{h}|_T\in\mathbb{P}_1(T) \ \forall \ T\in\mathcal{T}_h \}\cap \Q,
\end{align*}
where $\mathbb{B}(T)$ denotes the space spanned by local bubble functions.
\item[(b)] The lowest order Taylor--Hood element \cite[Section 4.2.5]{MR2050138}: In this case,
\begin{align*}
&\H_h=\{\bv_{h}\in\boldsymbol{C}(\overline{\O})\ :\ \bv_h|_T\in[\mathbb{P}_2(T)]^{d} \ \forall \ T\in\mathcal{T}_h\}\cap\H,\\
\label{eq:P_TH}
&\Q_h=\{ q_{h}\in C(\overline{\O})\ :\ q_{h}|_T\in\mathbb{P}_1(T) \ \forall\ T\in\mathcal{T} \}\cap \Q.
\end{align*}
\end{itemize}
The discrete analysis will be performed in a general manner, where the both families of finite elements, namely Taylor-Hood and mini element, are considered with no difference. If some difference must be claimed, we will point it out when is necessary.

\subsection{The discrete eigenvalue problem}
Now we are in position to  introduce the discrete counterpart of the eigenvalue problem \eqref{def:limit_system_eigen_complete}: Find $(\kappa_h,\bu_h,p_h)\in\R\times\H\times\Q$ such that
\begin{equation}\label{def:limit_system_eigen_complete_disc}
	\left\{
	\begin{array}{rcll}
a(\bu_h,\bv_h)      +b(\bv_h,p_h)&=&\kappa_h d(\bu_h,\bv_h)&\forall\bv_h\in\H_h,\\
b(\bu_h,q_h)     - c(p_h,q_h)  & =&\;0 &\forall q_h\in\Q_h.
\end{array}
\right.
\end{equation}
Since the choice of mini element and Taylor-Hood are inf-sup stable, we are in position to introduce the discrete solution operator $\bT_{\nu,h}:\L^2(\Omega)^d\rightarrow \H_h$ defined by $\boldsymbol{f}\mapsto \bT_{\nu,h}:=\widehat{\bu}_h$, where the pairs $(\widehat{\bu}_h,\widehat{p}_h)\in\H_h\times\Q_h$ solves the following discrete source problem
\begin{equation*}
\label{def:limit_system_source_complete_disc}
	\left\{
	\begin{array}{rcll}
a(\widehat{\bu}_h,\bv_h)      +b(\bv_h,\widehat{p}_h)&=& d(\boldsymbol{f},\bv_h)&\forall\bv_h\in\H_h,\\
b(\widehat{\bu}_h,q_h)     - c(\widehat{p}_h,q_h)  & =&\;0 &\forall q_h\in\Q_h.
\end{array}
\right.
\end{equation*}
This problem is clearly well posed and the solutions satisfies the following estimate
\begin{equation*}
\Vert\widehat{\bu}_h\Vert_{0,\O}\le \vertiii{(\widehat{\bu}_h,\widehat{p}_h)}\lesssim \|\boldsymbol{f}\|_{0,\O}.
\end{equation*}
As in the continuous case, we have that $\kappa_h$
is an eigenvalue of problem \eqref{def:limit_system_eigen_complete_disc}
if and only if $\displaystyle\chi_h:=\frac{1}{\kappa_h}$ is an
eigenvalue of $\bT_{\nu,h}$ with the
same multiplicity and corresponding eigenfunctions.

Moreover, if $\mathcal{L}_h$ represents the Lagrange interpolator operator and $\mathcal{P}_h$ represents the $\L^2$-projection,   the solutions of the continuous and discrete problems satisfy
\begin{multline*}
\vertiii{(\widehat{\bu}-\widehat{\bu}_h,\widehat{p}-\widehat{p}_h)}^2\lesssim\inf_{(\bv,q)\in\H_h\times\Q_h}\vertiii{(\widehat{\bu}-\bv,\widehat{p}-q)}^2\lesssim\vertiii{(\widehat{\bu}-\mathcal{L}_h\widehat{\bu},\widehat{p}-\mathcal{P}_h\widehat{p})}^2\\
=\left(\|\mu(x)^{1/2}\nabla(\widehat{\bu}-\mathcal{L}_h\widehat{\bu})\|_{0,\O}^2+\|\mu(x)^{-1/2}(\widehat{p}-\mathcal{P}_h\widehat{p})\|_{0,\O}^2+\|\lambda(x)^{-1/2}(\widehat{p}-\mathcal{P}_h\widehat{p})\|_{0,\O}^2 \right)\\
\lesssim \left(E_{\max}\|\nabla(\widehat{\bu}-\mathcal{L}_h\widehat{\bu})\|_{0,\O}^2+E_{\min}^{-1}\|(\widehat{p}-\mathcal{P}_h\widehat{p})\|_{0,\O}+E_{\min}^{-1}(1-2\nu)\|(\widehat{p}-\mathcal{P}_h\widehat{p})\|_{0,\O} \right)\\
\lesssim h^{2s}\max\{E_{\max}, E_{\min}^{-1}(1+(1-2\nu)) \}\left(\|\widehat{\bu}\|_{1+s,\O}^2+\|\widehat{p}\|_{s,\O}^{2} \right),
\end{multline*}
Hence, from Lemma \ref{rmrk:additional} we obtain
\begin{equation}
\label{eq:error_triple}
\vertiii{(\widehat{\bu}-\widehat{\bu}_h,\widehat{p}-\widehat{p}_h)}\lesssim h^s\left(\|\widehat{\bu}\|_{1+s,\O}+\|\widehat{p}\|_{s,\O}\right) \lesssim h^s\Vert \boldsymbol{f}\Vert_{0,\O}.
\end{equation}

Similar as the continuous case, with the aid of the bilinear form $\mathcal{A}(\cdot,\cdot)$ (cf. \eqref{eq:form_A}) we can rewrite the discrete eigenvalue problem \eqref{def:limit_system_eigen_complete_disc} as follows: Find $\kappa_h\in\mathbb{R}$ and $(\boldsymbol{0},0)\neq (\bu_h,p_h)\in\H_h\times\Q_h$ such that
\begin{equation*}
\label{eq:eigen_Ah}
\mathcal{A}((\bu_h,p_h),(\bv_h,q_h))=\kappa_h d(\bu_h,\bv_h)\quad\forall (\bv_h,q_h)\in\H_h\times \Q_h.
\end{equation*}
%

Now we prove an error estimate in $\L^2$ for the displacement  which we prove by using a standard duality argument. This estimate will be helpful to establish the convergence in norm of the solution operators.
\begin{lemma}
\label{lmm:l2_error}
There following estimate holds 
\begin{align*}
||\widehat{\bu}-\widehat{\bu}_h||_{0,\O}\lesssim h^s \vertiii{(\widehat{\bu}-\widehat{\bu}_h,\widehat{p}-\widehat{p}_h)}.
\end{align*}
where the hidden constant is  independent of $\nu$ and $s$ is as in Lemma \ref{rmrk:additional}.
\end{lemma}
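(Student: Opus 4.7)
The plan is to run a standard Aubin--Nitsche duality argument adapted to the saddle-point setting, leveraging the additional regularity provided by Lemma~\ref{rmrk:additional} and the already established energy-norm estimate \eqref{eq:error_triple} applied to a dual problem.

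First I would introduce the dual source problem driven by the $\L^2$ error: find $(\boldsymbol{\phi},\xi)\in\H\times\Q$ such that
\begin{equation*}
a(\boldsymbol{\phi},\bv)+b(\bv,\xi)=d(\widehat{\bu}-\widehat{\bu}_h,\bv)\quad\forall\bv\in\H,\qquad b(\boldsymbol{\phi},q)-c(\xi,q)=0\quad\forall q\in\Q.
\end{equation*}
Since the bilinear forms $a$ and $c$ are symmetric, this is essentially the same source problem as \eqref{def:system_source_complete} with data $\boldsymbol{g}:=\widehat{\bu}-\widehat{\bu}_h\in\L^2(\Omega)^d$, so Lemma~\ref{rmrk:additional} gives $(\boldsymbol{\phi},\xi)\in\H^{1+s}(\Omega)^d\times\H^s(\Omega)$ with $\|\boldsymbol{\phi}\|_{1+s,\O}+\|\xi\|_{s,\O}\lesssim\|\widehat{\bu}-\widehat{\bu}_h\|_{0,\O}$.

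Next I would test the first equation of the dual problem with $\bv=\widehat{\bu}-\widehat{\bu}_h$ and the second with $q=\widehat{p}-\widehat{p}_h$, then add them (with the sign matching the symmetrized form $\mathcal{A}$ from \eqref{eq:form_A}) to obtain
\begin{equation*}
\|\widehat{\bu}-\widehat{\bu}_h\|_{0,\O}^2 = \mathcal{A}\bigl((\boldsymbol{\phi},\xi),(\widehat{\bu}-\widehat{\bu}_h,\widehat{p}-\widehat{p}_h)\bigr).
\end{equation*}
The crucial ingredient is Galerkin orthogonality: subtracting the discrete source problem from the continuous one yields
\begin{equation*}
\mathcal{A}\bigl((\boldsymbol{\phi}_h,\xi_h),(\widehat{\bu}-\widehat{\bu}_h,\widehat{p}-\widehat{p}_h)\bigr)=0\quad\forall(\boldsymbol{\phi}_h,\xi_h)\in\H_h\times\Q_h,
\end{equation*}
so I can subtract any discrete pair $(\boldsymbol{\phi}_h,\xi_h)$ and use the boundedness of $\mathcal{A}$ in the triple norm to obtain
\begin{equation*}
\|\widehat{\bu}-\widehat{\bu}_h\|_{0,\O}^2 \lesssim \vertiii{(\boldsymbol{\phi}-\boldsymbol{\phi}_h,\xi-\xi_h)}\,\vertiii{(\widehat{\bu}-\widehat{\bu}_h,\widehat{p}-\widehat{p}_h)}.
\end{equation*}

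Finally I would choose $\boldsymbol{\phi}_h:=\mathcal{L}_h\boldsymbol{\phi}$ and $\xi_h:=\mathcal{P}_h\xi$ and invoke exactly the interpolation/projection chain that led to \eqref{eq:error_triple}, applied now to the dual pair, to conclude
\begin{equation*}
\vertiii{(\boldsymbol{\phi}-\boldsymbol{\phi}_h,\xi-\xi_h)}\lesssim h^s\bigl(\|\boldsymbol{\phi}\|_{1+s,\O}+\|\xi\|_{s,\O}\bigr)\lesssim h^s\,\|\widehat{\bu}-\widehat{\bu}_h\|_{0,\O},
\end{equation*}
and canceling one factor of $\|\widehat{\bu}-\widehat{\bu}_h\|_{0,\O}$ yields the claim. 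The only step requiring care is the $\nu$-independence of the hidden constants: one must check that the regularity bound of Lemma~\ref{rmrk:additional} and the interpolation estimates survive when bounded in the weighted triple norm, which is the case because the factors $E_{\max}$ and $E_{\min}^{-1}(1-2\nu)$ appearing in \eqref{eq:error_triple} are already absorbed into the $\lesssim$ convention that the paper adopts under the mild assumption $\mu,1/\lambda\in\mathrm{W}^{1,\infty}(\Omega)$. This bookkeeping is the main (though routine) obstacle; everything else is the standard Brezzi-type duality template.
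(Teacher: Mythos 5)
Your proposal is correct and follows essentially the same route as the paper: an Aubin--Nitsche duality argument with the symmetrized form $\mathcal{A}$, the regularity estimate of Lemma~\ref{rmrk:additional} applied to the dual problem with datum $\widehat{\bu}-\widehat{\bu}_h$, Galerkin orthogonality to insert the interpolants $\mathcal{L}_h$ and $\mathcal{P}_h$, and boundedness of $\mathcal{A}$ in the weighted norm followed by cancellation of one factor of $\|\widehat{\bu}-\widehat{\bu}_h\|_{0,\O}$. No substantive differences from the paper's proof.
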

\begin{proof}
Assume that $(\widehat{\phi},\widehat{\psi})\in\H\times\Q$ solves the following dual source problem  
\begin{equation*}\label{def:dual_system_source_complete_disc}
	\begin{array}{rcll}
\mathcal{A}((\widehat{\phi},\widehat{\psi}),(\bv,q))&=& d(\widehat{\bu}-\widehat{\bu}_h,\bv)&\qquad\forall(\bv,q)\in\H\times \Q,
\end{array}
\end{equation*}
Let us assume that  $(\widehat{\phi},\widehat{\psi})$ is the unique solution
of \eqref{def:system_source_complete}, then there exists $s\in(0,1]$
such that the following estimate holds
\begin{equation}\label{eq:phi-psi-estimate}
	\|\widehat{\phi}\|_{1+s,\O}+\|\widehat{\psi}\|_{s,\O}\lesssim \|\widehat{\bu}-\widehat{\bu}_h\|_{0,\O}.
\end{equation}
Making a specific choice $\bv=\widehat{\bu}-\widehat{\bu}_h$ and $q=\widehat{p}-\widehat{p}_h$, we find that
\begin{align*}
||\widehat{\bu}-\widehat{\bu}_h||_{0,\O}^2=\mathcal{A}((\widehat{\phi},\widehat{\psi}),(\widehat{\bu}-\widehat{\bu}_h,\widehat{p}-\widehat{p}_h)).
\end{align*}
Using the fact that $\mathcal{A}((\widehat{\bu}-\widehat{\bu}_h,\widehat{p}-\widehat{p}_h),(\widehat{\phi}_h,\widehat{\psi}_h))=0$
we obtain 
\begin{align*}
||\widehat{\bu}-\widehat{\bu}_h||_{0,\O}^2=\mathcal{A}((\widehat{\phi}-\widehat{\phi}_h,\widehat{\psi}-\widehat{\psi}_h),(\widehat{\bu}-\widehat{\bu}_h,\widehat{p}-\widehat{p}_h)).
\end{align*}
Using the Cauchy-Schwarz, estimate \eqref{eq:phi-psi-estimate}, the approximation results and the stability result leads to the stated result.
\end{proof}
Let us prove the key result that establish the convergence of  $\bT_{\nu,h}$ to $\bT_{\nu}$ as $h\downarrow 0$.
\begin{lemma}
\label{lmm:conv_norm}
Given $\boldsymbol{f}\in \L^2(\O)^d$, the following estimate holds,
independent of $\nu$, such that
\begin{equation*}
\|(\bT_\nu-\bT_{\nu,h})\boldsymbol{f}\|_{0,\O}\lesssim h^{2s}\|\boldsymbol{f}\|_{0,\O},
\end{equation*}
where the hidden constant is  independent of $\nu$ and $s$ is as in Lemma \ref{rmrk:additional}.
\end{lemma}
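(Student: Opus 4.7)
The plan is to combine the two ingredients already at our disposal: the duality estimate of Lemma~\ref{lmm:l2_error} and the energy error estimate~\eqref{eq:error_triple}. By definition of the solution operators, $(\bT_\nu - \bT_{\nu,h})\boldsymbol{f} = \widehat{\bu} - \widehat{\bu}_h$, where $(\widehat{\bu},\widehat{p})$ and $(\widehat{\bu}_h,\widehat{p}_h)$ solve the continuous and discrete source problems with right-hand side $\boldsymbol{f}$, respectively. Hence the left-hand side of the claim is exactly $\|\widehat{\bu}-\widehat{\bu}_h\|_{0,\Omega}$.

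First I would invoke Lemma~\ref{lmm:l2_error} to obtain
\begin{equation*}
\|\widehat{\bu}-\widehat{\bu}_h\|_{0,\Omega} \lesssim h^{s}\, \vertiii{(\widehat{\bu}-\widehat{\bu}_h,\widehat{p}-\widehat{p}_h)},
\end{equation*}
with a hidden constant independent of $\nu$. Then I would apply the a priori energy estimate~\eqref{eq:error_triple}, which provides
\begin{equation*}
\vertiii{(\widehat{\bu}-\widehat{\bu}_h,\widehat{p}-\widehat{p}_h)} \lesssim h^{s}\,\|\boldsymbol{f}\|_{0,\Omega},
\end{equation*}
again with constants robust in $\nu$ (by Lemma~\ref{rmrk:additional} and the fact that the bounds on the interpolation errors depend only on $E_{\min}$, $E_{\max}$, not on $\lambda$ becoming large). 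Chaining these two inequalities gives the desired $h^{2s}$ bound.

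In short, the proof is a one-line consequence of the Aubin--Nitsche-type duality argument already carried out in Lemma~\ref{lmm:l2_error} and the Galerkin approximation estimate~\eqref{eq:error_triple}. There is no real obstacle here: the only point that deserves attention is to verify that the constants remain independent of $\nu$ as $\lambda\uparrow\infty$, which has already been taken care of in the cited results because the weighted triple norm $\vertiii{\cdot}$ is specifically designed so that the inf-sup constant $\beta_2$ and the continuity bounds do not degenerate in the incompressible limit. After this check, the conclusion follows immediately.
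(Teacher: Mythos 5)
Your proposal is correct and follows essentially the same route as the paper: the authors also identify $(\bT_\nu-\bT_{\nu,h})\boldsymbol{f}=\widehat{\bu}-\widehat{\bu}_h$ and bound $\|\widehat{\bu}-\widehat{\bu}_h\|_{0,\Omega}$ by chaining Lemma~\ref{lmm:l2_error} with the energy estimate \eqref{eq:error_triple} to obtain the $h^{2s}$ rate with $\nu$-independent constants. The only difference is cosmetic: the paper's proof additionally writes out a gradient-error bound of order $h^{s}$ (cf.\ \eqref{eq:grad_error}), which is not needed for the stated $\L^2$ estimate, whereas you go directly to the conclusion.
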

\begin{proof}
Let $\boldsymbol{f}\in \L^2(\O)^d$. Then
\begin{equation*}
\|(\bT_\nu-\bT_{\nu,h})\boldsymbol{f}\|_{1,\O}=\|\widehat{\bu}-\widehat{\bu}_h\|_{1,\O}=\|\widehat{\bu}-\widehat{\bu}_h\|_{0,\O}+\|\nabla(\widehat{\bu}-\widehat{\bu}_h)\|_{0,\O}.
\end{equation*}
For the gradient error we have
\begin{multline}
	\label{eq:grad_error}
	\|\nabla(\widehat{\bu}-\widehat{\bu}_h)\|_{0,\O}\lesssim \mu_{\min}^{-1/2}\|\mu(x)^{1/2}\nabla(\widehat{\bu}-\widehat{\bu}_h)\|_{0,\O}\\
	\lesssim \mu_{\min}^{-1/2}\|\mu(x)^{1/2}\nabla(\widehat{\bu}-\widehat{\bu}_h)\|_{0,\O}+\|\mu(x)^{-1/2}(\widehat{p}-\widehat{p}_h)\|_{0,\O}+\|\lambda(x)^{-1/2}(\widehat{p}-\widehat{p}_h)\|_{0,\O}\\
	\lesssim h^s\max\{\mu_{\min}^{-1/2},1\}\left(\|\widehat{\bu}\|_{1+s,\O}+\|\widehat{p}\|_{s,\O}\right) \lesssim h^s \max\{\mu_{\min}^{-1/2},1\}\Vert \boldsymbol{f}\Vert_{0,\O},
\end{multline}
where we have used the definition of $\vertiii{\cdot}$ and \eqref{eq:error_triple}.  On the other hand, the error $\|\widehat{\bu}-\widehat{\bu}_h\|_{0,\O}$ is clearly estimated, first by Lemma \ref{lmm:l2_error} and then by \eqref{eq:error_triple} leading to
\begin{equation}
\label{eq:dsiplacement_l2}
\|\widehat{\bu}-\widehat{\bu}_h\|_{0,\O}\lesssim h^{2s}\|\boldsymbol{f}\|_{0,\O}.
\end{equation}
Finally, combining \eqref{eq:dsiplacement_l2} and \eqref{eq:grad_error} we conclude the proof.
\end{proof}

From the previous lemma, we can conclude the convergence
in norm for $\bT_{\nu,h}$ to $\bT_\nu$ as $h \downarrow 0$.
This is a key ingredient in order to obtain error estimates
for eigenvalues and eigenfunctions.
In fact, an immediate consequence of
Lemma~\ref{lmm:conv_norm}
is that isolated parts of $\sp(\bT_\nu)$ are approximated
by isolated parts of $\sp(\bT_{\nu,h})$.
It means that if $\chi$ is a nonzero eigenvalue of $\bT_\nu$ with algebraic
multiplicity $m$, hence there exist $m$ eigenvalues
$\chi_h^{(1)},\ldots,\chi_h^{(m)}$ of $\bT_{\nu,h}$
(repeated according to their respective multiplicities)
that will converge to $\mu$ as $h$ goes to zero.

In what follows, we denote by $\mathfrak{E}$ the eigenspace of $\bT_\nu$
associated to the eigenvalue $\chi$ and by $\mathfrak{E}_h$
the invariant subspace of $\bT_{\nu,h}$
spanned by the eigenspaces of $\bT_{\nu,h}$ associated to $\chi_h^{(1)},\ldots,\chi_h^{(m)}$.

Now our  goal of this section is deriving error estimates
for the eigenfunctions and eigenvalues. With this goal in mind,
first  we need to recall the definition of the  gap $\widehat{\delta}$
between two closed subspaces $\CM$ and $\CN$ of  $\L^{2}(\Omega)$:
\begin{equation*}
\widehat{\delta}(\CM,\CN) := \max\{\delta (\CM, \CN), \delta (\CM, \CN)\},
\end{equation*}
where
\begin{equation*}
\delta (\CM,\CN) := \underset{x \in \CM : \|x\|_{0, \Omega} = 1}{\sup} \left\lbrace \underset{y \in \CN}{\inf} \quad \|x-y\|_{0, \Omega} \right\rbrace.
\end{equation*}
We also define  $$\gamma_h:=\sup\limits_{\boldsymbol{v}\in \mathcal{E}:\Vert \boldsymbol{v}\Vert_{0,\O}=1}
\Vert(\bT_{\nu}-\bT_{\nu,h})\boldsymbol{v}\Vert_{0,\O}.$$
In order to obtain $\nu$-independent error estimates,
in what follows, we assume that $\chi$ is an eigenvalue
of $\bT_{\nu}$ converging to a simple eigenvalue $\chi_0$
of $\bT_{0}$, as $\nu$ goes to $\frac12$.
The following error estimates for the approximation
of eigenvalues and eigenfunctions hold true.
The result can be obtained from 
from \cite[Theorems 7.1 and 7.3]{BO}.

\begin{theorem}
\label{gap}
There exists a strictly positive constant $C$,
independent of $\nu$ and $h$, such that
\begin{align}
\widehat{\delta}(\mathfrak{E},\mathfrak{E}_h)
& \lesssim \gamma_h,\label{estrfg1}\\
\left|\chi-\chi_h\right|
& \lesssim \gamma_h.\label{estrfg2}
\end{align}
\end{theorem}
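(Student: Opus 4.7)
The plan is to invoke the abstract spectral approximation theory of Babu\v{s}ka--Osborn \cite{BO}, whose hypotheses have been effectively verified in the preceding lemmas. The decisive input is the norm convergence $\|\bT_\nu-\bT_{\nu,h}\|_{\mathcal{L}(\L^2(\Omega)^d)}\to 0$ as $h\downarrow 0$, established in Lemma~\ref{lmm:conv_norm}: since $\bT_\nu$ and $\bT_{\nu,h}$ are compact and selfadjoint on $\L^2(\Omega)^d$, this convergence guarantees that isolated parts of $\sp(\bT_\nu)$ are approximated by isolated parts of $\sp(\bT_{\nu,h})$.

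First, I would set the stage by reducing to a simple eigenvalue. Under the standing assumption that $\chi$ is the limit of a simple eigenvalue $\chi_0$ of $\bT_0$, Lemma~\ref{fghtrb} yields a disc $D$ centered at $\chi_0$ whose intersection with $\sp(\bT_\nu)$ consists, for $\nu$ close enough to $\tfrac12$, of $\chi$ alone (counted with multiplicity one), so $\dim\mathfrak{E}=1$ and the corresponding spectral projector $E_\nu:=\frac{1}{2\pi i}\oint_{\partial D}(z-\bT_\nu)^{-1}\,dz$ is well defined. Moreover Lemma~\ref{fghtyu} ensures that the resolvent $(z-\bT_\nu)^{-1}$ is uniformly bounded on $\partial D$ in $\nu\in[\nu_0,\tfrac12)$, so all constants coming from such contour integrals can be taken independent of $\nu$.

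Second, for $h$ small enough, Lemma~\ref{lmm:conv_norm} implies $\|\bT_\nu-\bT_{\nu,h}\|<\dist(\partial D,\sp(\bT_\nu))$; consequently $\partial D$ lies in the resolvent set of $\bT_{\nu,h}$ and $D$ contains a unique discrete eigenvalue $\chi_h$ (with $\mathfrak{E}_h$ the associated one-dimensional discrete eigenspace). I would then apply \cite[Theorem~7.1]{BO} in our setting, which yields
\begin{equation*}
\widehat{\delta}(\mathfrak{E},\mathfrak{E}_h)\;\le\; C\,\bigl\|(\bT_\nu-\bT_{\nu,h})|_{\mathfrak{E}}\bigr\|_{\mathcal{L}(\L^2(\Omega)^d)}\;\le\;C\,\gamma_h,
\end{equation*}
where the last inequality is just the definition of $\gamma_h$, giving \eqref{estrfg1}. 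For \eqref{estrfg2}, \cite[Theorem~7.3]{BO} applied to the simple eigenvalue $\chi$ (using that $\bT_\nu$ is selfadjoint so the adjoint eigenspace coincides with $\mathfrak{E}$) gives
\begin{equation*}
|\chi-\chi_h|\;\le\;C\,\bigl\|(\bT_\nu-\bT_{\nu,h})|_{\mathfrak{E}}\bigr\|_{\mathcal{L}(\L^2(\Omega)^d)}\;\le\;C\,\gamma_h.
\end{equation*}

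The main obstacle, and the reason for the careful setup above, is certifying that the constant $C$ is independent of $\nu$. The Babu\v{s}ka--Osborn bounds depend on the norms of the spectral projectors $E_\nu$ and on the length and resolvent norm along $\partial D$; the crucial point is that, thanks to the uniform convergence $\bT_\nu\to\bT_0$ provided by Lemma~\ref{fghtyu} and the fact that $\chi_0$ is a simple eigenvalue of the $\nu$-independent operator $\bT_0$, one may fix the disc $D$ once and for all (depending only on $\chi_0$ and the spectral gap of $\bT_0$) and obtain a uniform bound on $\|(z-\bT_\nu)^{-1}\|$ for $z\in\partial D$ and $\nu$ in a left-neighborhood of $\tfrac12$. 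Once this uniform resolvent bound is secured, the constants entering the Babu\v{s}ka--Osborn estimates are $\nu$-independent, and the proof concludes. \qed
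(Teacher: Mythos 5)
Your proposal is correct and follows essentially the same route as the paper: both apply Theorems 7.1 and 7.3 of \cite{BO} using the norm convergence from Lemma~\ref{lmm:conv_norm}, and both secure $\nu$-independence of the constant by bounding the distance of $\chi$ to the rest of $\sp(\bT_\nu)$ below via Lemmas~\ref{fghtyu} and~\ref{fghtrb} in terms of the spectral gap of the $\nu$-independent operator $\bT_0$ at the simple eigenvalue $\chi_0$. Your additional detail on the spectral projectors and uniform resolvent bounds simply makes explicit what the paper's shorter argument leaves implicit.
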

\begin{proof}
On the one hand, We have that \eqref{estrfg1} is a direct consequence of Theorem 7.1 in \cite{BO},
with a constant $C>0$ depending on the constant in \eqref{eq:dsiplacement_l2}
(which is independent of $\nu$) and on the inverse of the distance
of $\chi$ to the rest of the spectrum of $\bT_\nu$.
Then, from Lemma~\ref{fghtrb}, \eqref{fghtyu1} implies that
for $\nu$ goes to $\frac12$ this distance can be
bounded below in terms of the distance of $\chi_0$
to the rest of the spectrum of $\bT_{0}$, which
does not depend on Poisson ratio.
On the other hand, estimate \eqref{estrfg2}
follows from Theorem 7.3 in \cite{BO}.
\end{proof}
Moreover, by using the additional regularity of the eigenfunctions,
we obtain the following result.
\begin{theorem}\label{gapr}
There following estimates hold 
\begin{align}
&\Vert(\bT_{\nu}-\bT_{\nu,h})\boldsymbol{v}\Vert_{0,\O}
\lesssim h^{2r}\|\boldsymbol{v}\|_{0,\O}\qquad \forall
\boldsymbol{v}\in \mathfrak{E},\label{bou_gamma_h}
\end{align}
and as a consequence,
\begin{align}
&\gamma_h \lesssim  h^{2r}, \label{bound1r}
\end{align}
where the hidden constants are independent of $\nu$ and $h$ and $r>0$ is as in Theorem \ref{th:reg_velocity}.
\end{theorem}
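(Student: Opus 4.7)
The plan is to mimic the proof of Lemma~\ref{lmm:conv_norm}, but to exploit the improved eigenfunction regularity of Theorem~\ref{th:reg_velocity} in place of the generic source-problem regularity of Lemma~\ref{rmrk:additional}. Fix $\boldsymbol{v}\in\mathfrak{E}$ and set $\widehat{\bu}:=\bT_{\nu}\boldsymbol{v}$ with associated pressure $\widehat{p}$. Since $\boldsymbol{v}$ is an eigenfunction with nonzero eigenvalue $\chi$, we have $\widehat{\bu}=\chi\boldsymbol{v}$, so Theorem~\ref{th:reg_velocity} applies directly to give
\[
\|\widehat{\bu}\|_{1+r,\O}+\|\widehat{p}\|_{r,\O}\lesssim\|\boldsymbol{v}\|_{0,\O},
\]
with hidden constant independent of $\nu$.

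The first step is to re-derive \eqref{eq:error_triple} with this improved shift, that is, to show
\[
\vertiii{(\widehat{\bu}-\widehat{\bu}_h,\widehat{p}-\widehat{p}_h)}\lesssim h^{r}\|\boldsymbol{v}\|_{0,\O}.
\]
This is obtained by repeating the chain of inequalities leading to \eqref{eq:error_triple} verbatim, with $s$ replaced by $r$ throughout, since the approximation estimates for the Lagrange interpolator $\mathcal{L}_h$ and the $\L^{2}$-projection $\mathcal{P}_h$ admit any shift compatible with the underlying regularity, and the prefactor $\max\{E_{\max},E_{\min}^{-1}(1+(1-2\nu))\}$ is $\nu$-uniformly bounded. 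The second step is to run the duality argument of Lemma~\ref{lmm:l2_error}: applied to the $\L^{2}$ datum $\widehat{\bu}-\widehat{\bu}_h$, it contributes an extra factor $h^{r}$ and produces
\[
\|(\bT_{\nu}-\bT_{\nu,h})\boldsymbol{v}\|_{0,\O}=\|\widehat{\bu}-\widehat{\bu}_h\|_{0,\O}\lesssim h^{2r}\|\boldsymbol{v}\|_{0,\O},
\]
which is precisely \eqref{bou_gamma_h}. Finally, \eqref{bound1r} is obtained immediately by taking the supremum of the latter over the unit sphere of the finite-dimensional eigenspace $\mathfrak{E}$.

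The main technical obstacle is the bookkeeping of regularity exponents: one needs to verify that both the primal and dual shifts are controlled by the same exponent $r$, rather than mixing the source exponent $s$ of Lemma~\ref{rmrk:additional} with the eigenfunction exponent $r$ of Theorem~\ref{th:reg_velocity}. On a Lipschitz domain the Sobolev shift of both problems is dictated by the same corner/edge singularities of $\O$, so the two exponents may be identified (and in any case one has $r\le s$ since the eigenfunction itself solves a source problem with $\L^{2}$ right-hand side, yielding $h^{r+s}\le h^{2r}$ for $h\le 1$). The $\nu$-independence of all constants is inherited from Theorem~\ref{th:reg_velocity} and Lemma~\ref{lmm:l2_error}, neither of which introduces $\nu$-dependent factors once the weighted norm $\vertiii{\cdot}$ is in use.
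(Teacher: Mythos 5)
Your argument is essentially the paper's own proof: the paper obtains \eqref{bou_gamma_h} precisely by repeating the proof of Lemma~\ref{lmm:conv_norm} with the eigenfunction regularity of Theorem~\ref{th:reg_velocity} in place of the source regularity of Lemma~\ref{rmrk:additional}, and deduces \eqref{bound1r} directly from the definition of $\gamma_h$, exactly as you do. The only quibble is your parenthetical claim that $r\le s$ — since an eigenfunction solves the source problem with $\L^2(\O)^d$ data, the natural inequality runs the other way ($r\ge s$) — but the paper itself implicitly identifies the primal and dual shifts just as you do in your main line of reasoning, so this does not affect the substance.
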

\begin{proof}
The inequality~\eqref{bou_gamma_h} is obtained repeating
the proof of Lemma~\ref{lmm:conv_norm} and Theorem \ref{th:reg_velocity}. Estimate \eqref{bound1r} follows from the definition of $\gamma_h$.
\end{proof}
Finally, we have the following result for vibration frequencies
of the elasticity eigenproblem, which is an immediate consequence of \eqref{estrfg2}.
\begin{corollary}\label{rtyuj}
The following estimate holds
\begin{equation*}
\label{eq:double_order}
|\kappa-\kappa_h|\lesssim  h^{2r},
\end{equation*}
where the hidden constant is  independent of $\nu$ and $h$ and $r>0$ is as in Theorem \ref{th:reg_velocity}.
\end{corollary}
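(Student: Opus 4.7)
The plan is to deduce the bound on $|\kappa-\kappa_h|$ by combining the already-established estimates \eqref{estrfg2} and \eqref{bound1r} with the reciprocal relation between the eigenvalues of the solution operators and the eigenvalues of the original elasticity problem. Recall that $(\kappa,(\bu,p))$ solves \eqref{def:limit_system_eigen_complete} if and only if $\chi = 1/\kappa$ is an eigenvalue of $\bT_\nu$, and similarly $\chi_h = 1/\kappa_h$ is an eigenvalue of $\bT_{\nu,h}$ whenever $(\kappa_h,(\bu_h,p_h))$ solves \eqref{def:limit_system_eigen_complete_disc}.

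First, I would chain the two bounds from Theorems~\ref{gap} and \ref{gapr}: from \eqref{estrfg2} we have $|\chi-\chi_h|\lesssim \gamma_h$, and from \eqref{bound1r} we have $\gamma_h \lesssim h^{2r}$, so that
\begin{equation*}
|\chi-\chi_h|\lesssim h^{2r},
\end{equation*}
with a hidden constant independent of $\nu$ and $h$. Next, I would pass from the $\chi$-error to the $\kappa$-error via the identity
\begin{equation*}
|\kappa-\kappa_h| \;=\; \left|\frac{1}{\chi}-\frac{1}{\chi_h}\right| \;=\; \frac{|\chi-\chi_h|}{\chi\,\chi_h},
\end{equation*}
so it only remains to control $\chi\,\chi_h$ from below by a positive constant independent of $\nu$ and $h$.

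For this last point, I would argue as follows. Since $\chi$ is, by assumption in the framework of Theorem~\ref{gap}, an eigenvalue of $\bT_\nu$ that converges to a simple eigenvalue $\chi_0>0$ of $\bT_0$ as $\nu\to\tfrac12$ (cf.\ Lemma~\ref{fghtrb}), for $\nu$ sufficiently close to $\tfrac12$ we obtain a uniform lower bound $\chi\ge \tfrac12\chi_0$. In the same way, the convergence of $\chi_h$ to $\chi$ provided by Lemma~\ref{lmm:conv_norm} (which is also $\nu$-independent) yields $\chi_h\ge \tfrac14\chi_0$ for $h$ small enough, again uniformly in $\nu$. Hence $\chi\,\chi_h\gtrsim \chi_0^2 >0$ with a constant independent of both $\nu$ and $h$, and substituting into the identity above gives the desired estimate.

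The only conceptual obstacle is precisely the $\nu$-uniformity of this lower bound on $\chi\,\chi_h$; once that has been guaranteed by combining Lemma~\ref{fghtrb} and Lemma~\ref{lmm:conv_norm}, the corollary follows by pure algebra from \eqref{estrfg2} and \eqref{bound1r}. No additional regularity or approximation argument is required beyond what has already been used to establish Theorems~\ref{gap} and \ref{gapr}.
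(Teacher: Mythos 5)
Your proposal is correct and follows essentially the same route as the paper, which simply declares the corollary an immediate consequence of \eqref{estrfg2} (combined with \eqref{bound1r}); the reciprocal identity $|\kappa-\kappa_h|=|\chi-\chi_h|/(\chi\chi_h)$ and the $\nu$- and $h$-uniform lower bound on $\chi\chi_h$ that you spell out via Lemma~\ref{fghtrb} and Lemma~\ref{lmm:conv_norm} are exactly the details the paper leaves implicit.
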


\section{A posteriori error analysis}
\label{sec:apost}
The aim of this section is to introduce a suitable
residual-based error estimator for the Stokes/elasticity
eigenvalue problem. The main aim is to prove that the proposed estimator  is equivalent  with the error.
Moreover, on the forthcoming analysis we will focus only on eigenvalues with multiplicity
1. With  this purpose, we introduce the following definitions
and notations. For any element  $K\in \CT_h$, we denote by $\CE_{K}$ the set of facets of $K$
and 
$$\CE_h:=\bigcup_{K\in\CT_h}\CE_{K}.$$
We decompose $\CE_h=\CE_{\O}\cup\CE_{\partial\O}$,
where  $\CE_{\partial\O}:=\{\ell\in \CE_h:\ell\subset \partial\O\}$
and $\CE_{\O}:=\CE\backslash\CE_{\partial \O}$.
For each inner edge $\ell\in \CE_{\O}$ and for any  sufficiently smooth  function
$\bv$, we define the jump of its normal derivative on $\ell$ by
$$\left[\!\!\left[ \dfrac{\partial \bv}{\partial{ \boldsymbol{n}}}\right]\!\!\right]_\ell:=\nabla (\bv|_{K})  \cdot \boldsymbol{n}_{K}+\nabla ( \bv|_{K'}) \cdot \boldsymbol{n}_{K'} ,$$
where $K$ and $K'$ are  the two elements in $\CT_{h}$  sharing the
edge $\ell$ and $\boldsymbol{n}_{K}$ and $\boldsymbol{n}_{K'}$ are the respective outer unit normal vectors.


\subsection{Residual based a posteriori error estimator}
 The local element residual $(\eta_K)$ and  the edge jump-residual estimator $(\eta_J)$ are defined as follows:
 \begin{equation*}
 \eta_K^2:= h_K^2\|\rho_1\textbf{R}_{1,T}\|_{0,T}^2+\|\rho_2^{1/2}R_{2,T}\|_{0,T}^2,\quad \eta_J^2:=h_E\|\rho_EJ_\ell \|_{0,\ell}^2
 \end{equation*}
 where 
 \begin{equation*}
 \textbf{R}_{1,T} := \bdiv(2\mu_h\beps(\bu_h)) -\nabla p_h + \rho \kappa \bu_h \quad\text{and}\quad
R_{2,T} := \vdiv\bu_h +\frac{1}{\lambda}p_h.
 \end{equation*}
For the jump terms, we define 
 \begin{align*}
 J_\ell &:= \begin{cases}
\displaystyle \frac{1}{2}[\![{(2\mu_h\beps(\bu_h) -p_h\mathbb{I})\textbf{n}}]\!], & E\in \Omega\cap\mathcal{E}_h\\
  {(2\mu_h\beps(\bu_h) -p_h\mathbb{I})\textbf{n}}, & E\in \Gamma_N\cap\mathcal{E}_h\\
  \boldsymbol{0}, & E\in \Gamma_D\cap\mathcal{E}_h.
 \end{cases}
 \end{align*}
 Here, the parameters $\rho_1$, $\rho_2$ and $\rho_3$ are defined by
$$
 	\rho_{1,K}:=  (2\mu_h)^{-1/2},\quad  \rho_2:= \left[(2\mu_h)^{-1}+\lambda^{-1}\right]^{-1},\quad
 	\rho_E := (2\mu_{h})^{-1/2}/\sqrt{2}, 
$$
where $\mu_h$ correspond to the $\L^2$ polynomial projection of $\mu$.  The local data oscillation is defined as $ \Theta^2_K=\|\rho_1(\mu-\mu_h)\beps(\bu_h)\|^2_0$.
 Finally, we discuss the definition of the global a posteriori estimatior $\eta$ and the global data oscillation error as
 \begin{equation*}
 \eta: = \left(\sum_{K\in\mathcal{T}_h}\eta_K^2\right)^{1/2},\quad \Theta := \left(\sum_{K\in\mathcal{T}_h}\Theta_K^2\right)^{1/2}.
 \end{equation*}
 \subsection{Reliability}

The task here is to prove that the error is upper bounded by the estimator, together with 
the high order terms. This is stated in the following result.
\begin{theorem}[Reliability]\label{th:reliability}

 Let $(\kappa,(\bu,p))\in\mathbb{R}\times\H\times\Q$ be a solution of the spectral problem \eqref{def:limit_system_eigen_complete} and let $(\kappa_h,(\bu_h,p_h))\in\mathbb{R}\times\H_h\times\Q_h$ be the finite element approximation of $(\kappa,(\bu,p))$ given as the solution of \eqref{def:limit_system_eigen_complete_disc}. Then, for every $h_0\geq h$ there holds
		\begin{equation*}\label{eq:reliabilityprimal}
			 \vertiii{(\bu-{\bu}_h,p-p_h)}\lesssim \eta+\Theta+|\kappa-\kappa_h|+\kappa\|\bu-\bu_h\|_{0,\O}.
		\end{equation*}
		where the hidden constant is independent of the mesh size, $\nu$ and the  discrete solutions. Additionally, the following reliability of the eigenvalue also holds:
  \begin{align*}
|\kappa-\kappa_h|\lesssim \eta^2+\Theta^2.
 \end{align*} 
\end{theorem}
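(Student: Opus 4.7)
The plan is to prove the two estimates independently. For the triple-norm bound I would rely on the saddle-point stability of Lemma \ref{lem:stab-A} to reduce the error to a residual, while for the eigenvalue bound I would exploit the symmetry of $\mathcal{A}$ through a Rayleigh-type identity. Throughout I would use a quasi-interpolant $(\bv_h, q_h) \in \H_h \times \Q_h$ of an arbitrary test $(\bv, q) \in \H \times \Q$ (Cl\'ement/Scott--Zhang for $\bv$ and the $\L^2$ projection for $q$), whose local approximation estimates in the weighted norms match the weights $\rho_1, \rho_2, \rho_E$ appearing in $\eta$.

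For the first bound, Lemma \ref{lem:stab-A} delivers $(\bv, q) \in \H \times \Q$ with $\vertiii{(\bv,q)} \lesssim \vertiii{(\bu-\bu_h, p-p_h)}$ and
\[\vertiii{(\bu-\bu_h, p-p_h)}^2 \lesssim \mathcal{A}((\bu-\bu_h, p-p_h), (\bv, q)).\]
Using the continuous eigenproblem with test $(\bv, q)$ and the discrete one with test $(\bv_h, q_h)$ produces
\[\mathcal{A}((\bu-\bu_h, p-p_h), (\bv, q)) = \kappa\, d(\bu, \bv) - \kappa_h\, d(\bu_h, \bv_h) - \mathcal{A}((\bu_h, p_h), (\bv - \bv_h, q - q_h)).\]
I would regroup the first two terms as $\kappa\, d(\bu, \bv) - \kappa_h\, d(\bu_h, \bv_h) = \kappa\, d(\bu - \bu_h, \bv) + (\kappa - \kappa_h)\, d(\bu_h, \bv_h) + \kappa\, d(\bu_h, \bv - \bv_h)$, and then perform element-wise integration by parts on $a(\bu_h, \bv - \bv_h) + b(\bv - \bv_h, p_h)$. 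Writing $2\mu\beps(\bu_h) = 2\mu_h \beps(\bu_h) + 2(\mu - \mu_h)\beps(\bu_h)$ separates the data oscillation $\Theta$ from the polynomial residual; internal facets collect into the jumps $J_\ell$ (Dirichlet facets vanish since $\bv-\bv_h$ has vanishing trace there, Neumann facets yield the boundary jump), and the volume residual becomes $-\mathbf{R}_{1,T} + \kappa \bu_h$. The $\kappa \bu_h$ contribution is annihilated by the $\kappa\, d(\bu_h, \bv - \bv_h)$ leftover from the regrouping, while the $b$-and-$c$ part produces $-\sum_K \int_K R_{2,T}\,(q - q_h)$. Cauchy--Schwarz with the weights $\rho_1, \rho_2, \rho_E$, together with the interpolation estimates (which supply the factors $h_K$ and $h_\ell^{1/2}$ controlled by $\|\mu_h^{1/2}\nabla \bv\|$) and the projection bound for $q - q_h$, combined with $\|\bv\|_{0,\O}, \|\bv_h\|_{0,\O} \lesssim \vertiii{(\bv,q)}$, yields
\[\vertiii{(\bu-\bu_h, p-p_h)}^2 \lesssim \vertiii{(\bu-\bu_h, p-p_h)}\,\bigl(\eta + \Theta + |\kappa-\kappa_h| + \kappa\,\|\bu-\bu_h\|_{0,\O}\bigr),\]
from which the first estimate follows after division.

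For the eigenvalue reliability I would exploit the symmetry of $\mathcal{A}$ and the identities $\mathcal{A}((\bu,p),(\bv,q)) = \kappa\, d(\bu,\bv)$ for all $(\bv,q) \in \H \times \Q$ together with its discrete analogue. Expanding $\mathcal{A}((\bu-\bu_h, p-p_h), (\bu-\bu_h, p-p_h))$ by bilinearity and symmetry, and using $\|\bu\|_{0,\O}^2 - 2d(\bu,\bu_h) = \|\bu-\bu_h\|_{0,\O}^2 - \|\bu_h\|_{0,\O}^2$, one arrives at
\[(\kappa - \kappa_h)\,\|\bu_h\|_{0,\O}^2 = \kappa\,\|\bu-\bu_h\|_{0,\O}^2 - \mathcal{A}((\bu-\bu_h, p-p_h), (\bu-\bu_h, p-p_h)).\]
Under the normalization $\|\bu_h\|_{0,\O} = 1$ and using boundedness of $\mathcal{A}$ in the triple norm, this gives $|\kappa-\kappa_h| \lesssim \vertiii{(\bu-\bu_h, p-p_h)}^2 + \kappa\,\|\bu-\bu_h\|_{0,\O}^2$. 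Squaring the first estimate and invoking the duality bound of Lemma \ref{lmm:l2_error} (which transfers to the eigenvalue setting and yields $\|\bu-\bu_h\|_{0,\O} \lesssim h^s\,\vertiii{(\bu-\bu_h, p-p_h)}$) lets us absorb the $\|\bu-\bu_h\|_{0,\O}^2$ and $|\kappa-\kappa_h|^2$ contributions for $h$ small enough, leaving $|\kappa-\kappa_h| \lesssim \eta^2 + \Theta^2$. The main obstacle throughout is keeping all constants independent of $\nu$, which is precisely what forces the weighted Cauchy--Schwarz with $\rho_1, \rho_2, \rho_E$ and the careful isolation of the oscillation $\Theta$ from the variable-coefficient mismatch $\mu - \mu_h$ in the $a$-form.
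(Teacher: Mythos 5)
Your proposal follows essentially the same route as the paper: the triple-norm bound via the stability Lemma~\ref{lem:stab-A} combined with the error equation, a Cl\'ement interpolant/$\L^2$-projection, elementwise integration by parts with the splitting $\mu=\mu_h+(\mu-\mu_h)$ to isolate $\Theta$, and weighted Cauchy--Schwarz; and the eigenvalue bound via the symmetric identity $(\kappa-\kappa_h)\|\bu_h\|_{0,\O}^2=\kappa\|\bu-\bu_h\|_{0,\O}^2-\mathcal{A}((\bu-\bu_h,p-p_h),(\bu-\bu_h,p-p_h))$ combined with the first estimate. Your version is correct (your sign in that identity is the right one, the paper's display has a sign slip that is immaterial for the bound), and your explicit absorption of the $|\kappa-\kappa_h|^2$ and $\|\bu-\bu_h\|_{0,\O}^2$ terms only spells out what the paper leaves implicit.
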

\begin{proof}
Given $(\bv,q)\in\H_h\times\Q_h$, we subtract the continuous formulation (cf. \eqref{def:limit_system_eigen_complete}) and its discrete counterpart (cf. \eqref{def:limit_system_eigen_complete_disc}) in order to obtain the error equation
	\begin{equation}
		\label{lem:reliability-eq001}
		\mathcal{A}((\texttt{e}_{\bu},\texttt{e}_{p}),(\bv,q))=(\kappa\bu - \kappa_h\bu_h,\bv)_{0,\O}\quad\forall (\bv,q)\in\H_h\times\Q_h.
	\end{equation}
	
Thanks to  Lemma \ref{lem:stab-A}, we have that for $(\bu,p)\in\H\times\Q$, there exists $(\bw,r)\in\H\times\Q$, with
	\begin{equation*}
		\label{lem:reliability-eq002}
		\vertiii{(\bw,r)} \lesssim \vertiii{(\bu-{\bu}_h,p-\widehat{p}_h)},
	\end{equation*}
	such that 
	\begin{equation}
		\label{lem:reliability-eq003}
		\vertiii{(\bu-{\bu}_h,p-\widehat{p}_h)}^2  \lesssim \mathcal{A}((\texttt{e}_{\bu},\texttt{e}_p),(\bw,r)).
	\end{equation}
	Let us define $\bw_I\in \H_h$ as the Cl\'ement interpolant (see \cite[Chapter 2]{koelink2006partial}) and $r_I\in \Q_h$ as the standard  $\L^2$-projection of $r$. Then, from \eqref{lem:reliability-eq001} and \eqref{lem:reliability-eq003} we obtain
	 \begin{multline*}
		\label{lem:reliability-eq004}
		\vertiii{(\bu-{\bu}_h,p-p_h)}   \leq\kappa(\bu,\bw)_{0,\O}-\mathcal{A}((\bu_h,p_h);(\bw,r)) \\
		= (\kappa\bu-\kappa_{h}\bu_h,\bw)_{0,\O}+\kappa_h(\bu_h,\bw-\bw_I)_{0,\O} - \mathcal{A}((\bu_{h},p_h);(\bw-\bw_I,r-r_I))  \\
		= \underbrace{-\mathcal{A}((\bu_{h},p_h);(\bw - \bw_I,r - r_I)) + (\kappa_h\bu_h,\bw-\bw_I)_{0,\O}}_{\Lambda_1}
		+\underbrace{ (\kappa\bu - \kappa_h\bu_h,\bw)_{0,\O}}_{\Lambda_2}.
	\end{multline*}
An application of integration by parts together with Cauchy--Schwarz and the approximation results of the Cl\'ement interpolant  gives 
\begin{equation*}
|\Lambda_1|\lesssim \eta \vertiii{(\bw,r)} , \quad
 |\Lambda_2|\lesssim \|\mu_h^{1/2}(\kappa\bu-\kappa_h\bu_h)\|_{0,\O}\vertiii{(\bw,r)}.
\end{equation*}
This completes the proof. Using 
\[(\kappa-\kappa_h)d(\bu_h,\bu_h)= \mathcal{A}((\bu-\bu_h,p-p_h); (\bu-\bu_h,p-p_h))-\kappa d(\bu-\bu_h,\bu-\bu_h)\]
with the above result leads to the second stated result.
\end{proof}
 \subsection{Efficiency}
The analysis for the efficiency follows the classic arguments based on bubble functions.  An interior bubble function $\psi_{K}\in\H_0^1(K)$ for a polytope $K$
can be constructed piecewise as the sum of the  cubic
bubble functions  for each tetrahedron of the
sub-triangulation $\CT_h^{K}$ that attain the value 1 at the barycenter of the tetrahedron. On the other hand, a facet bubble function $\psi_{\ell}$ for
$\ell\in\partial K$ is a piecewise quadratic function
attaining the value 1 at the barycenter of $\ell$ and vanishing
on the polytope $K\in\CT_h$  that do not contain $\ell$
on its boundary.

The following results which establish standard estimates
for bubble functions will be useful in what follows (see \cite{MR1885308,MR3059294}).
\begin{lemma}[Interior bubble functions]
\label{burbujainterior}
For any $K\in \CT_h$, let $\psi_{K}$ be the corresponding interior bubble function.
Then, there holds
\begin{align*}
\|q\|_{0,K}^2&\lesssim \int_{K}\psi_{K} q^2\leq \|q\|_{0,K}^2\qquad \forall q\in \mathbb{P}_k(K),\\
\| q\|_{0,K}&\lesssim \|\psi_{K} q\|_{0,K}+h_K\|\nabla(\psi_{K} q)\|_{0,K}\lesssim \|q\|_{0,K}\qquad \forall q\in \mathbb{P}_k(K),
\end{align*}
where the hidden constants are 
independent of  $h_K$.
\end{lemma}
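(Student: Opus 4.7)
The plan is to combine equivalence of norms on finite-dimensional polynomial spaces with a scaling argument to a reference simplex. Since $\psi_K$ is (piecewise) a fixed polynomial times the standard cubic bubble on each sub-simplex of $\CT_h^K$, I would establish the bounds on a reference element $\widehat{T}$ and transport them to $K$ via an affine pull-back $F_T\colon\widehat{T}\to T$, for which the volume Jacobian $|\mathrm{det}\,DF_T|\sim h_T^d$ factors out identically on both sides of each inequality, so that all constants are determined by shape-regularity and the polynomial degree $k$ alone.

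For the first line, the upper bound $\int_K\psi_K q^2\le\|q\|_{0,K}^2$ is immediate from $0\le\psi_K\le1$. For the lower bound, the map $\widehat{q}\mapsto\bigl(\int_{\widehat{T}}\psi_{\widehat{T}}\,\widehat{q}^{\,2}\bigr)^{1/2}$ defines a norm on the finite-dimensional space $\mathbb{P}_k(\widehat{T})$: indeed, $\psi_{\widehat{T}}>0$ on the open interior, so the integral vanishing on a polynomial forces it to vanish on a set of positive measure and hence identically. Equivalence with the standard $\mathrm{L}^2$ norm on $\widehat{T}$ yields a constant depending only on $k$ and the reference geometry; scaling back to $T$ (then summing over sub-simplices if a genuine sub-triangulation is present) gives $\|q\|_{0,K}^2\lesssim\int_K\psi_K q^2$ with the desired uniformity.

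For the second line, the upper bound follows from $\|\psi_K q\|_{0,K}\le\|q\|_{0,K}$ together with the elementwise inverse inequality $\|\nabla(\psi_K q)\|_{0,T}\lesssim h_T^{-1}\|\psi_K q\|_{0,T}$, valid because $\psi_K q$ is a polynomial of bounded degree on each sub-simplex $T$; summing and using mesh-regularity then gives $h_K\|\nabla(\psi_K q)\|_{0,K}\lesssim\|\psi_K q\|_{0,K}\le\|q\|_{0,K}$. For the lower bound, I would reuse the first line: writing $\|q\|_{0,K}^2\lesssim\int_K\psi_K q^2=(\psi_K q,q)_{0,K}$ and applying Cauchy--Schwarz yields $\|q\|_{0,K}\lesssim\|\psi_K q\|_{0,K}$, and adding the gradient term to the right only enlarges it.

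The main obstacle is purely technical book-keeping: ensuring that every constant obtained via the reference-element norm equivalence depends only on $k$ and shape-regularity, and that the inverse inequalities are invoked on the sub-simplices of $\CT_h^K$ where $\psi_K q$ is a genuine polynomial, rather than naively on $K$ where the gradient is only piecewise defined. Once this accounting is carried out, all inequalities are standard consequences of the reference-to-physical scaling.
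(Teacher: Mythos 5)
Your proof is correct and follows the standard route: the paper does not prove this lemma at all but simply cites the usual a posteriori error estimation references, where precisely your argument is carried out --- positivity of the bubble making $\widehat{q}\mapsto\bigl(\int_{\widehat{T}}\psi_{\widehat{T}}\,\widehat{q}^{\,2}\bigr)^{1/2}$ a norm on $\mathbb{P}_k(\widehat{T})$, equivalence of norms in finite dimensions, affine scaling with the Jacobian cancelling on both sides, and an inverse estimate plus Cauchy--Schwarz for the second line. Your insistence on invoking the inverse inequality on the sub-simplices of $\CT_h^{K}$, where $\psi_{K}q$ is genuinely polynomial, together with shape regularity so that $h_K/h_{T}\lesssim 1$, is exactly the bookkeeping required for the constants to depend only on $k$ and mesh regularity.
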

\begin{lemma}[Facet bubble functions]
\label{burbuja}
For any $K\in \CT_h$ and $\ell\in\CE_{K}$, let $\psi_{\ell}$
be the corresponding facet bubble function. Then, there holds
 \begin{equation*}
\|q\|_{0,\ell}^2\lesssim \int_{\ell}\psi_{\ell} q^2 \leq \|q\|_{0,\ell}^2\qquad
\forall q\in \mathbb{P}_k(\ell).
\end{equation*}
Moreover, for all $q\in\mathbb{P}_k(\ell)$, there exists an extension of  $q\in\mathbb{P}_k(K)$ (again denoted by $q$) such that
 \begin{align*}
h_K^{-1/2}\|\psi_{\ell} q\|_{0,K}+h_K^{1/2}\|\nabla(\psi_{\ell} q)\|_{0,K}&\lesssim \|q\|_{0,\ell},
\end{align*}
where the hidden constants are independent of  $h_K$.
\end{lemma}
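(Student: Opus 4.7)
The plan is to follow the classical scaling approach: transfer everything to a reference element $\hat K$ with canonical facet $\hat\ell$, exploit equivalence of norms on the finite-dimensional space $\mathbb{P}_k$, and then scale back to the physical element using shape-regularity. First I would recall that $\psi_{\hat\ell}$ is strictly positive on the (open) facet $\hat\ell$ and vanishes at its boundary, so the bilinear form $(\hat q_1,\hat q_2)\mapsto\int_{\hat\ell}\psi_{\hat\ell}\hat q_1\hat q_2$ defines an inner product on the finite-dimensional space $\mathbb{P}_k(\hat\ell)$. By equivalence of norms on finite-dimensional spaces, the induced norm is equivalent to $\|\cdot\|_{0,\hat\ell}$, giving $\|\hat q\|_{0,\hat\ell}^2\lesssim \int_{\hat\ell}\psi_{\hat\ell}\hat q^2$, while the opposite inequality $\int_{\hat\ell}\psi_{\hat\ell}\hat q^2\leq \|\hat q\|_{0,\hat\ell}^2$ is immediate from $0\leq\psi_{\hat\ell}\leq 1$. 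Pulling back through the affine map $F_K:\hat K\to K$, with surface Jacobian of order $h_K^{d-1}$, the factors cancel and yield the first claimed equivalence on $\ell$.

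For the extension inequality, I would first extend $\hat q$ from $\hat\ell$ to $\hat K$ as a polynomial that is constant along the direction normal to $\hat\ell$ in the reference configuration; this gives a bounded linear extension $\mathbb{P}_k(\hat\ell)\to\mathbb{P}_k(\hat K)$. Multiplying by $\psi_{\hat\ell}$, which vanishes on $\partial\hat K\setminus\hat\ell$, places $\psi_{\hat\ell}\hat q$ in a finite-dimensional subspace of $\H_0^1(\hat K)$ (with respect to the facets of $\partial\hat K$ that do not contain $\hat\ell$). On this finite-dimensional subspace, the seminorms $\hat q\mapsto \|\psi_{\hat\ell}\hat q\|_{0,\hat K}$ and $\hat q\mapsto \|\hat\nabla(\psi_{\hat\ell}\hat q)\|_{0,\hat K}$ are continuous and therefore bounded by a constant multiple of $\|\hat q\|_{0,\hat\ell}$. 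Applying the standard scalings $\|v\|_{0,K}\sim h_K^{d/2}\|\hat v\|_{0,\hat K}$, $\|\nabla v\|_{0,K}\sim h_K^{d/2-1}\|\hat\nabla\hat v\|_{0,\hat K}$, and $\|v\|_{0,\ell}\sim h_K^{(d-1)/2}\|\hat v\|_{0,\hat\ell}$ then produces precisely the weights $h_K^{-1/2}$ and $h_K^{1/2}$ displayed in the statement.

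The main obstacle, I expect, is not analytic but a matter of careful bookkeeping: one must choose the polynomial extension so that it commutes suitably with $F_K$ and keeps the degree within $\mathbb{P}_k$, and one has to verify that $\psi_{\hat\ell}\hat q$, when pushed forward, indeed vanishes on all facets of $\partial K$ other than $\ell$ so that no interelement contribution arises. Once the reference-element computation is done on the appropriate finite-dimensional subspace, all constants depend only on $k$, on $d$, and on the shape-regularity of $\CT_h$, which yields the stated $h_K$-independence. The proofs in the cited references \cite{MR1885308,MR3059294} proceed along exactly these lines.
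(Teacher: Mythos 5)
Your argument is correct: the paper itself gives no proof of this lemma, simply citing the standard references \cite{MR1885308,MR3059294}, and your reference-element scaling argument (norm equivalence on $\mathbb{P}_k(\hat\ell)$ for the facet estimate, a degree-preserving polynomial extension multiplied by the bubble for the element estimates, then push-forward with the factors $h_K^{d/2}$, $h_K^{d/2-1}$, $h_K^{(d-1)/2}$ under shape regularity) is exactly the classical proof found in those references. The bookkeeping you flag (degree preservation of the extension and vanishing of $\psi_\ell q$ on the other facets) works out as you describe, so no gap remains.
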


Now, we are in a position to establish the efficiency $\eta$, which is stated in the following result.
\begin{lemma}(Efficiency) The following estimate holds 
$$\eta\lesssim \vertiii{(\bu-{\bu}_h,p-p_h)}+\Theta+h.o.t,$$
here the hidden constant is independent of the meshsize, $\nu$, and the discrete solution.
\end{lemma}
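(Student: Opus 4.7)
The plan is to bound each piece of $\eta^2 = \sum_{K}(h_K^2\|\rho_1\textbf{R}_{1,T}\|_{0,K}^2 + \|\rho_2^{1/2}R_{2,T}\|_{0,K}^2) + \sum_{\ell} h_\ell\|\rho_E J_\ell\|_{0,\ell}^2$ separately by the standard bubble-function machinery of Verf\"urth, localising the continuous problem \eqref{def:limit_system_eigen_complete} and exploiting the sharp weights $\rho_1,\rho_2,\rho_E$ so that the bounds land inside the weighted triple norm. For the interior residual I would take $\textbf{v}:=\psi_K\textbf{R}_{1,T}$ (extended by zero) and invoke Lemma~\ref{burbujainterior} to write $\|\textbf{R}_{1,T}\|_{0,K}^2 \lesssim \int_K \textbf{R}_{1,T}\cdot\textbf{v}$. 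Integrating by parts on $K$ (with $\textbf{v}|_{\partial K}=0$), inserting and subtracting $2\mu\beps(\bu)$ so that $2\mu_h\beps(\bu_h)=2\mu\beps(\bu)+2\mu\beps(\bu_h-\bu)-2(\mu-\mu_h)\beps(\bu_h)$, and using the continuous momentum equation $-\bdiv(2\mu\beps(\bu))+\nabla p=\kappa\rho\bu$ tested against $\textbf{v}$, I obtain after Cauchy--Schwarz and the inverse estimate $\|\nabla\textbf{v}\|_{0,K}\lesssim h_K^{-1}\|\textbf{R}_{1,T}\|_{0,K}$, upon multiplication by $h_K\rho_{1,K}=h_K(2\mu_h)^{-1/2}$,
\begin{equation*}
h_K\rho_{1,K}\|\textbf{R}_{1,T}\|_{0,K} \lesssim \|\mu^{1/2}\beps(\bu-\bu_h)\|_{0,K} + \|\mu^{-1/2}(p-p_h)\|_{0,K} + \Theta_K + h_K\rho_{1,K}\|\rho(\kappa\bu-\kappa_h\bu_h)\|_{0,K},
\end{equation*}
where the last term is the announced higher-order contribution, since by Corollary~\ref{rtyuj} and Lemma~\ref{lmm:l2_error} both $|\kappa-\kappa_h|$ and $\|\bu-\bu_h\|_{0,K}$ are of order $h^{2r}$.

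For the divergence residual $R_{2,T}=\vdiv\bu_h+p_h/\lambda$ no bubble is needed: the incompressibility equation $\vdiv\bu + p/\lambda = 0$ gives $R_{2,T}=\vdiv(\bu_h-\bu)-(p-p_h)/\lambda$, and the harmonic-mean weight $\rho_2=[(2\mu_h)^{-1}+\lambda^{-1}]^{-1}\le\min\{2\mu_h,\lambda\}$ immediately yields $\|\rho_2^{1/2}R_{2,T}\|_{0,K} \lesssim \|\mu^{1/2}\beps(\bu-\bu_h)\|_{0,K}+\|\lambda^{-1/2}(p-p_h)\|_{0,K}$. For the jump term I would set $\textbf{v}:=\psi_\ell J_\ell$ extended to the patch $\omega_\ell=K\cup K'$ as in Lemma~\ref{burbuja}, so that $\|J_\ell\|_{0,\ell}^2\lesssim \int_\ell J_\ell\cdot\textbf{v}$. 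Integrating by parts element by element on $\omega_\ell$ and using $\textbf{v}|_{\partial\omega_\ell}=0$ produces
\begin{equation*}
2\int_\ell J_\ell\cdot\textbf{v} = \sum_{T\in\omega_\ell}\left[\int_T(2\mu_h\beps(\bu_h):\beps(\textbf{v}) - p_h\vdiv\textbf{v} - \rho\kappa_h\bu_h\cdot\textbf{v}) + \int_T\textbf{R}_{1,T}\cdot\textbf{v}\right],
\end{equation*}
and again subtracting the continuous equation on $\omega_\ell$, invoking $\|\textbf{v}\|_{0,T}\lesssim h_T^{1/2}\|J_\ell\|_{0,\ell}$ and $\|\nabla\textbf{v}\|_{0,T}\lesssim h_T^{-1/2}\|J_\ell\|_{0,\ell}$, and absorbing the $\textbf{R}_{1,T}$ contribution via Step~1 gives the desired bound for $h_\ell^{1/2}\rho_E\|J_\ell\|_{0,\ell}$ in terms of the local triple-norm error, $\Theta_{\omega_\ell}$, and h.o.t. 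Summing over all elements and facets concludes the proof.

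The main obstacle is to ensure that the weights $\rho_1,\rho_2,\rho_E$, all built from $\mu_h$ and (for $\rho_2$) also from $\lambda$, produce constants in the final inequality that remain bounded independently of $\nu$, including in the incompressible limit $\lambda\uparrow\infty$. This is exactly what the harmonic-mean structure of $\rho_2$ buys us, and why the weighted norm $\vertiii{\cdot}_{\H\times Q}$ with both $\mu^{-1/2}$ and $\lambda^{-1/2}$ pressure contributions is the correct one to close the estimate. A secondary subtlety is tracking the replacement of $\mu$ by its $L^2$-projection $\mu_h$ in the discrete residual, which generates precisely the data-oscillation term $\Theta$, together with the momentum source mismatch $\kappa\bu-\kappa_h\bu_h=(\kappa-\kappa_h)\bu+\kappa_h(\bu-\bu_h)$ that, combined with the $h_K$ prefactor, is higher order with respect to $\vertiii{(\bu-\bu_h,p-p_h)}$ and is therefore collected into the h.o.t.\ symbol.
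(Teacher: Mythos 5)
Your proposal is correct and follows essentially the same route as the paper's proof: interior bubble functions (Lemma~\ref{burbujainterior}) for $\textbf{R}_{1,T}$ with the continuous momentum equation inserted and the $\mu-\mu_h$ mismatch collected into $\Theta$, a direct (bubble-free) estimate of $R_{2,T}$ via the incompressibility equation and the harmonic-mean weight $\rho_2$, facet bubbles (Lemma~\ref{burbuja}) with patchwise integration by parts for $J_\ell$, and the consistency terms $(\kappa-\kappa_h)\bu+\kappa_h(\bu-\bu_h)$ relegated to the higher-order contribution. The only differences are cosmetic (your test functions omit the $h^2\rho^2$ scaling the paper builds in, and you split the eigenvalue mismatch by hand where the paper cites \cite[Theorem 3.2]{MR2473688}), so the argument is the same in substance.
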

\begin{proof}
Also, we define $\bv_T:=\psi_T h_T^2\rho_1^2\textbf{R}_{1,T}$, where $\psi_T$ is the bubble that satisfies  the  properties of Lemma \ref{burbujainterior}. Now we compute a bound for the term $h_T^2\|\rho_1\textbf{R}_{1,T}\|_{0,T}$. To do this task, let us recall that 
the continuous problem satisfies $-\bdiv(2\mu(x)\beps(\bu)) +\nabla p  -
 \rho \kappa \bu=\boldsymbol{0}$. Hence
\begin{multline*}
h_T^2\|\rho_1\textbf{R}_{1,T}\|_{0,T}^2\leq\int_T(\bdiv(2\mu_h(x)\beps(\bu_h)) -\nabla p_h  +
 \rho \kappa_h \bu_h)\cdot\bv_T\\
=\int_T(-\bdiv(2\mu_h(x)\beps(\texttt{e}_{\bu})) +\nabla \texttt{e}_p  +
 \rho \kappa_h \bu_h-\rho\kappa\bu-\bdiv(2 \texttt{e}_\mu(x)\beps({\bu})))\cdot\bv_T\\
=\underbrace{\int_T(-\bdiv(2\mu_h(x)\beps(\texttt{e}_{\bu})) +\nabla \texttt{e}_p -\bdiv(2 \texttt{e}_\mu(x)\beps({\bu})))\cdot\bv_T}_{\textrm{T}_1}+\underbrace{\int_T(\rho\kappa_h\bu_h-\rho\kappa\bu)\cdot\bv_T}_{\textrm{T}_2}.
\end{multline*}
To establish the bound of $\textrm{T}_1$, we apply integration by parts which gives
\begin{multline*}
T_1= \int_T(-\bdiv(2\mu_h(x)\beps(\texttt{e}_{\bu})) +\nabla \texttt{e}_p -\bdiv(2 \texttt{e}_\mu(x)\beps({\bu})))\cdot\bv_T\\
= \int_T((2\mu_h(x)\beps(\texttt{e}_{\bu})) - \texttt{e}_pI + \texttt{e}_\mu(x)\beps({\bu}) )\cdot \nabla \bv_T
\le (\vertiii{(\bu-{\bu}_h,p-p_h)}_T+\Theta_K)h_T\|\rho_1\textbf{R}_{1,T}\|_{0,T}.
\end{multline*}
For the term $\textrm{T}_2$ is enough to follow the proof of \cite[Theorem 3.2]{MR2473688} in order to obtain
\begin{equation*}
\textrm{T}_2\lesssim h_T^2(|\kappa-\kappa_h|+\kappa\|\texttt{e}_{\bu}\|_{0,T})\|\rho_1\textbf{R}_{1,T}\|_{0,T}.
\end{equation*}
Recalling the definition of $R_{2,T}$ and the continuous problem satisfies $\vdiv\bu +1/\lambda \;p =0$, we find 
\begin{equation*}
\|\rho_2^{1/2}R_{2,T}\|_{0,T}=\|\rho_2^{1/2}(\vdiv(\bu-\bu_h +1/\lambda (p-p_h))\|_{0,T}
\lesssim \vertiii{(\bu-{\bu}_h,p-p_h)}_T.
\end{equation*}
 Next, we define $\bv_\ell:=\psi_\ell h_E\rho_E^2[\![{(2\mu_h\beps(\bu_h) -p_h\mathbb{I})\textbf{n}}]\!]$, where $\psi_\ell$ is the bubble that satisfies  the  properties of Lemma \ref{burbuja}. Now we compute a bound for the term $h_E\|\rho_EJ_{\ell}\|_{0,\ell}^2$.  Then, we obtain
 \begin{align}\label{burbuja11}
 h_E\|\rho_EJ_{\ell}\|_{0,\ell}^2&\lesssim ([\![{(2\mu_h\beps(\bu_h) -p_h\mathbb{I})\textbf{n}}]\!], \bv_\ell)_\ell=  ([\![{(2\mu_h\beps(\bu_h)-2\mu\beps(\bu) -(p_h-p)\mathbb{I}})\textbf{n}]\!], \bv_\ell)_\ell
 \end{align}
 Using Integration by parts on $\omega_T$ gives
 \begin{align*}
  ([\![{(2\mu_h\beps(\bu_h)-2\mu\beps(\bu) -(p_h-p)\mathbb{I})\textbf{n}}]\!], \bv_\ell)_\ell=\sum_{T\in\omega_T} &\Big((2\mu\beps(\bu)-2\mu_h(x)\beps({\bu}), \beps(\bv_\ell))_T +( \texttt{e}_p, \vdiv \bv_\ell)_T\\
  &\quad +(\textbf{R}_{1,T}, \bv_\ell)_T
  +( \rho \kappa_h \bu_h- \rho \kappa \bu, \bv_\ell)_T\Big).
 \end{align*}
 Applying Cauchy-Schwarz with Lemma \ref{burbuja} and combining with (\ref{burbuja11}) implies that
 \begin{align*}
 h_E\|\rho_EJ_{\ell}\|_{0,\ell}^2&\lesssim (\vertiii{(\bu-{\bu}_h,p-p_h)}_{\omega_T}+\Theta_{\omega_T}+h.o.t.) h_E^{1/2}\|\rho_EJ_{\ell}\|_{0,\ell}.
 \end{align*}
 Combining the above estimates leads to the the stated efficiency result.

\end{proof}

\section{Numerical results}
\label{sec:numerical-experiments}
This section is devoted to analyze the performance of the proposed finite element method through different experiments, considering different values of $\mu$ and $\lambda$. The aim is to compare the  computationally  the  spectrums of  the Stokes limit and elasticity eigenvalue problems. To do this task, we have implemented a  FEniCS  script  \cite{AlnaesBlechta2015a}  for the codes. The meshes have been constructed using the GMSH software \cite{geuzaine2009gmsh}.

 The convergence rates of the eigenvalues have been  obtained with a standard least square fitting and highly refined meshes.  The Taylor-Hood and mini-element families can be used indistinctively as observed in the theory provided in Section \ref{stab_section}. Taylor-Hood elements will be preferred for the 2D and 3D representations, whereas the adaptive test are performed with the mini-element family.

We also denote by $N$ the mesh refinement level, whereas $\texttt{dof}$ denotes the number of degrees of freedom.  We denote by $\kappa_{h_i}$ the i-th discrete eigenfrequency, and by $\widehat{\kappa}_{h_i}:=\kappa_{h_i}/(1+\nu)$ the unscaled version of $\kappa_{h_i}$.

Hence, we denote the error on the $i$-th eigenvalue by $\err( \widehat{\kappa}_i)$ with 
\begin{equation*}
\err(\widehat{\kappa}_i):=\vert \widehat{\kappa}_{h_i}-\widehat{\kappa}_{i}\vert.
\end{equation*}
We remark that computing $\sqrt{\widehat{\kappa}_{h_i}}$ or $\sqrt{\widehat{\kappa}_{i}}$ gives the corresponding discrete and exact eigenfrequency.

With the aim of assessing the performance of our estimator,  we consider domains   with singularities in two and three dimensions in order to observe the improvement of the convergence rate.  On each adaptive iteration, we use the blue-green marking strategy to refine each $T'\in \CT_{h}$ whose indicator $\eta_{T'}$ satisfies
$$
\eta_{T'}\geq 0.5\max\{\eta_{T}\,:\,T\in\CT_{h} \}.
$$
The effectivity indexes with respect to $\zeta$ and the eigenvalue $\widehat{\kappa_i}$ are defined by
$$
\eff(\widehat{\kappa_i}):=\frac{\err(\widehat{\kappa_i})}{\eta^2}.
$$


\subsection{2D square with constant Lamé parameters}\label{subsec:numerical-experiment-square2D}
In this experiment we test our method on the unit square domain $\Omega:=(0,1)^2$. We borrow the experiment from \cite[Section 6.1]{meddahi2013finite} consisting of a square domain, clamped at the bottom and free on the rest of the boundary. The physical parameters for this case are given by
$$
E=1.44\times 10^{11} \text{Pa}, \quad \rho = 7.7\times 10^3 \text{Kg/m}^3.
$$

The following values of $\nu$, together with their respective Sobolev exponent are potrayed in Table \ref{table:nus-mus-lambdas} (see for instance \cite{grisvard1986problemes}).
\begin{table}[hbt!]
	{\footnotesize
		\centering
		\begin{tabular}{|c|c|c|c|}
			\hline
			\hline
		$\nu$             &  $\mu$         &   $\lambda$ & $r$ \\
		\hline
		0.35  & 7.2000e+10 & 1.6800e+11  & 0.6797\\
		0.49 & 7.2000e+10 & 3.5280e+12 & 0.5999 \\
		0.49999 & 7.2000e+10  & 3.5999e+15 & 0.5947 \\
		0.5 & 7.2000e+10 & $\infty$ & 0.5649 \\
		\hline
		\hline
		\end{tabular}
	\caption{Test \ref{subsec:numerical-experiment-square2D}. Different values of $\nu$, together with the corresponding computed values of $\mu$, $\lambda$ and the Sobolev exponent.}
	\label{table:nus-mus-lambdas}}
\end{table}
Tables \ref{tabla:square-MINI-SALIM} and \ref{tabla:square-TH-SALIM} present the first six lowest order  computed eigenfrequencies compared with those of \cite{meddahi2013finite}. Here, we observe that, in both tables, the convergence rate for the third and fifth mode behaves like $\mathcal{O}(h^r)$, with $r\geq 2$, whereas the rest of the modes converge to the extrapolated values with $2r\geq 1.36$ for $\nu=0.35$, and $2r\geq 1.2$ for the Stokes limit case $\nu\approx0.5$.

We now pass to adaptive refinements. Note that the results provided in Tables \ref{tabla:square-MINI-SALIM} - \ref{tabla:square-TH-SALIM} shows that the first eigenvalue rate of convergence is suboptimal since we have point singularites, related to the change of boundary conditions, in $(x,y)=(0,0)$ and $(x,y)=(1,0)$.  We study the adaptive refinements for the first eigenvalue. Figure \ref{fig:cuadradoAFEMerror} depicts the different convergence behavior for each value of $\nu$. It is clear that the optimal rate of convergence is recovered, behaving like $\mathcal{O}(\texttt{dof}^{-1})\simeq \mathcal{O}(h^2)$. To study the behavior of the estimator, we have computed the values of $\eta^2$ and $\eff(\lambda_1)$ on Figure \ref{fig:cuadradoAFEMeffectivity}. Here, we observe that the estimator behaves like $\mathcal{O}(h^2)$, yielding to a  roughly constant efficiency behavior, which oscillates between $0.16$ and $0.19$ for the selected values of $\nu$.  We also present the adaptive meshes on Figure \ref{fig:squares-2D-adaptive}, where we observe that the refinements are concentrated near the two singular corners. Also, the amount of elements marked for $\nu=0.35$ is significantly higher than those marked in the limit case. This is associated to the additional contribution on the volumetric and jump terms associated to $\lambda$.

\begin{table}[hbt!]
	{\footnotesize
		\begin{center}
			\caption{Test \ref{subsec:numerical-experiment-square2D}. Lowest computed eigenvalues for the mini-element family and different values of $\nu$. }
				\label{tabla:square-MINI-SALIM}
			\begin{tabular}{|c c c c |c| c |c|}
				\hline
				\hline
				$N=20$             &  $N=30$         &   $N=40$         & $N=50$ & Order & $\sqrt{\widehat{\kappa}_{extr}}$ & \cite{meddahi2013finite} \\ 
				\hline
				\multicolumn{6}{c}{$\nu=0.35$} & \\
				\hline
				2967.3574  &  2957.0361  &  2952.7091  &  2950.4117  & 1.53 &  2944.7395  &2944.295 \\
				7372.3677  &  7362.3090  &  7357.8981  &  7355.4940  & 1.41 &  7348.9456  &7348.840 \\
				7908.8464  &  7893.3931  &  7887.7663  &  7885.1085  & 1.94 &  7880.1286  &7880.084 \\
				12831.2888  & 12784.9811  & 12768.5793  & 12760.9513  & 2.01 & 12747.2677  & 12746.802\\
				13137.3479  & 13095.2059  & 13078.7328  & 13070.4722  & 1.73 & 13052.7434  &13051.758 \\
				14968.4108  & 14926.3074  & 14911.1135  & 14903.9436  & 1.96 & 14890.7177  & 14890.114\\
				\hline
				\multicolumn{6}{c}{$\nu=0.49$} & \\
				\hline
				3081.8429  &  3059.1117  &  3048.8806  &  3043.1623  & 1.32 &  3026.3099  & 3025.120\\
				8032.0292  &  7997.6323  &  7981.9373  &  7973.0925  & 1.29 &  7946.4129  & 7945.193\\
				8077.3101  &  8060.6008  &  8054.6502  &  8051.8958  & 2.01 &  8046.9569  &8046.967 \\
				12789.6444  & 12733.3815  & 12709.3393  & 12696.4308  & 1.48 & 12663.2748  &12660.250 \\
				13234.2998  & 13193.7694  & 13179.4864  & 13172.8689  & 2.03 & 13161.1898  &13161.057 \\
				15754.1992  & 15663.4991  & 15628.0896  & 15610.2044  & 1.73 & 15572.0923  &15567.043 \\
				\hline
				\multicolumn{6}{c}{$\nu=0.5$} & \\
				\hline
				3096.0660  &  3071.3635  &  3060.2058  &  3053.9532  & 1.31 &  3035.3659  &3034.018 \\
				8090.3425  &  8052.5697  &  8035.2770  &  8025.5048  & 1.28 &  7995.7653  &7994.348 \\
				8098.9778  &  8081.7157  &  8075.5863  &  8072.7571  & 2.01 &  8067.6493  &8067.720 \\
				12775.5424  & 12716.6767  & 12691.3138  & 12677.6156  & 1.45 & 12641.4334  & 12638.546\\
				13268.7664  & 13228.2525  & 13213.9786  & 13207.3680  & 2.03 & 13195.6921  &13195.563 \\
				15792.0288  & 15697.3630  & 15660.1027  & 15641.1543  & 1.70 & 15599.7755  &15594.866 \\
				\hline
				\hline
			\end{tabular}
	\end{center}}

\end{table}

\begin{table}[hbt!]
	{\footnotesize
		\begin{center}
			\caption{Test \ref{subsec:numerical-experiment-square2D}. Lowest computed eigenvalues for the lowest order Taylor-Hood family and different values of $\nu$. }
			\label{tabla:square-TH-SALIM}
			\begin{tabular}{|c c c c |c| c |c|}
				\hline
				\hline
				$N=20$             &  $N=30$         &   $N=40$         & $N=50$ & Order & $\sqrt{\widehat{\kappa}_{extr}}$ & \cite{meddahi2013finite} \\ 
				\hline
				\multicolumn{6}{c}{$\nu=0.35$} & \\
				\hline
				 2947.9433  &  2946.3921  &  2945.7107  &  2945.3380  & 1.41 &  2944.3200  &2944.295 \\
				7353.0241  &  7351.2280  &  7350.4365  &  7350.0038  & 1.40 &  7348.8078  & 7348.840 \\
				7880.6474  &  7880.4027  &  7880.3240  &  7880.2879  & 2.26 &  7880.2343  & 7880.084\\
				12747.5800  & 12747.4026  & 12747.3597  & 12747.3431  & 3.04 & 12747.3271  & 12746.802 \\
				13059.0437  & 13055.8615  & 13054.5253  & 13053.8104  & 1.53 & 13052.0477  & 13051.758 \\
				14891.6919  & 14891.1322  & 14890.9288  & 14890.8270  & 1.91 & 14890.6395  & 14890.114 \\
				\hline
				\multicolumn{6}{c}{$\nu=0.49$} & \\
				\hline
				3036.8461  &  3032.3233  &  3030.2238  &  3029.0275  & 1.25 &  3025.2710  & 3025.120 \\
				7963.1337  &  7956.1517  &  7952.8860  &  7951.0235  & 1.23 &  7945.0430  & 7945.193\\
				8047.2590  &  8047.1328  &  8047.1054  &  8047.0967  & 3.41 &  8047.0887  &8046.967 \\
				12680.2906  & 12672.3064  & 12668.6743  & 12666.6299  & 1.30 & 12660.4821  &12660.250 \\
				13161.3264  & 13161.1886  & 13161.1630  & 13161.1555  & 3.83 & 13161.1499  & 13161.057\\
				15583.8476  & 15577.3533  & 15574.4389  & 15572.8137  & 1.34 & 15568.0999  & 15567.043\\
				\hline
				\multicolumn{6}{c}{$\nu=0.5$} & \\
				\hline
				3046.7873  &  3041.8988  &  3039.6212  &  3038.3196  & 1.24 &  3034.1970 &3034.018  \\
				8014.1126  &  8006.4753  &  8002.8886  &  8000.8375  & 1.22 &  7994.1937  & 7994.348\\
				8068.0073  &  8067.8864  &  8067.8616  &  8067.8543  & 3.59 &  8067.8479  &8067.720 \\
				12660.2310  & 12651.6456  & 12647.7223  & 12645.5075  & 1.29 & 12638.7963  &12638.546 \\
				13195.8533  & 13195.7132  & 13195.6869  & 13195.6789  & 3.78 & 13195.6730  & 13195.563 \\
				15613.2772  & 15606.0211  & 15602.7442  & 15600.9105  & 1.33 & 15595.5636  & 15594.866\\
				\hline
				\hline
			\end{tabular}
	\end{center}
	}
\end{table}

\begin{figure}[hbt!]
	\centering
	\includegraphics[scale=0.45]{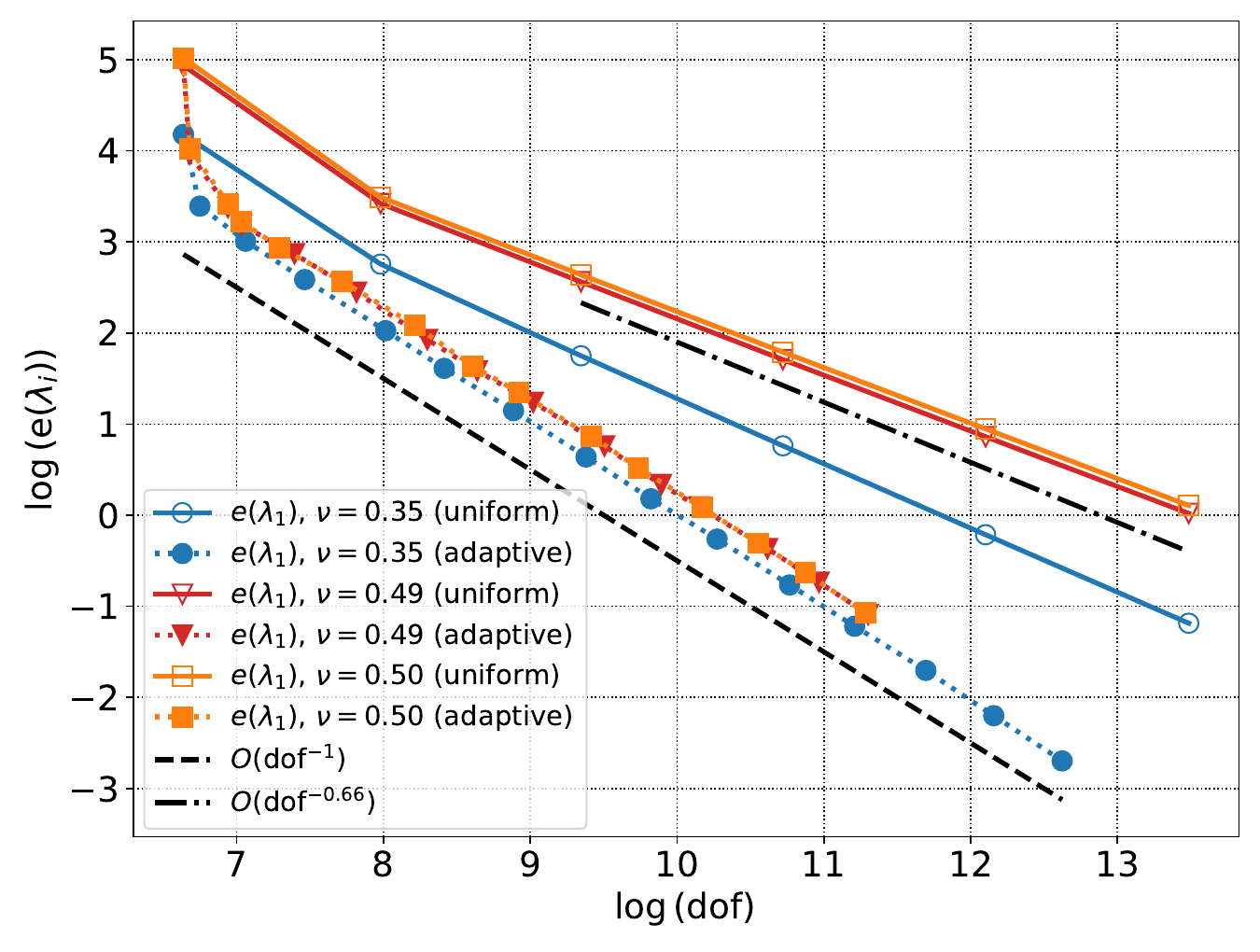}
	\caption{Test \ref{subsec:numerical-experiment-square2D}.  Error curves obtained from the adaptive algorithm in the bottom-clamped square domain compared with the lines $\mathcal{O}(\texttt{dof}^{-0.66})$ and $\mathcal{O}(\texttt{dof}^{-1})$.}
	\label{fig:cuadradoAFEMerror}
\end{figure}

\begin{figure}[hbt!]
	\centering
	\begin{minipage}{0.49\linewidth}\centering
	\includegraphics[scale=0.45]{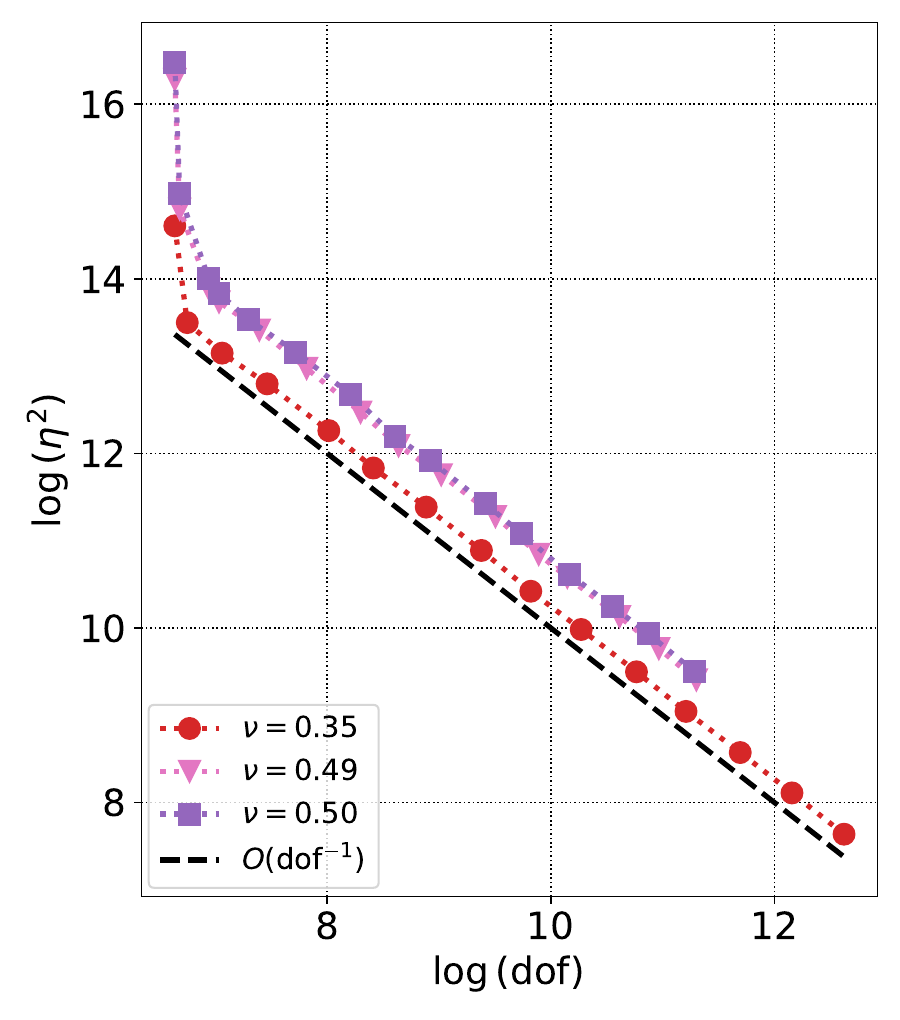}
\end{minipage}
\begin{minipage}{0.49\linewidth}\centering
	\includegraphics[scale=0.45]{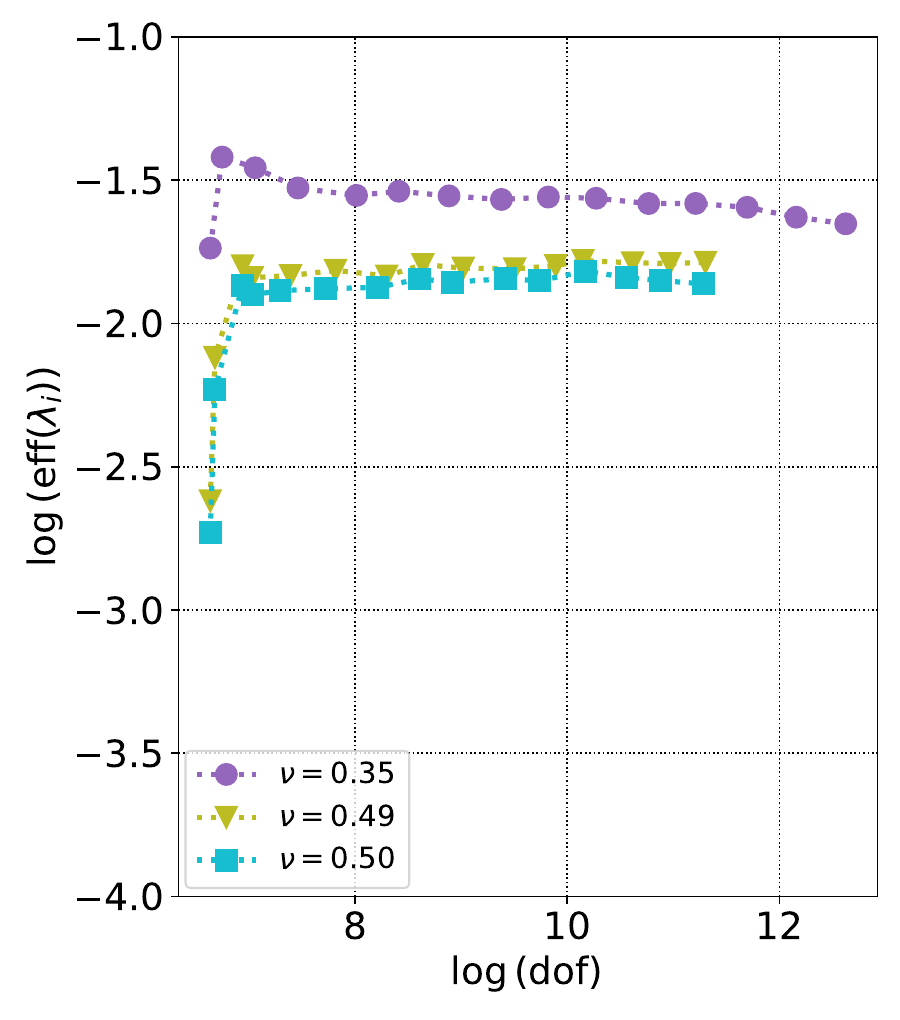}
\end{minipage}
	\caption{Test \ref{subsec:numerical-experiment-square2D}.  Estimator and efficiency curves obtained from the adaptive algorithm in the bottom-clamped square domain for different values of $\nu$.}
	\label{fig:cuadradoAFEMeffectivity}
\end{figure}

\begin{figure}[hbt!]
	\centering
	\begin{minipage}{0.32\linewidth}
		\centering
		\includegraphics[scale=0.07,trim=41cm 2cm 41cm 2cm,clip]{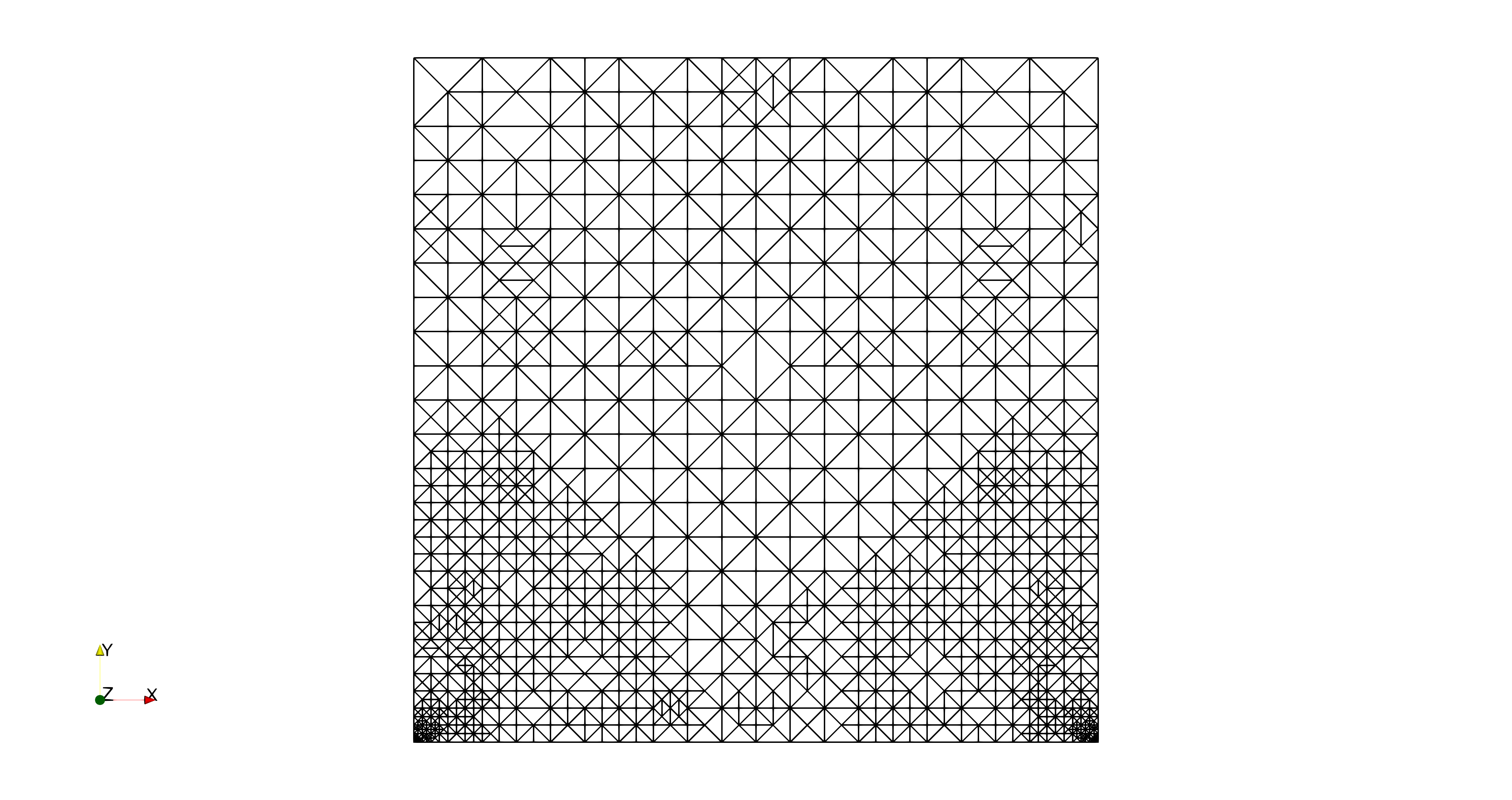}\\
	\end{minipage}
	\begin{minipage}{0.32\linewidth}
		\centering
		\includegraphics[scale=0.07,trim=41cm 2cm 41cm 2cm,clip]{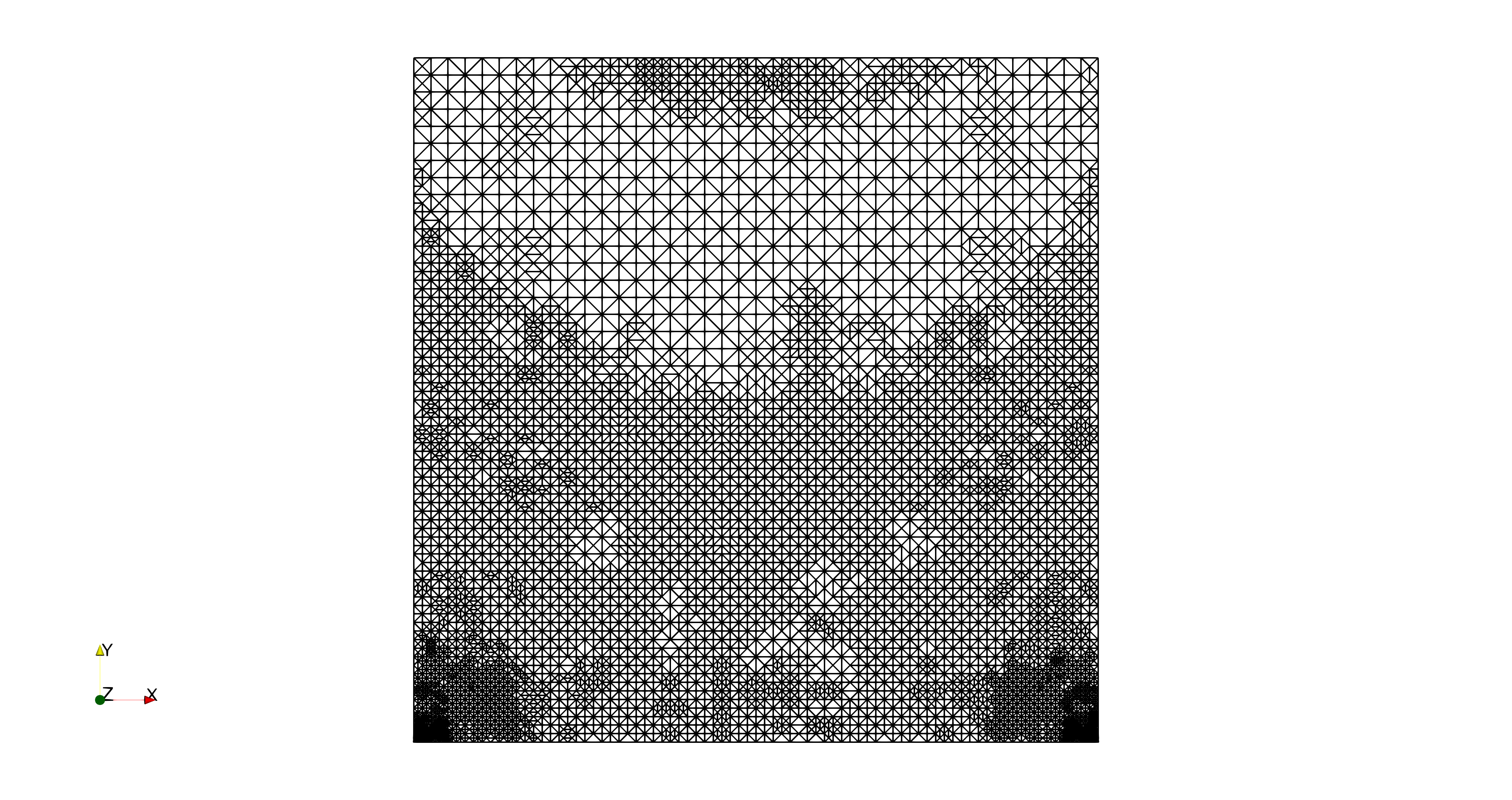}\\
	\end{minipage}
	\begin{minipage}{0.32\linewidth}
		\centering
		\includegraphics[scale=0.07,trim=41cm 2cm 41cm 2cm,clip]{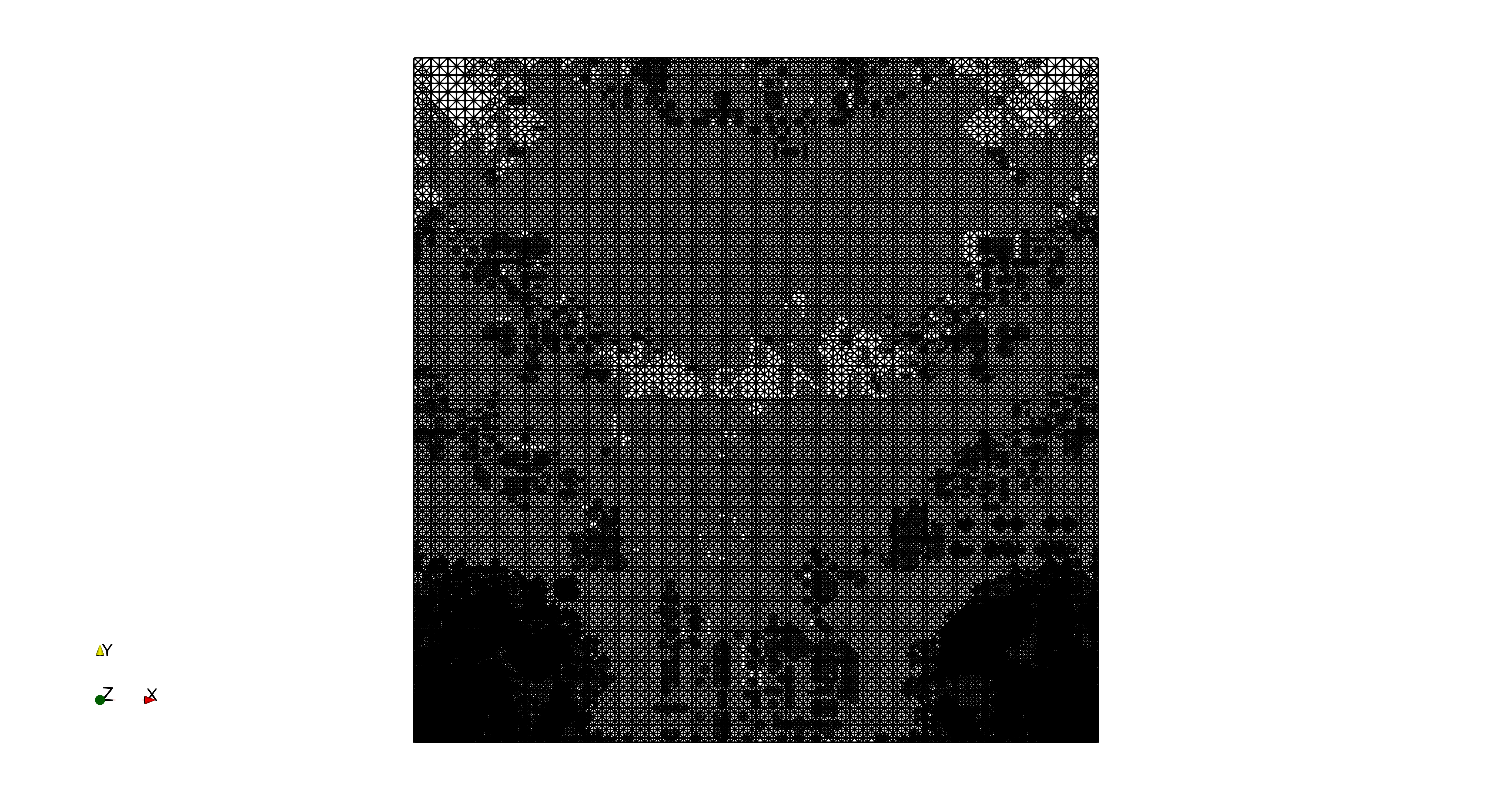}\\
	\end{minipage}\\
	\begin{minipage}{0.32\linewidth}
		\centering
		\includegraphics[scale=0.07,trim=41cm 2cm 41cm 2cm,clip]{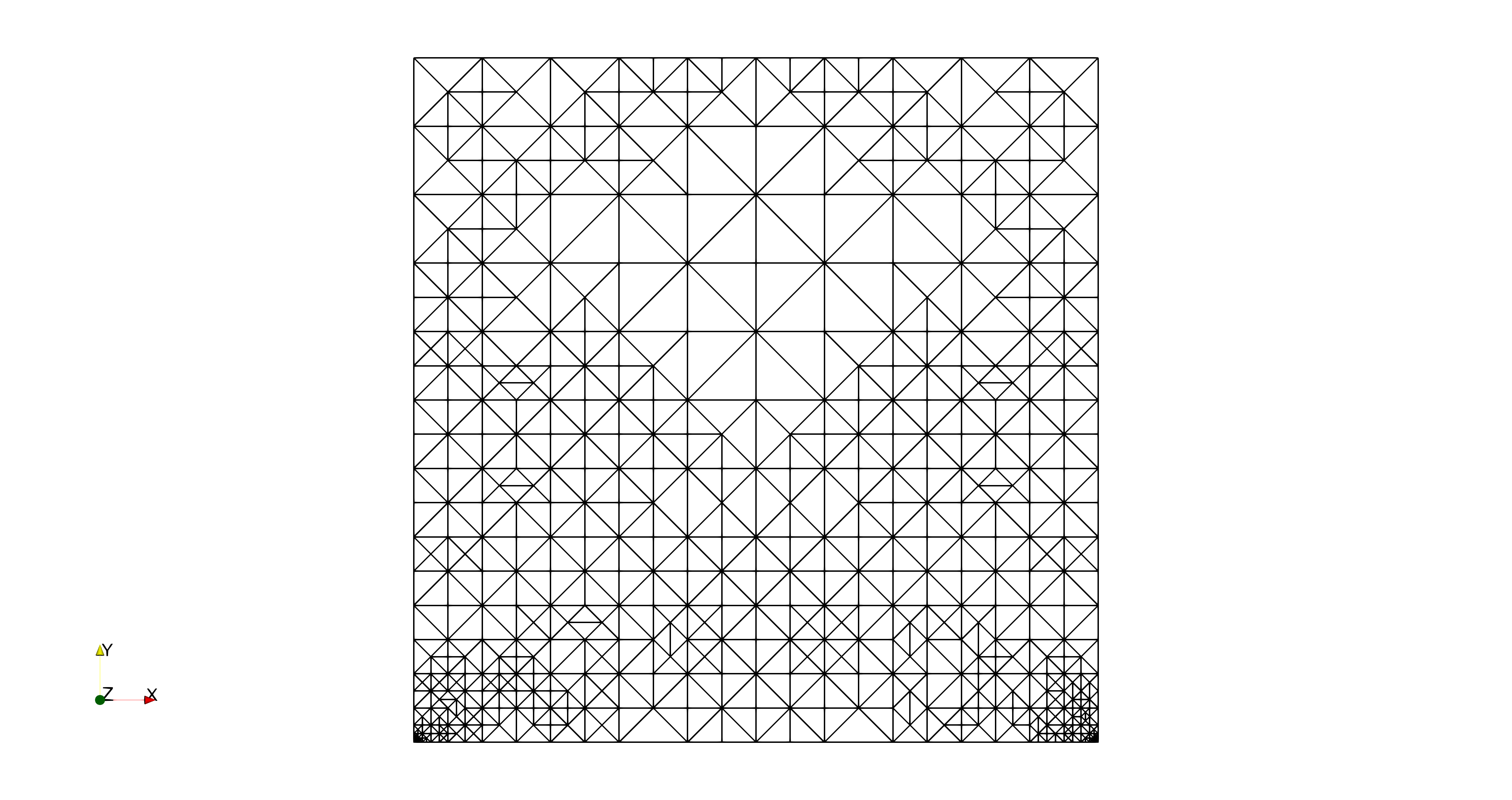}\\
	\end{minipage}
	\begin{minipage}{0.32\linewidth}
		\centering
		\includegraphics[scale=0.07,trim=41cm 2cm 41cm 2cm,clip]{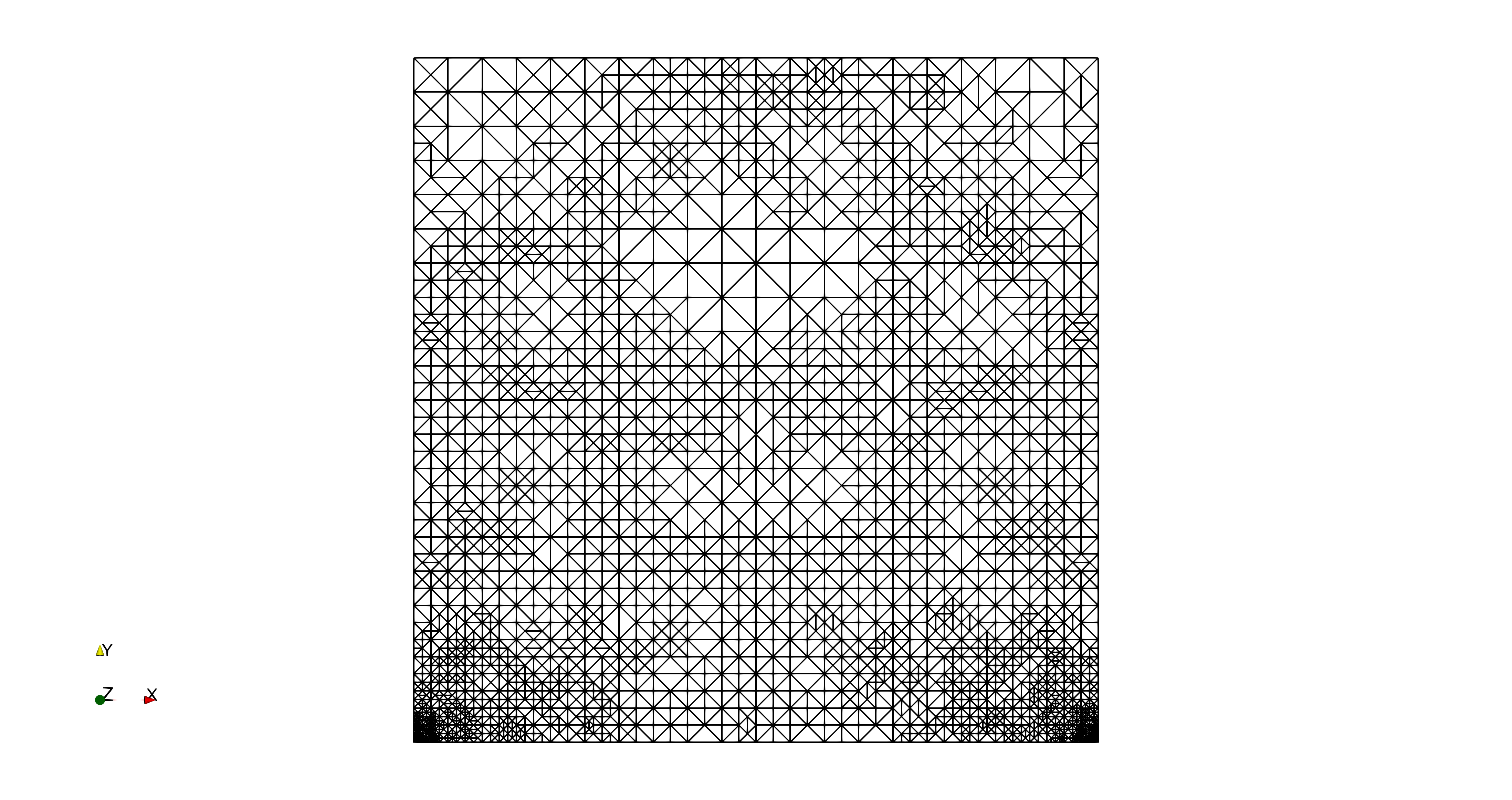}\\
	\end{minipage}
	\begin{minipage}{0.32\linewidth}
		\centering
		\includegraphics[scale=0.07,trim=41cm 2cm 41cm 2cm,clip]{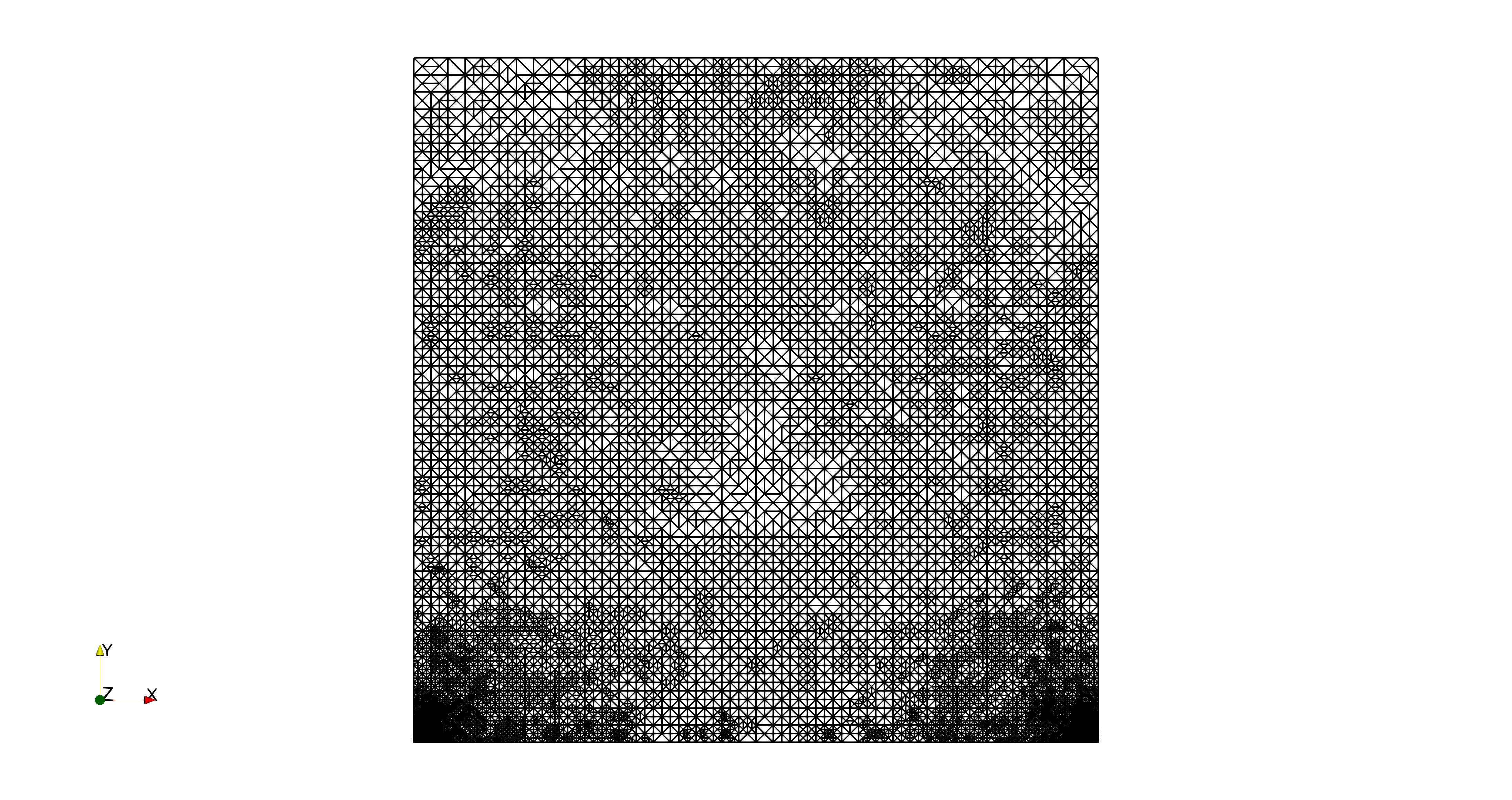}\\
	\end{minipage}
	\caption{Test \ref{subsec:numerical-experiment-square2D}. Intermediate meshes of the unit square domain obtained with the adaptive algorithm and different values of $\nu$. Top row: Meshes with 7229, 47399 and 303396 dofs with $\eta=0.35$. Bottom row: Meshes with 3686, 16917  and 79705 dofs with $\eta=0.5$. }
	\label{fig:squares-2D-adaptive}
\end{figure}

\subsection{2D square with discontinuous Lamé parameters}\label{subsec:numerical-experiment-square2D-variablenu}
In this experiment we again consider the unit square $\Omega:=(0,1)^2$, but this time we take a Young's modulus such that the Lamé parameters are discontinuous. More precisely, we subdivide $\Omega$ into three regions $\Omega:=\Omega_1\cup\Omega_2\cup\Omega_3$, given by,
$$\Omega_1:=(0,1/3)\times (0,1),\quad \Omega_2:=(1/3,2/3)\times (0,1),\quad \Omega_3:=(2/3,1)\times (0,1),$$
such that the Young's modulus is defined by
\begin{equation*}
E(x)=\left\{
\begin{aligned}
	&2, &\text{ if } x\in\Omega_1,\\
	&1, &\text{ if } x\in\Omega_2,\\
	&3, &\text{ if } x\in\Omega_3.\\
\end{aligned}
\right.
\end{equation*}
The material density is assumed to be 1. This implies that both $\mu$ and $\lambda$ are discontinuous functions. In Figure \ref{fig:cuad-div}  we can observe the division of domain $\Omega$, and the triangulation for this case. Here, the mesh size is given by $h\approx 1/N$, where $N$ is the mesh level.
\begin{figure}[hbt!]
	\centering
	\includegraphics[scale=0.07,trim=35cm 0cm 35cm 0cm,clip]{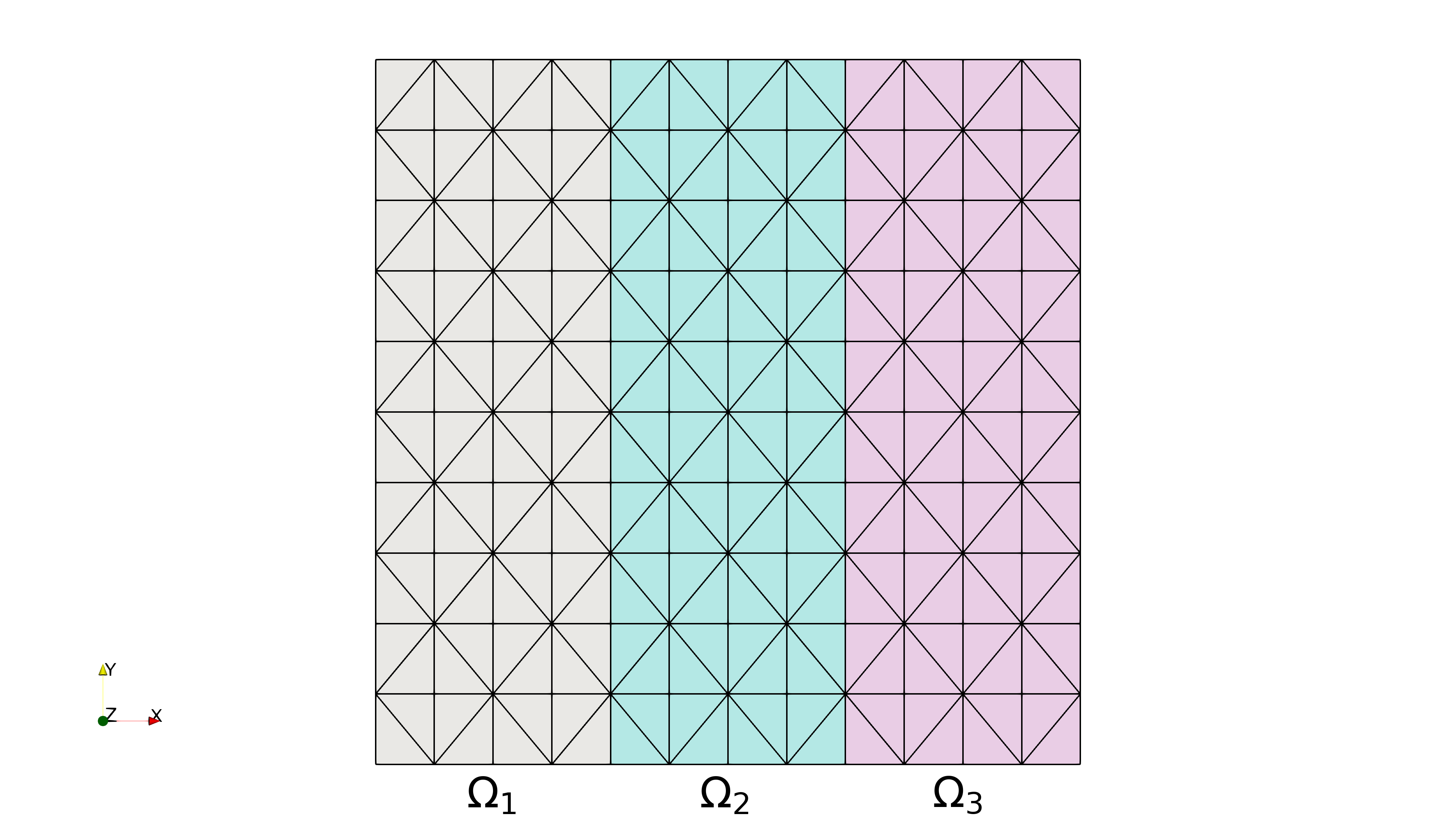}
	\caption{Test \ref{subsec:numerical-experiment-square2D-variablenu}. Sample mesh of the domain $\Omega$ subdivided in $\Omega_i$, $i=1,2,3$ subdomains.}
	\label{fig:cuad-div}
\end{figure}

Table \ref{tabla:square-MINI-constant-discontinuous-E} presents the result for the mini-element family. Here, we observe that a quadratic rate of convergence is attained for all the considered values of $\nu=0.35$.  However, for the other cases, the first eigenmode is affected by the pressure discontinuity and behaves like $\mathcal{O}(h^{1.7})$. This contrast with the results when using the Taylor-hood family, where a quadratic rate is observe for the first eigenmode in all cases. However, the fifth eigenmode also behaves like $\mathcal{O}(h^{1.7})$. Since we are dealing with discontinuous Lamé parameters, Taylor-Hood elements are not capable of giving more than $\mathcal{O}(h^2)$. We also present the eigenmodes for the velocity fields and pressure modes in Figures \ref{fig:velocities-nu035} and \ref{fig:velocities-nu05} for $\nu=0.35$ and $\nu=0.49999$, respectively. It is observed that the flow distributes according to the values of $E$, showing high velocities around the center of the geometry. In order to observe the pressure jumps, we have projected the pressure into the lowest discontinuous Lagrange family of elements $\mathbb{P}_0(\CT_h)$. One can also compute the respective 2D contour plot.
\begin{table}[hbt!]
	{\footnotesize
		\begin{center}
			\caption{Test \ref{subsec:numerical-experiment-square2D-variablenu}. Lowest computed eigenvalues using the mini-element family for different values of $\nu$ and discontinuous $\mu$ and $\lambda$. }
			\label{tabla:square-MINI-constant-discontinuous-E}
			\begin{tabular}{|c c c c |c| c |}
				\hline
				\hline
				$N=20$             &  $N=30$         &   $N=40$         & $N=50$ & Order & $\sqrt{\widehat{\kappa}_{extr}}$  \\ 
				\hline
				\multicolumn{6}{c}{$\nu=0.35$}  \\
				\hline
				5.2021  &     5.1924  &     5.1883  &     5.1869  & 2.01 &     5.1838  \\
				6.0415  &     6.0167  &     6.0063  &     6.0023  & 2.00 &     5.9943  \\
				6.0956  &     6.0851  &     6.0806  &     6.0788  & 1.91 &     6.0751  \\
				7.8128  &     7.7540  &     7.7310  &     7.7219  & 2.12 &     7.7057  \\
				7.8784  &     7.8445  &     7.8305  &     7.8252  & 2.04 &     7.8148  \\
				\hline
				\multicolumn{6}{c}{$\nu=0.49$}  \\
				\hline
				5.8413  &     5.8145  &     5.8025  &     5.7974  & 1.75 &     5.7857  \\
				7.0706  &     7.0224  &     7.0016  &     6.9937  & 1.93 &     6.9767  \\
				7.3501  &     7.2883  &     7.2630  &     7.2519  & 1.95 &     7.2311  \\
				8.6728  &     8.5649  &     8.5216  &     8.5036  & 2.03 &     8.4706  \\
				9.0578  &     8.9419  &     8.8960  &     8.8759  & 2.02 &     8.8403  \\
				\hline
				\multicolumn{6}{c}{$\nu=0.49999$}  \\
				\hline
				5.8308  &     5.8037  &     5.7912  &     5.7859  & 1.70 &     5.7732  \\
				7.1643  &     7.1124  &     7.0902  &     7.0818  & 1.96 &     7.0641  \\
				7.3614  &     7.2979  &     7.2716  &     7.2599  & 1.91 &     7.2375  \\
				8.7113  &     8.5990  &     8.5540  &     8.5352  & 2.03 &     8.5009  \\
				9.0746  &     8.9554  &     8.9079  &     8.8868  & 2.00 &     8.8494  \\
				\hline
				\hline
			\end{tabular}
	\end{center}}
	
\end{table}
\begin{table}[hbt!]
	{\footnotesize
		\begin{center}
			\caption{Test \ref{subsec:numerical-experiment-square2D-variablenu}. Lowest computed eigenvalues using the lowest order Taylor-Hood family for different values of $\nu$ and discontinuous $\mu$ and $\lambda$. }
			\label{tabla:square-TH-constant-discontinuous-E}
			\begin{tabular}{|c c c c |c| c| }
				\hline
				\hline
				$N=20$             &  $N=30$         &   $N=40$         & $N=50$ & Order & $\sqrt{\widehat{\kappa}_{extr}}$  \\ 
				\hline
				\multicolumn{6}{c}{$\nu=0.35$}  \\
				\hline
				5.1820  &     5.1836  &     5.1842  &     5.1844  & 2.20 &     5.1848  \\
				5.9934  &     5.9945  &     5.9949  &     5.9951  & 2.28 &     5.9953  \\
				6.0687  &     6.0723  &     6.0739  &     6.0745  & 1.84 &     6.0759  \\
				7.6973  &     7.7013  &     7.7030  &     7.7037  & 1.96 &     7.7050  \\
				7.8064  &     7.8111  &     7.8131  &     7.8139  & 1.88 &     7.8157  \\
				\hline
				\multicolumn{6}{c}{$\nu=0.49$}  \\
				\hline
				 5.7859  &     5.7880  &     5.7888  &     5.7890  & 2.28 &     5.7895  \\
				 6.9730  &     6.9762  &     6.9774  &     6.9779  & 2.20 &     6.9787  \\
				 7.2263  &     7.2303  &     7.2320  &     7.2326  & 1.99 &     7.2340  \\
				 8.4596  &     8.4659  &     8.4685  &     8.4696  & 2.01 &     8.4716  \\
				 8.8327  &     8.8374  &     8.8396  &     8.8405  & 1.71 &     8.8427  \\
				\hline
				\multicolumn{6}{c}{$\nu=0.49999$}  \\
				\hline
				5.7740  &     5.7761  &     5.7770  &     5.7772  & 2.28 &     5.7777  \\
				7.0608  &     7.0636  &     7.0647  &     7.0651  & 2.18 &     7.0658  \\
				7.2336  &     7.2377  &     7.2394  &     7.2400  & 2.01 &     7.2413  \\
				8.4901  &     8.4964  &     8.4990  &     8.5000  & 2.01 &     8.5020  \\
				8.8426  &     8.8473  &     8.8496  &     8.8504  & 1.71 &     8.8526  \\
				\hline
				\hline
			\end{tabular}
	\end{center}}
	
\end{table}
\begin{figure}[hbt!]
\centering
\begin{minipage}{0.49\linewidth}
	\centering
	\includegraphics[scale=0.07,trim=35cm 10cm 35cm 10cm,clip]{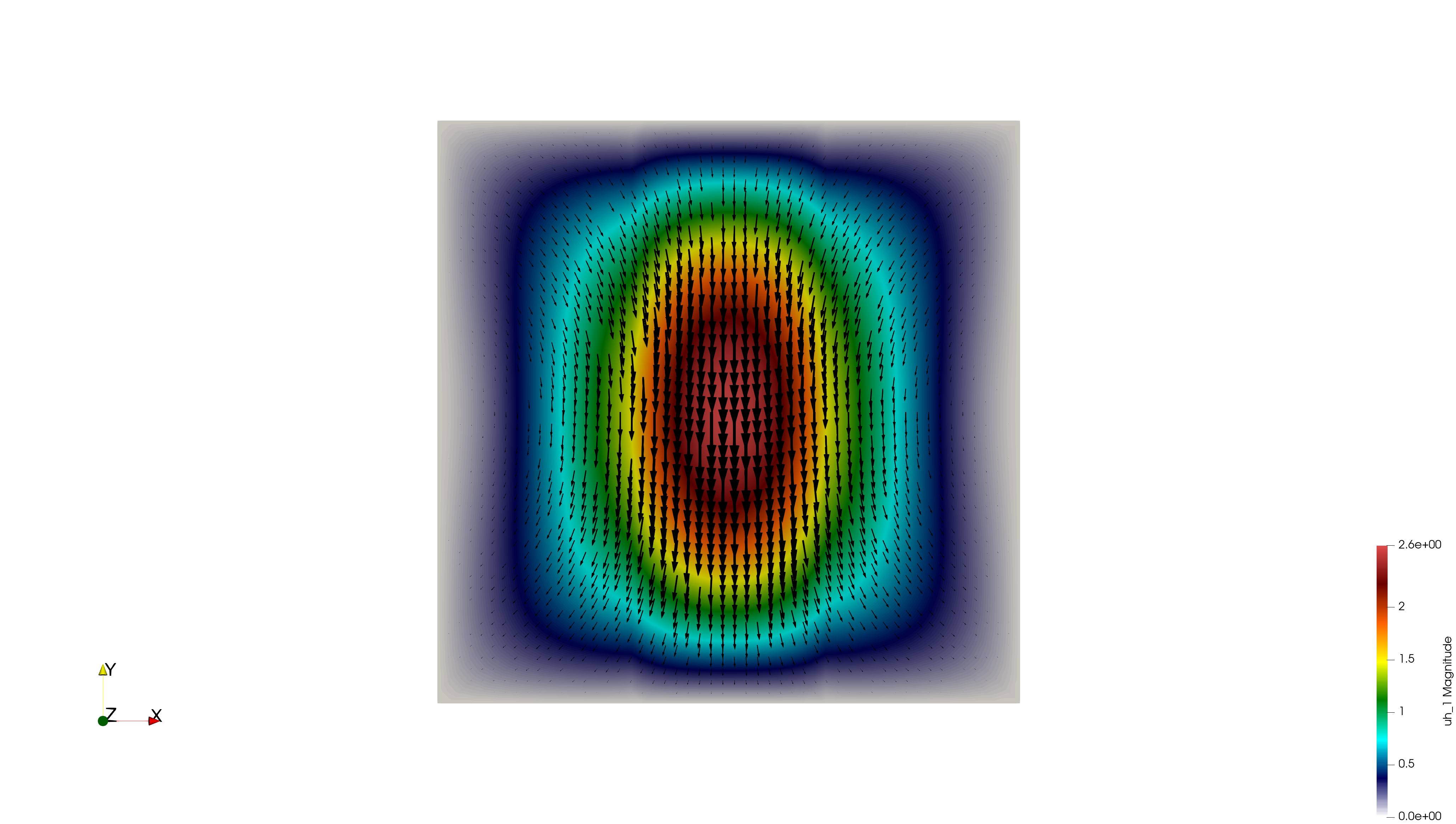}\\
	{\footnotesize $\bu_{1,h}$}
\end{minipage}
\begin{minipage}{0.49\linewidth}
	\centering
	\includegraphics[scale=0.07,trim=35cm 10cm 35cm 10cm,clip]{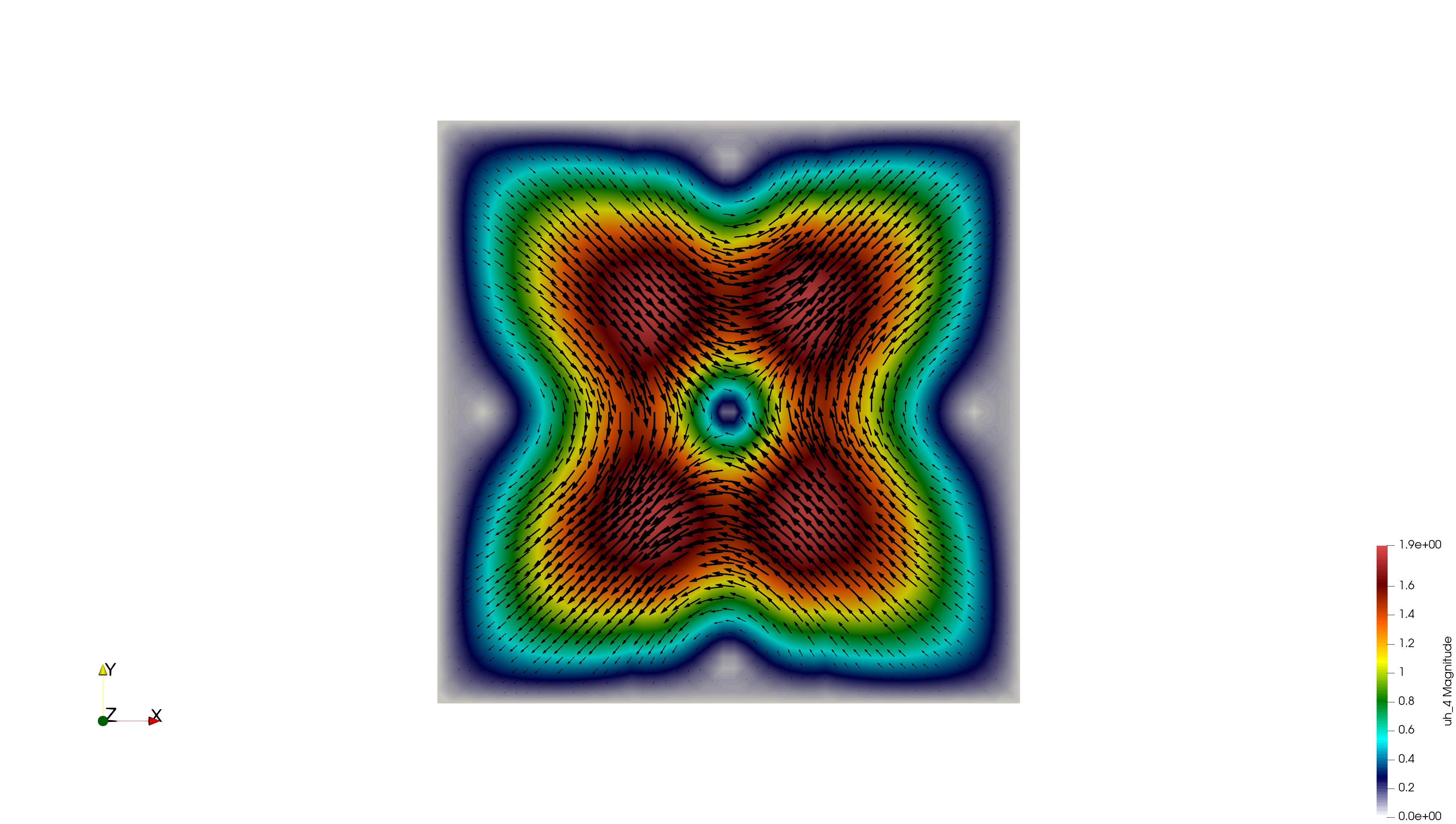}\\
	{\footnotesize $\bu_{4,h}$}
\end{minipage}\\
\begin{minipage}{0.49\linewidth}
	\centering
	\includegraphics[scale=0.07,trim=35cm 10cm 35cm 10cm,clip]{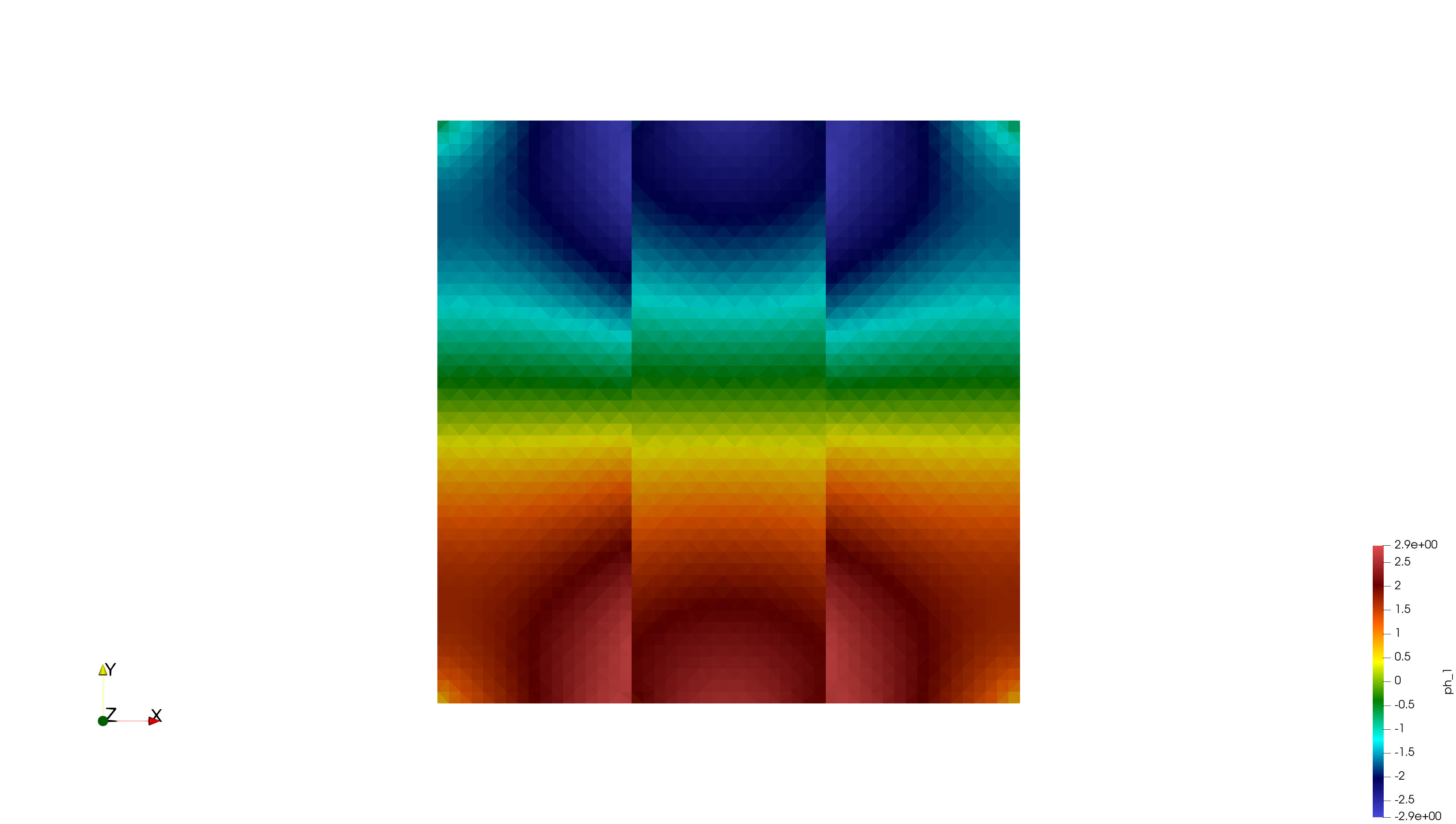}\\
	{\footnotesize $p_{1,h}$}
\end{minipage}
\begin{minipage}{0.49\linewidth}
	\centering
	\includegraphics[scale=0.07,trim=35cm 10cm 35cm 10cm,clip]{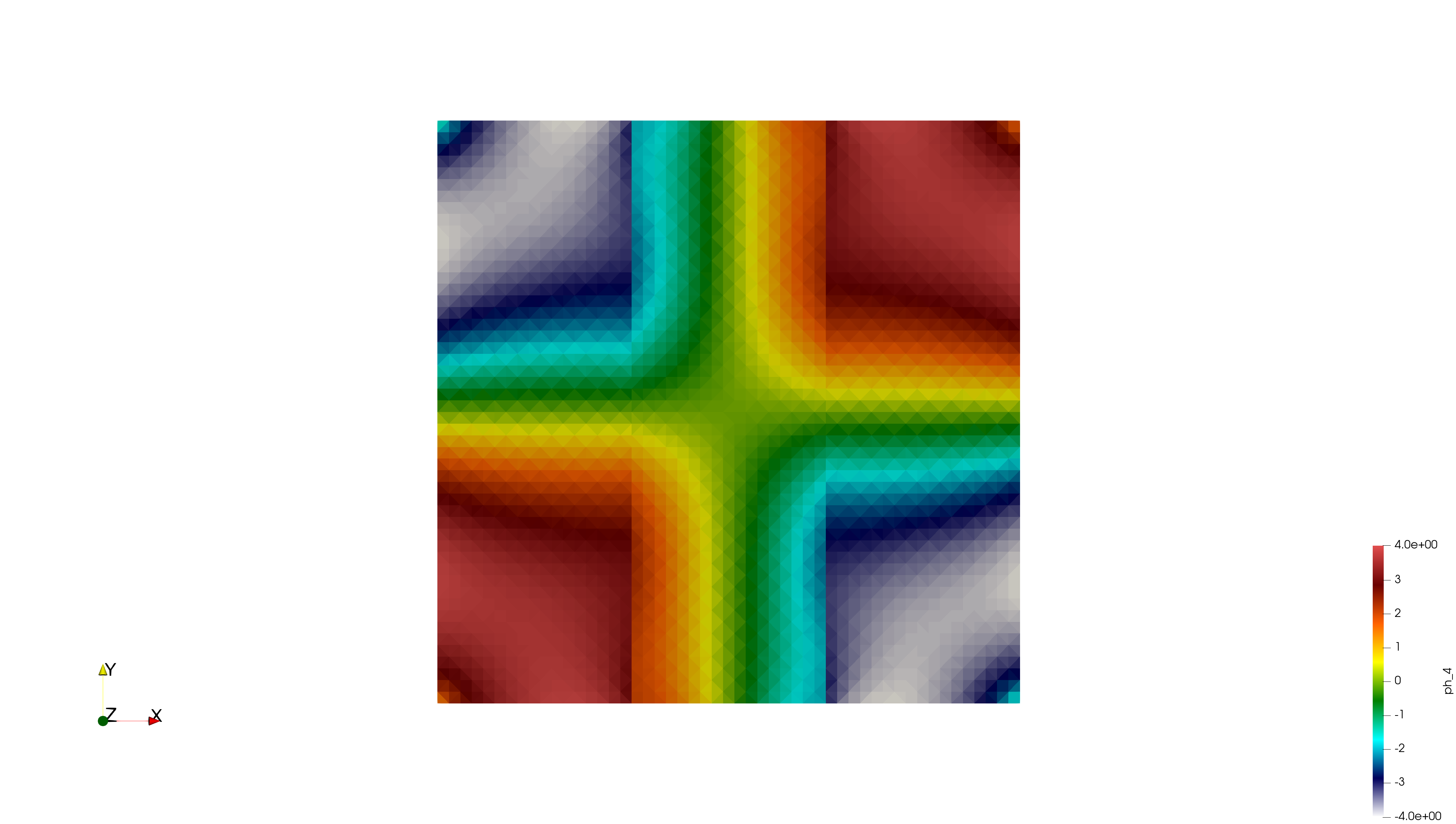}\\
	{\footnotesize $p_{4,h}$}
\end{minipage}
\caption{Test \ref{subsec:numerical-experiment-square2D-variablenu}. Displacement/velocity fields and pressure surface plot for the first and fourth eigenmodes for $\nu=0.35$ and discontinuous $\mu$ and $\lambda$.}
\label{fig:velocities-nu035}
\end{figure}
\begin{figure}[hbt!]
	\centering
	\begin{minipage}{0.49\linewidth}
		\centering
		\includegraphics[scale=0.07,trim=35cm 10cm 35cm 10cm,clip]{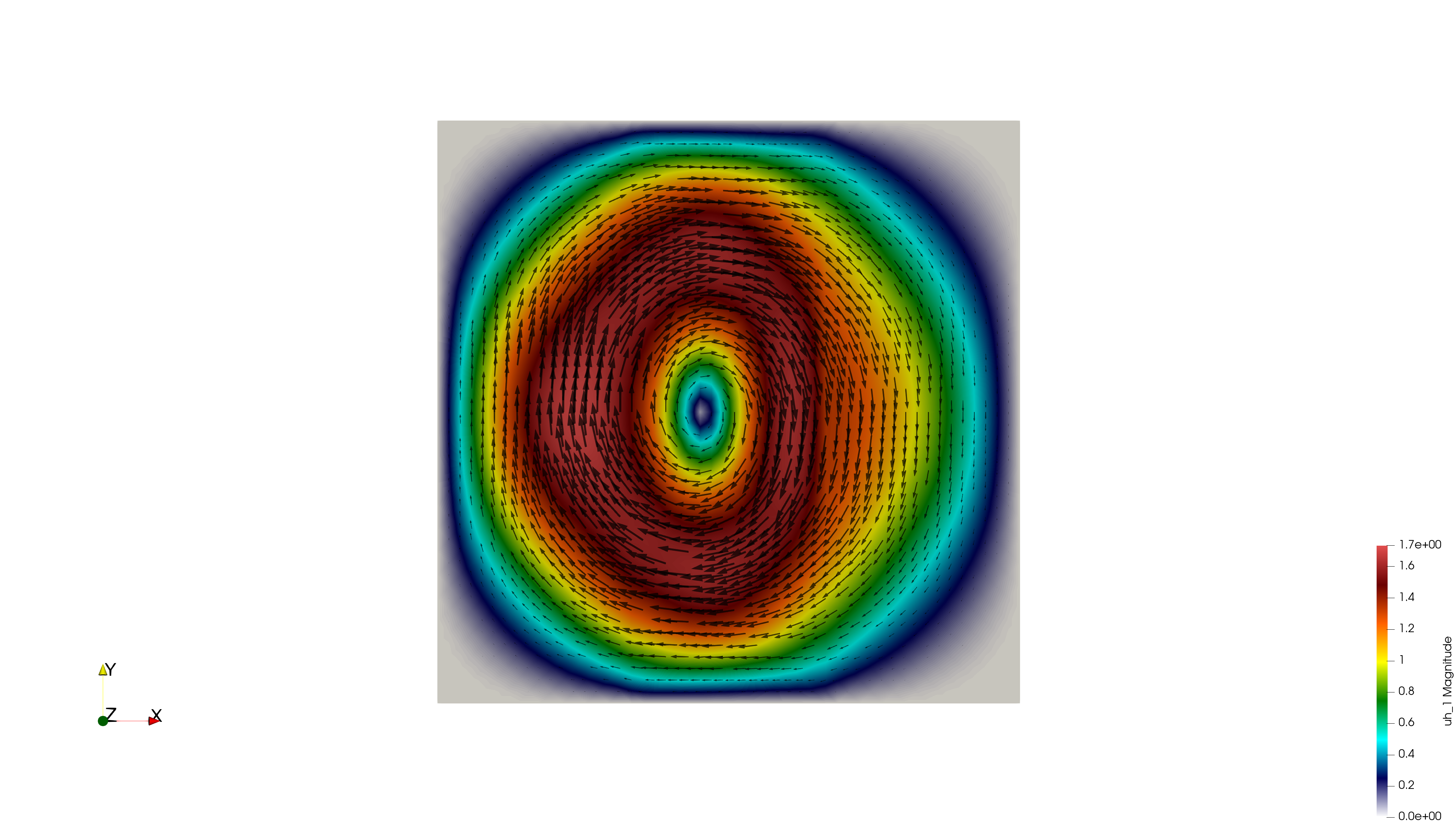}\\
		{\footnotesize $\bu_{1,h}$}
	\end{minipage}
	\begin{minipage}{0.49\linewidth}
		\centering
		\includegraphics[scale=0.07,trim=35cm 10cm 35cm 10cm,clip]{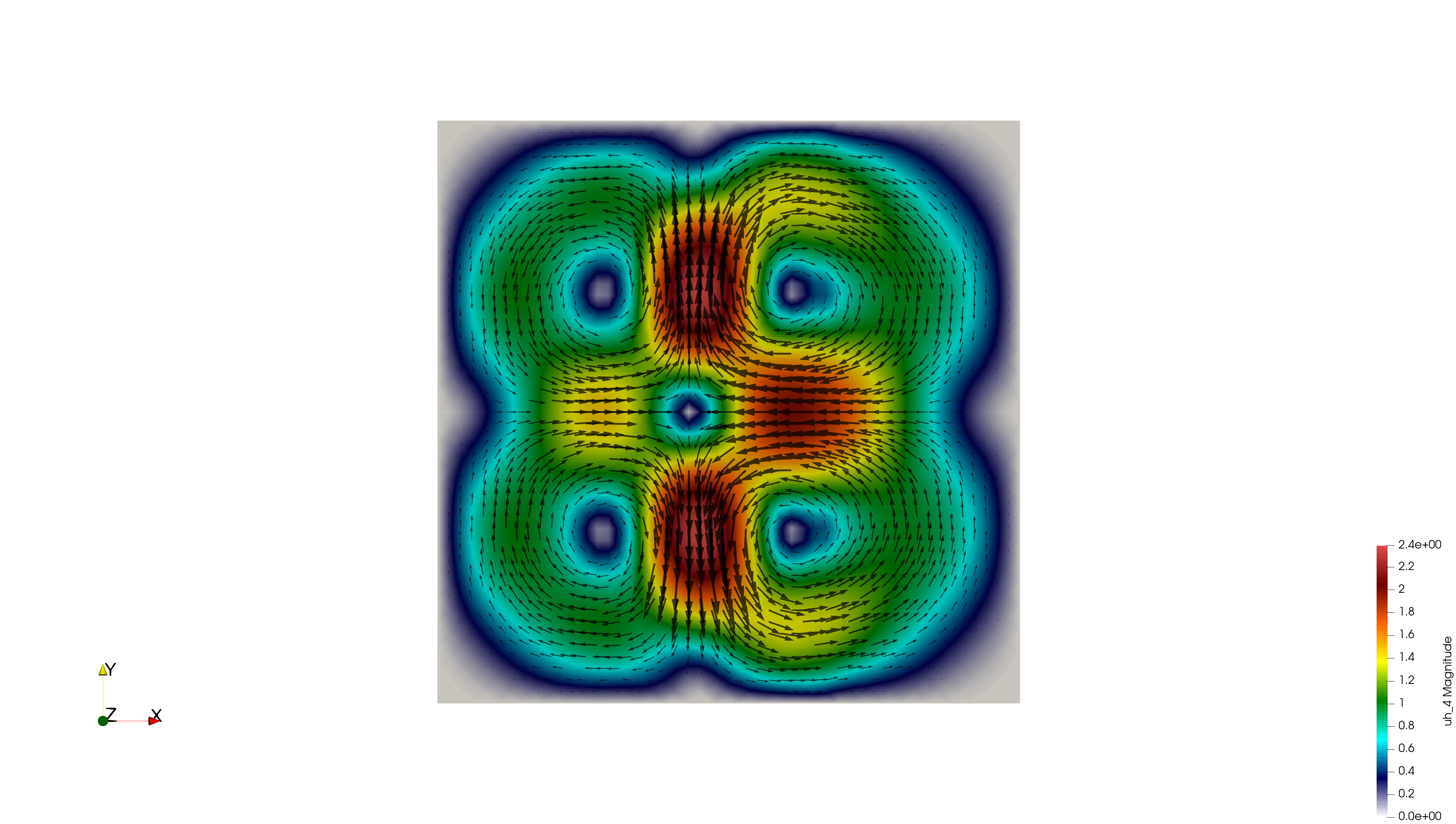}\\
		{\footnotesize $\bu_{4,h}$}
	\end{minipage}\\
	\begin{minipage}{0.49\linewidth}
		\centering
		\includegraphics[scale=0.07,trim=35cm 10cm 35cm 10cm,clip]{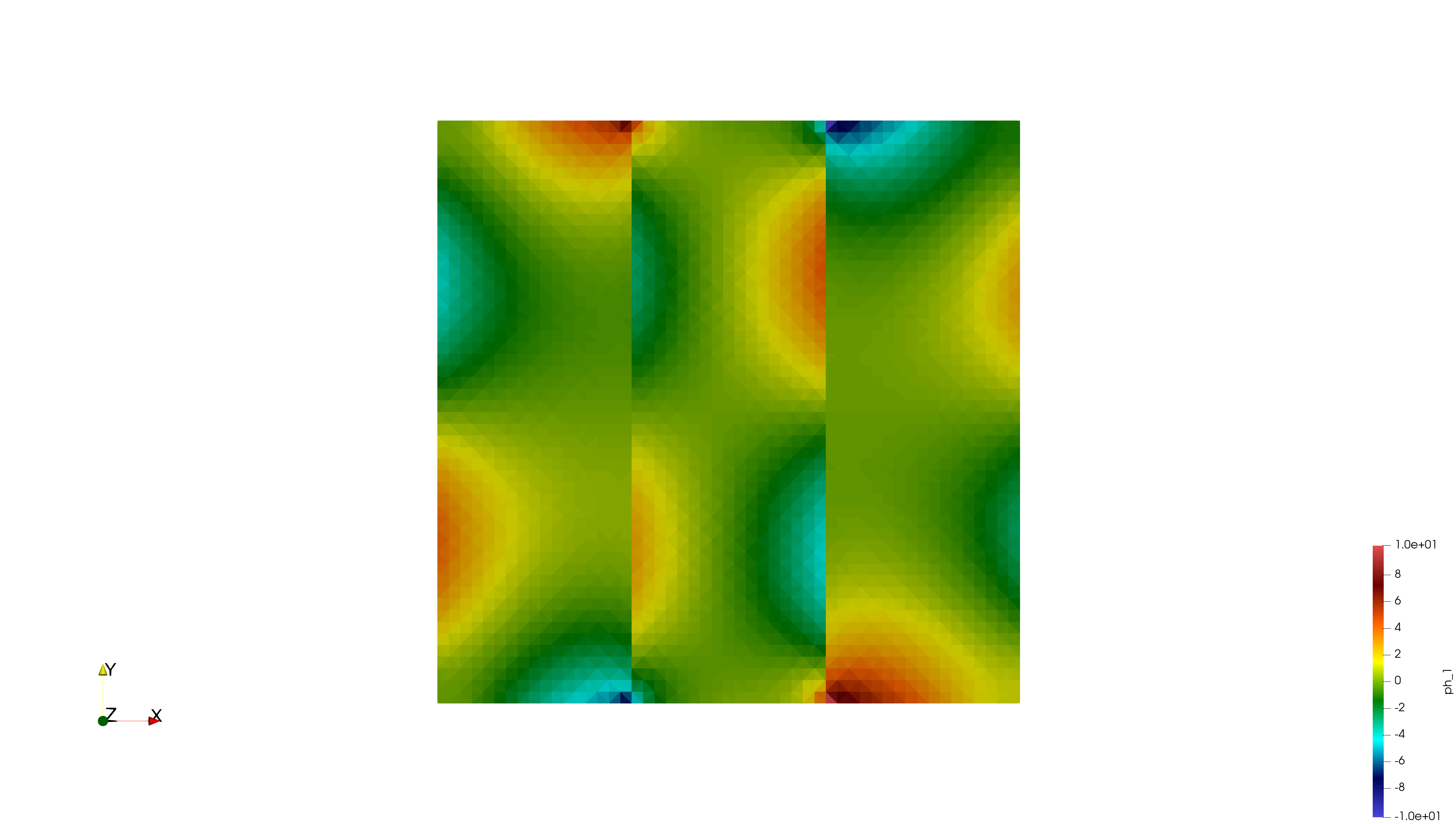}\\
		{\footnotesize $p_{1,h}$}
	\end{minipage}
	\begin{minipage}{0.49\linewidth}
		\centering
		\includegraphics[scale=0.07,trim=35cm 10cm 35cm 10cm,clip]{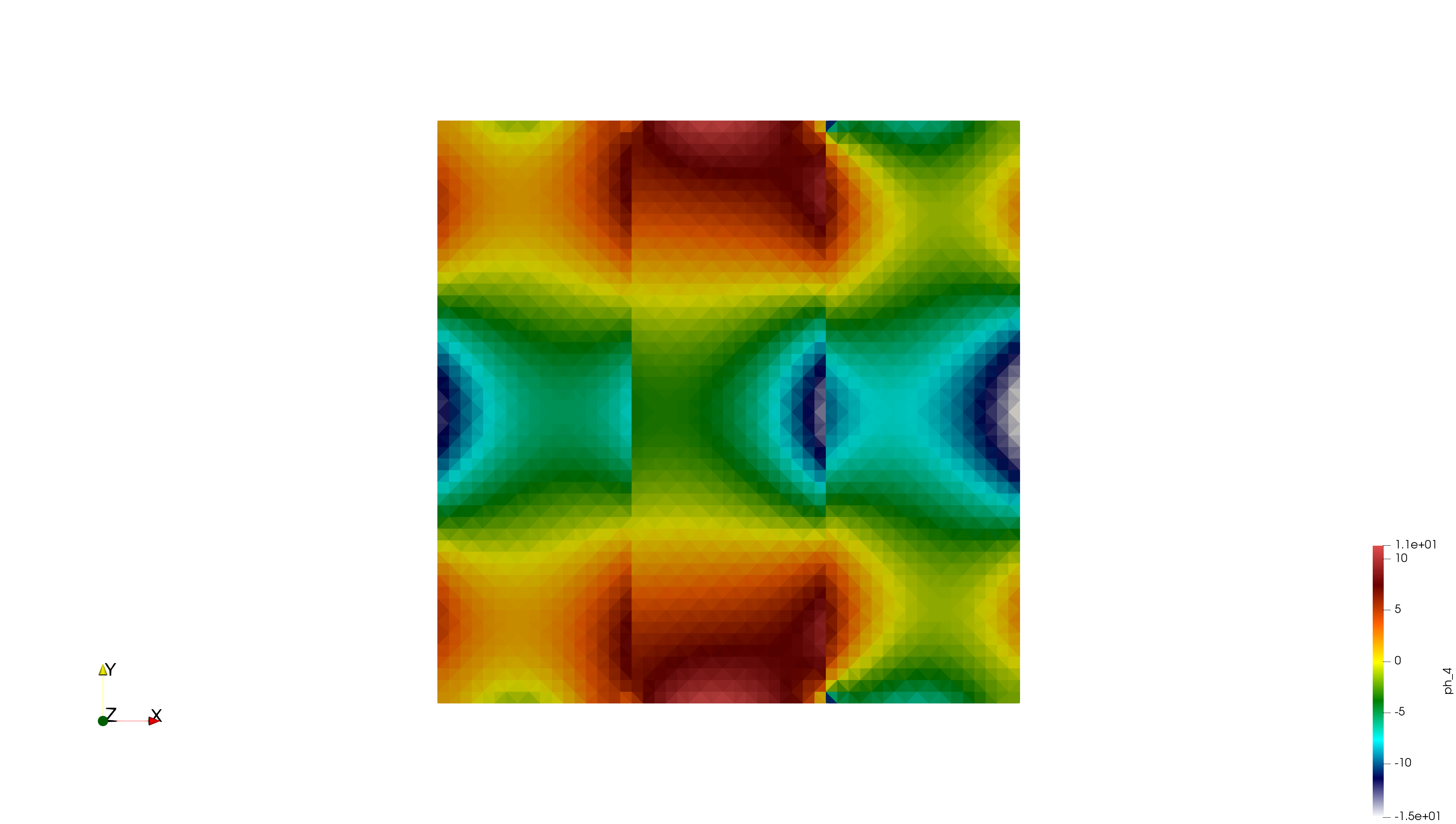}\\
		{\footnotesize $p_{4,h}$}
	\end{minipage}
	\caption{Test \ref{subsec:numerical-experiment-square2D-variablenu}. Displacement/velocity fields and pressure surface plot for the first and fourth eigenmodes for $\nu=0.49999$ and discontinuous $\mu$ and $\lambda$.}
	\label{fig:velocities-nu05}
\end{figure}

\subsection{2D L-shaped domain}\label{subsec:2d-lshape} In this experiment we put to the test the performance of the estimator when the domain present two types of singularities and also have discontinuous Lamé parameters. The domain in consideration is the L-shape domain $\Omega:=(-1,1)^2\backslash\left((0,1)\times (-1,0)\right)$. More precisely, we consider the domain to be free of stress at $y=-1$ and $x=1$, and clamped in the rest of the boundary. The Lamé parameters for this experiments depend on the Young modulus, which is given by
$$
E(x)=\left\{
\begin{aligned}
	&\sqrt{x^2+y^2 +4}, &\text{ if } x\in\Omega_1,\\
	&\sqrt{x^2+y^2 +2}, &\text{ if } x\in\Omega_2,\\
	&\sqrt{x^2+y^2 +4}, &\text{ if } x\in\Omega_3.\\
\end{aligned}
\right.
$$
A sample of the initial mesh and the subdomains is depicted in Figure \ref{fig:lshape2D-inicial}. Note that the domain contains a singularity at $(x,y)=(0,0)$, followed by the points where boundary conditions change from Neumann to Dirichlet type. Also, the Young modulus present discontinuities along the boundary of each subdomain. Hence, suboptimality with uniform refinements is assured. The exact lowest computed eigenvalues for each value of $\nu$ are given in Table \ref{tabla:lshape2D-references} below.

\begin{table}[hbt!]
	{\footnotesize
		\centering
		\begin{tabular}{|c|c|}
			\hline
			$\nu$             & $\sqrt{\widehat{\kappa}_1}$ \\
			\hline
			0.35  & $2.66229608762752$  \\
			0.49 & $2.56684460283184$ \\
			0.5 &   $2.56098604836068$  \\
			\hline
			\hline
		\end{tabular}
		\caption{Test \ref{subsec:2d-lshape}. Reference lowest computed eigenvalues for different values of $\nu$.}
		\label{tabla:lshape2D-references}}
\end{table}

\begin{figure}[hbt!]
	\centering
	\includegraphics[scale=0.0645,trim=30cm 2cm 38cm 2cm,clip]{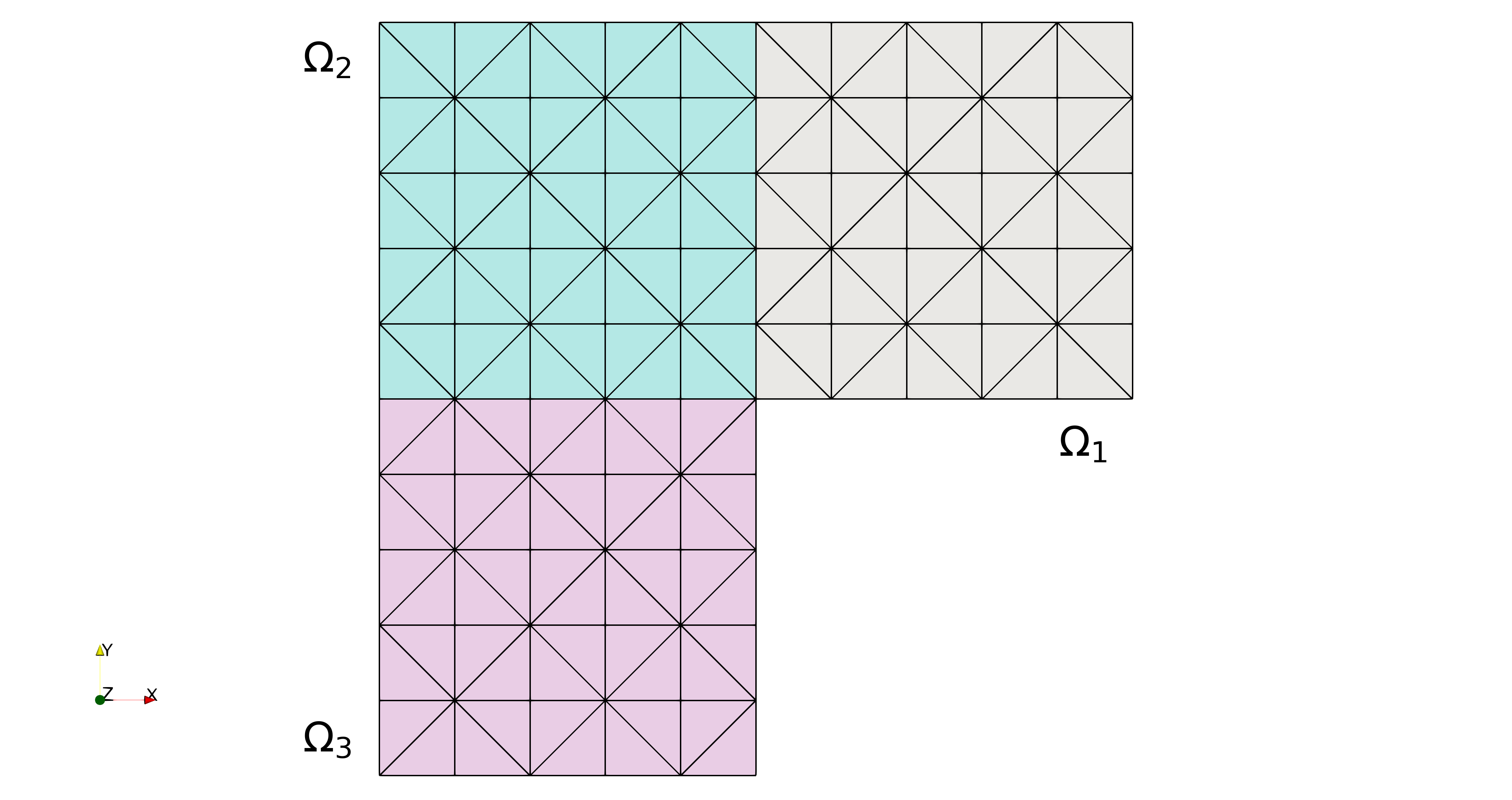}
	\caption{Test \ref{subsec:2d-lshape}. Initial mesh of the domain $\Omega$ subdivided in $\Omega_i$, $i=1,2,3$ subdomains.}
	\label{fig:lshape2D-inicial}
\end{figure}

Evidence of uniform and adaptive refinements are observed in Figure \ref{fig:lshape2D-error}, where suboptimal rate of convergence is observed if uniform refinements are used. Indeed, the convergence rate for $\nu=0.35$ is about $\mathcal{O}(\texttt{dof}^{-0.48})$, while $\mathcal{O}(\texttt{dof}^{-0.44})$ is observed for $\nu=0.49$ and $\nu=0.50$. For the adaptive refinements, an experimental rate $\mathcal{O}(\texttt{dof}^{-1.01})$ was observed for $\nu=0.35$ and $\nu=0.49$, whereas $\mathcal{O}(\texttt{dof}^{-0.99})$ was obtained for $\nu=0.5$. On Figure \ref{fig:lshape2D-effectivity} we observe that the estimator behaves similar to $\mathcal{O}(h^2)$ for all the selected values of $\nu$, obtaining a bounded efficiency between $0.0036$ and $0.0041$.  Figure \ref{fig:lshape-2D-adaptive} portray different meshes obtained in the adaptive refinements. It notes that the refinements are concentrated in the re-entrant corner, the four points where Dirichlet boundary changes to Neumann, and also on the subdomains boundary, where the physical parameters have jumps. Similar to the square domain case, the number of elements marked for $\nu=0.35$ is greater than those marked with $\nu=0.50$. Figure \ref{fig:lshape-2D-uh-ph} present the corresponding lowest computed velocity and pressure  eigenmodes, where we observe very small difference in the field $\bu_{1,h}$, and evident discontinuities for the pressure contour plot in the subdomain boundaries are observed.

\begin{figure}[t!]
	\centering
	\includegraphics[scale=0.45]{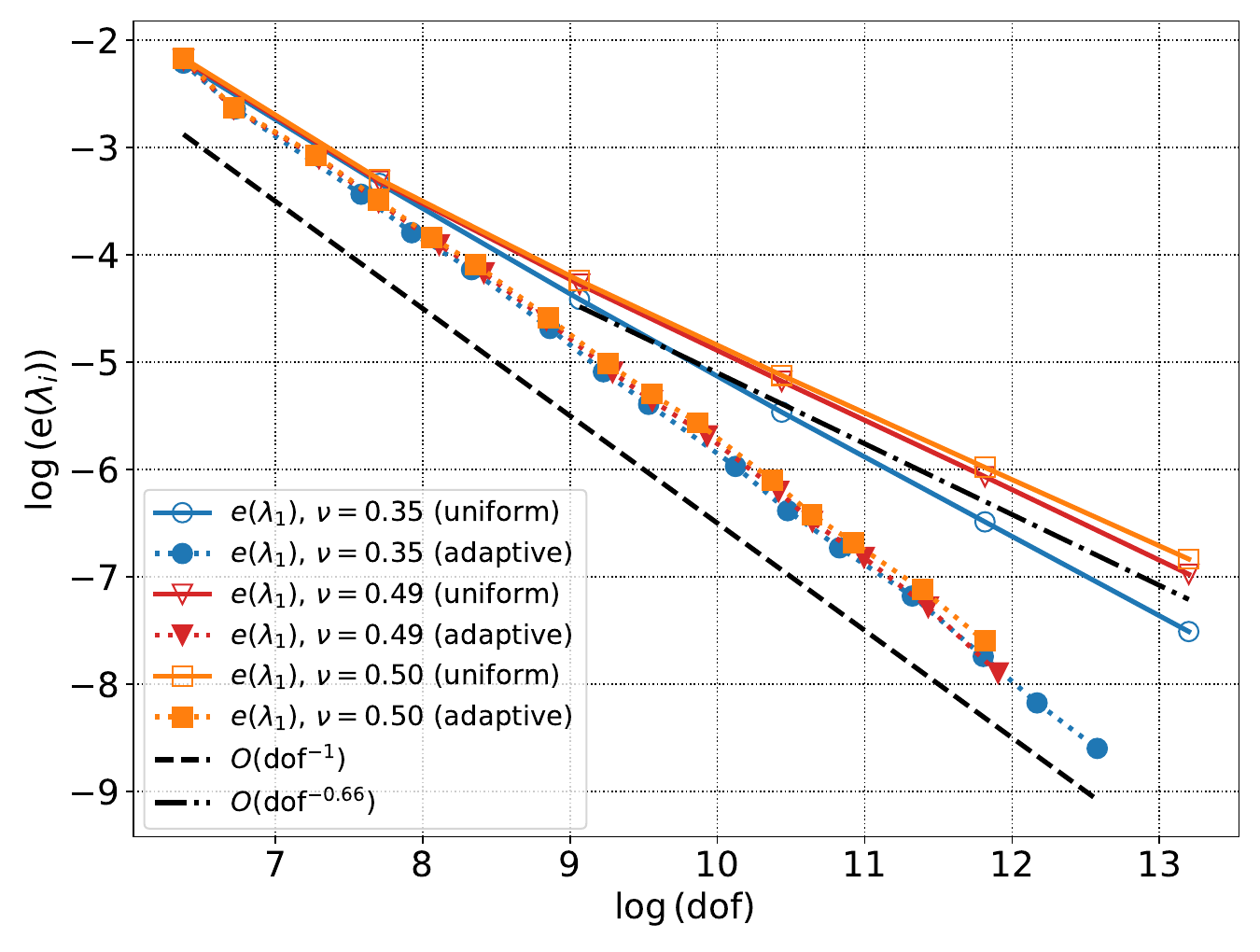}
	\caption{Example \ref{subsec:2d-lshape}.  Error curves obtained from the adaptive algorithm in the Lshaped domain compared with the lines $\mathcal{O}(\texttt{dof}^{-0.66})$ and $\mathcal{O}(\texttt{dof}^{-1})$.}
	\label{fig:lshape2D-error}
\end{figure}

\begin{figure}[t!]
	\centering
	\begin{minipage}{0.49\linewidth}\centering
		\includegraphics[scale=0.45]{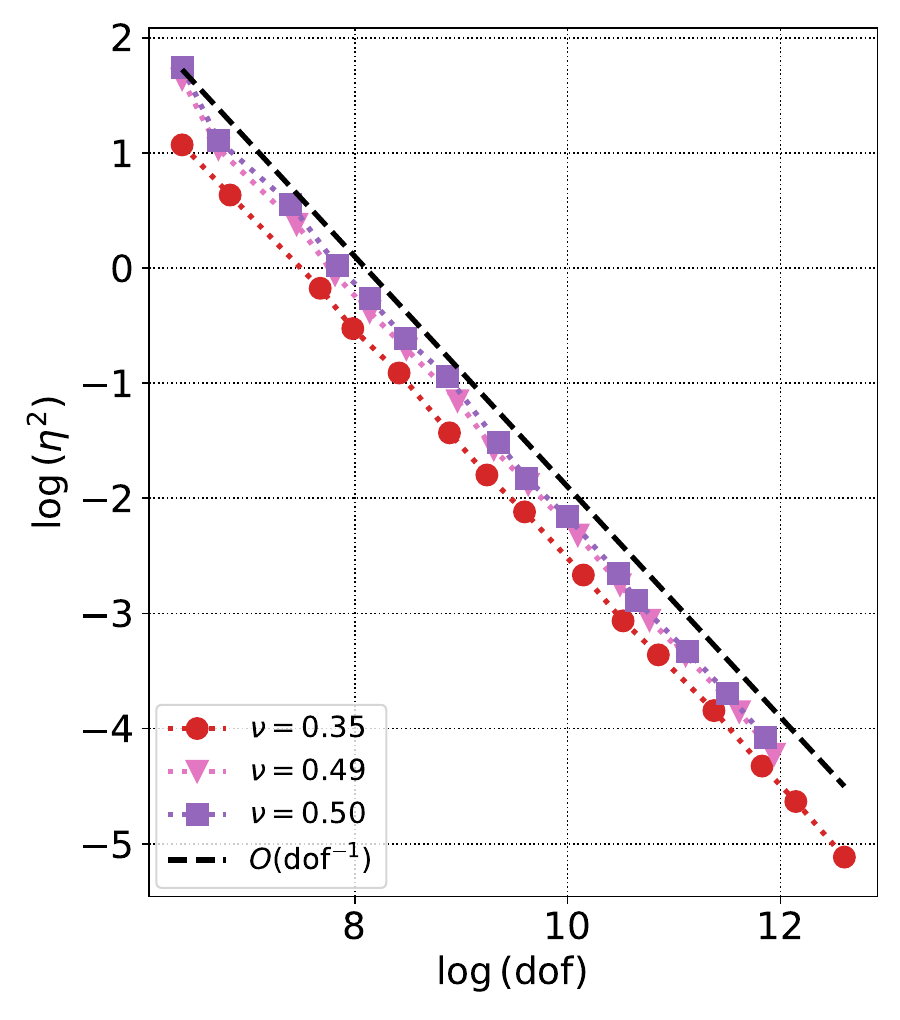}
	\end{minipage}
	\begin{minipage}{0.49\linewidth}\centering
		\includegraphics[scale=0.45]{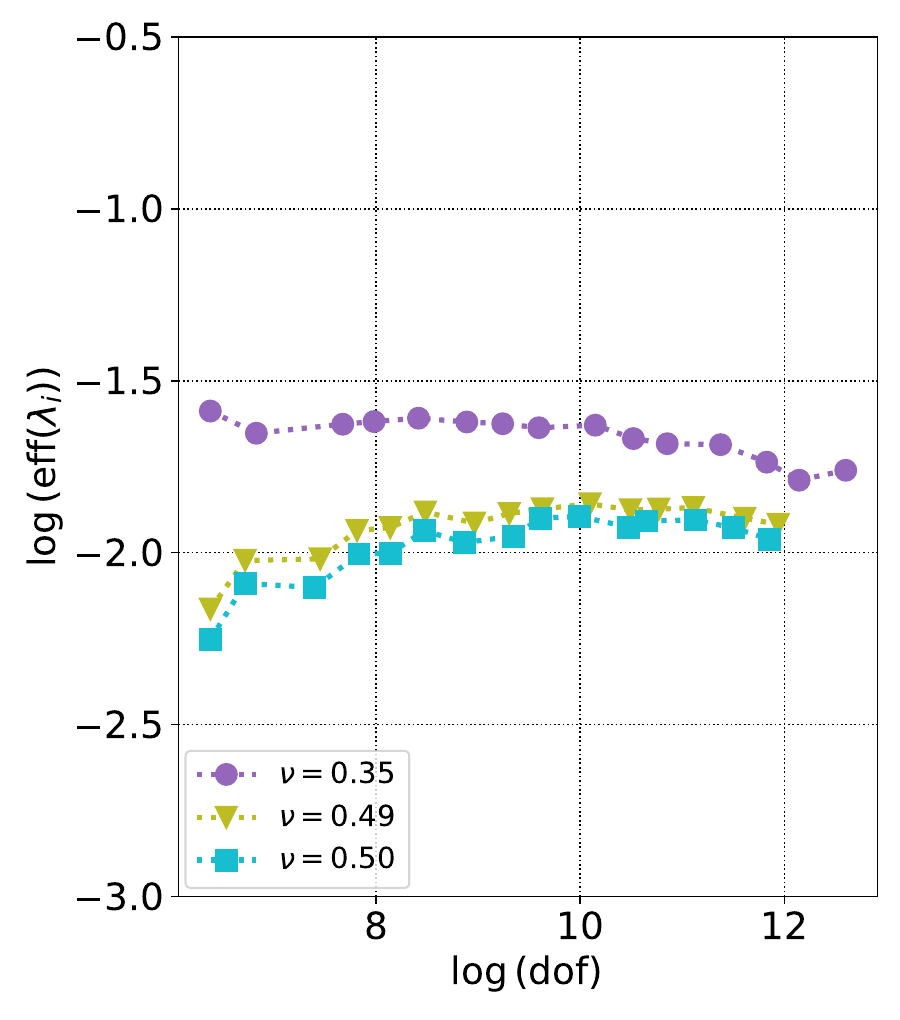}
	\end{minipage}
	\caption{Example \ref{subsec:2d-lshape}.  Estimator and efficiency curves obtained from the adaptive algorithm in the Lshaped domain with different values of $\nu$.}
	\label{fig:lshape2D-effectivity}
\end{figure}

\begin{figure}[hbt!]
	\centering
	\begin{minipage}{0.32\linewidth}
		\centering
		\includegraphics[scale=0.0645,trim=38cm 2cm 38cm 2cm,clip]{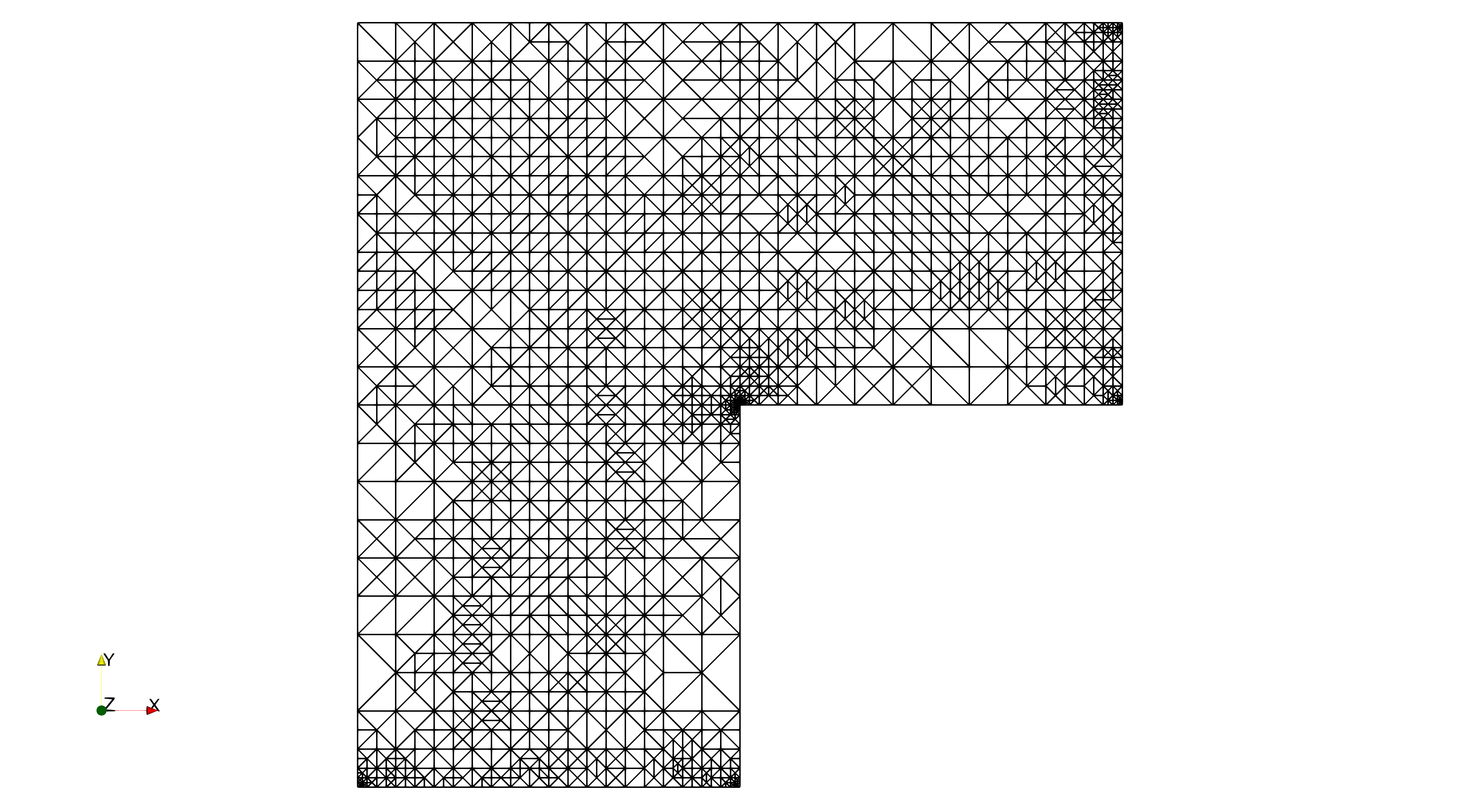}\\
	\end{minipage}
	\begin{minipage}{0.32\linewidth}
		\centering
		\includegraphics[scale=0.0645,trim=38cm 2cm 38cm 2cm,clip]{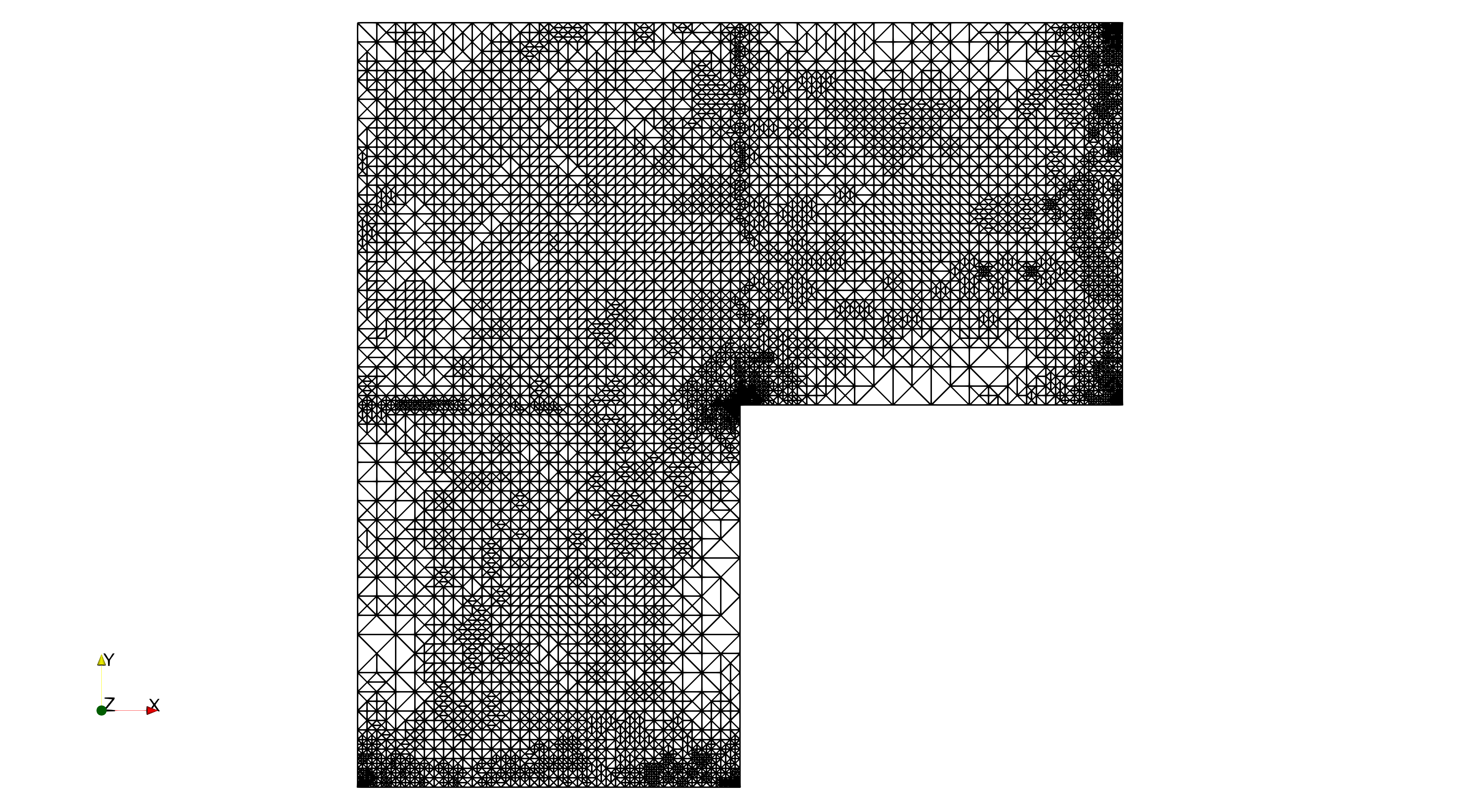}\\
	\end{minipage}
	\begin{minipage}{0.32\linewidth}
		\centering
		\includegraphics[scale=0.0645,trim=38cm 2cm 38cm 2cm,clip]{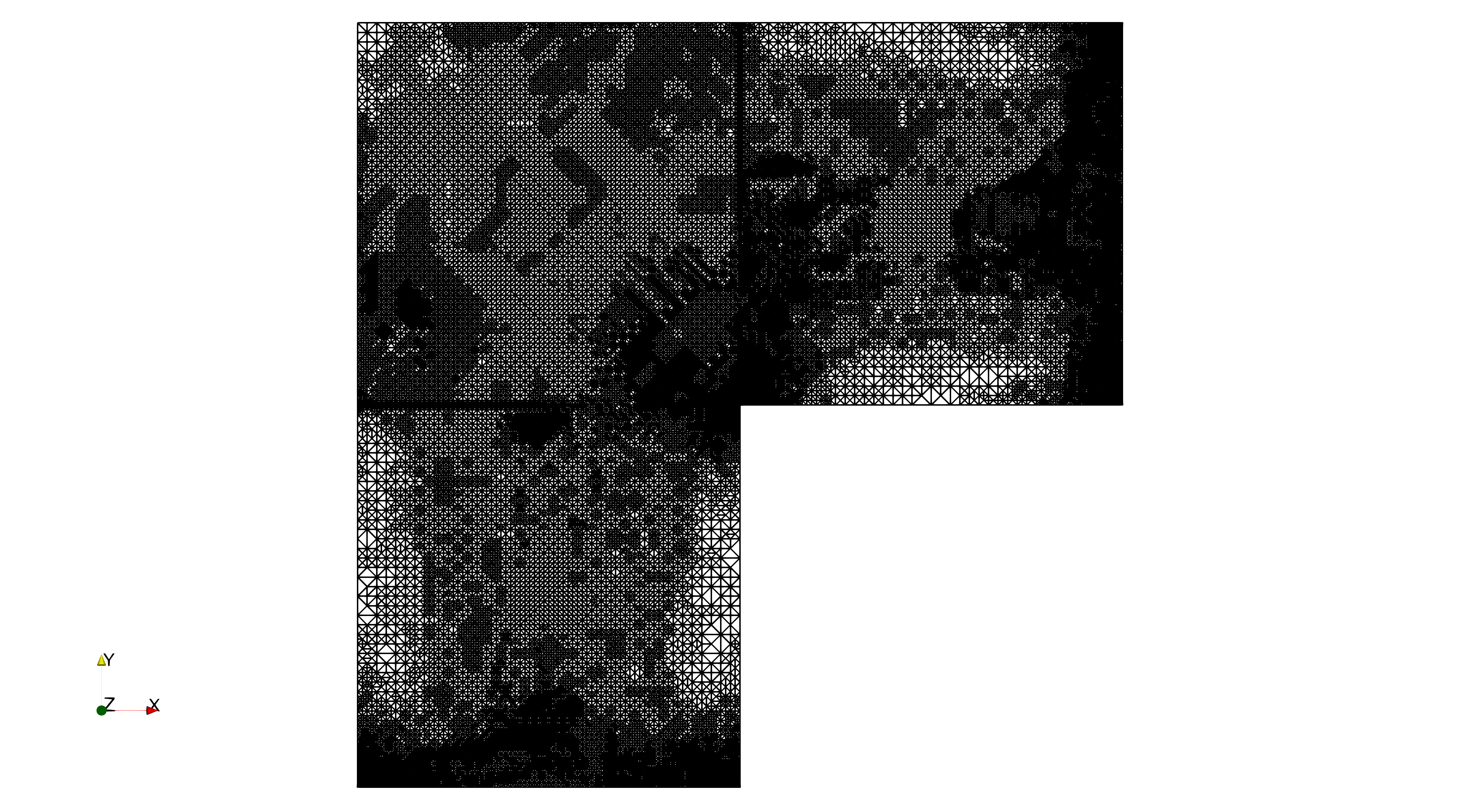}\\
	\end{minipage}\\
	\begin{minipage}{0.32\linewidth}
		\centering
		\includegraphics[scale=0.0645,trim=38cm 2cm 38cm 2cm,clip]{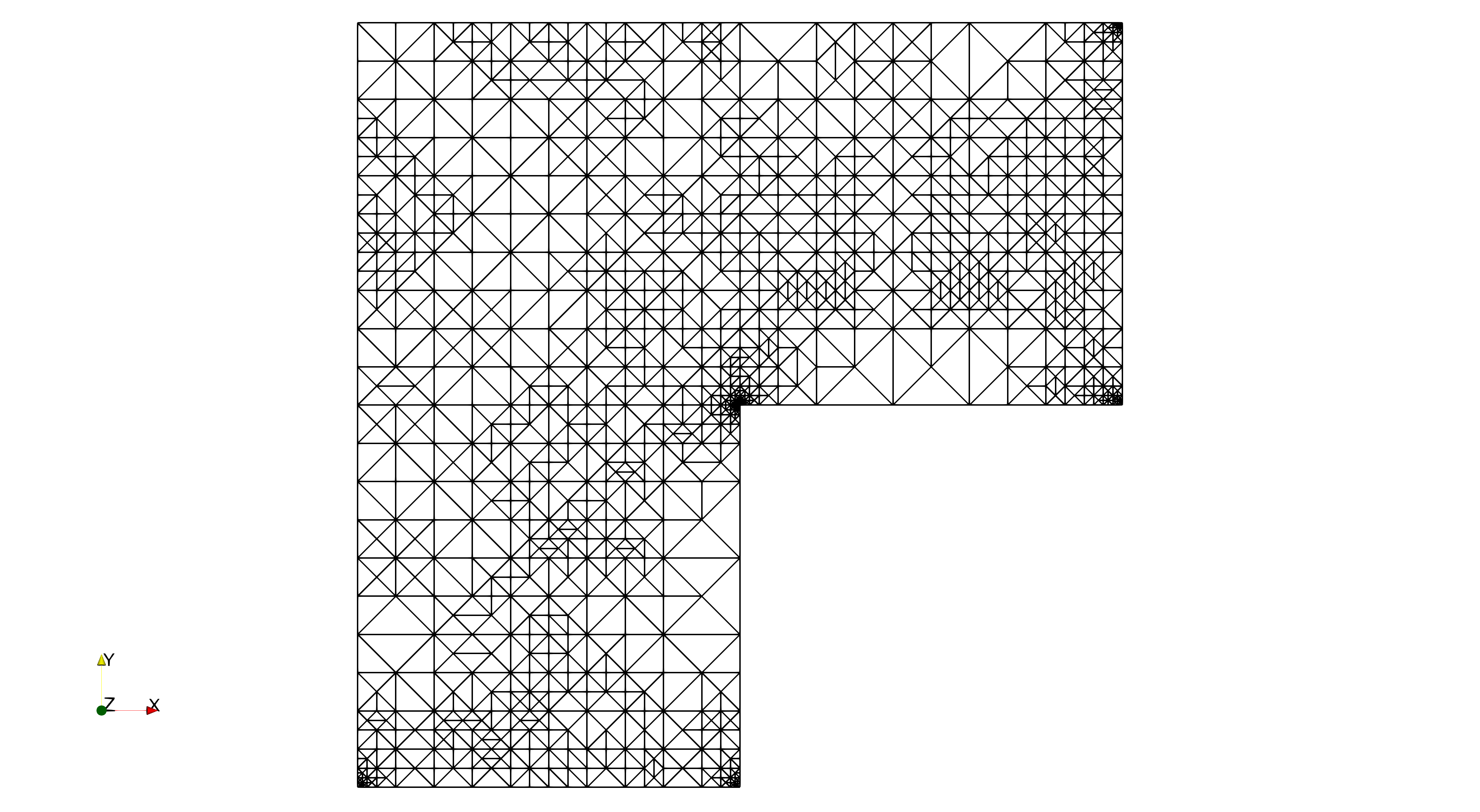}\\
	\end{minipage}
	\begin{minipage}{0.32\linewidth}
		\centering
		\includegraphics[scale=0.0645,trim=38cm 2cm 38cm 2cm,clip]{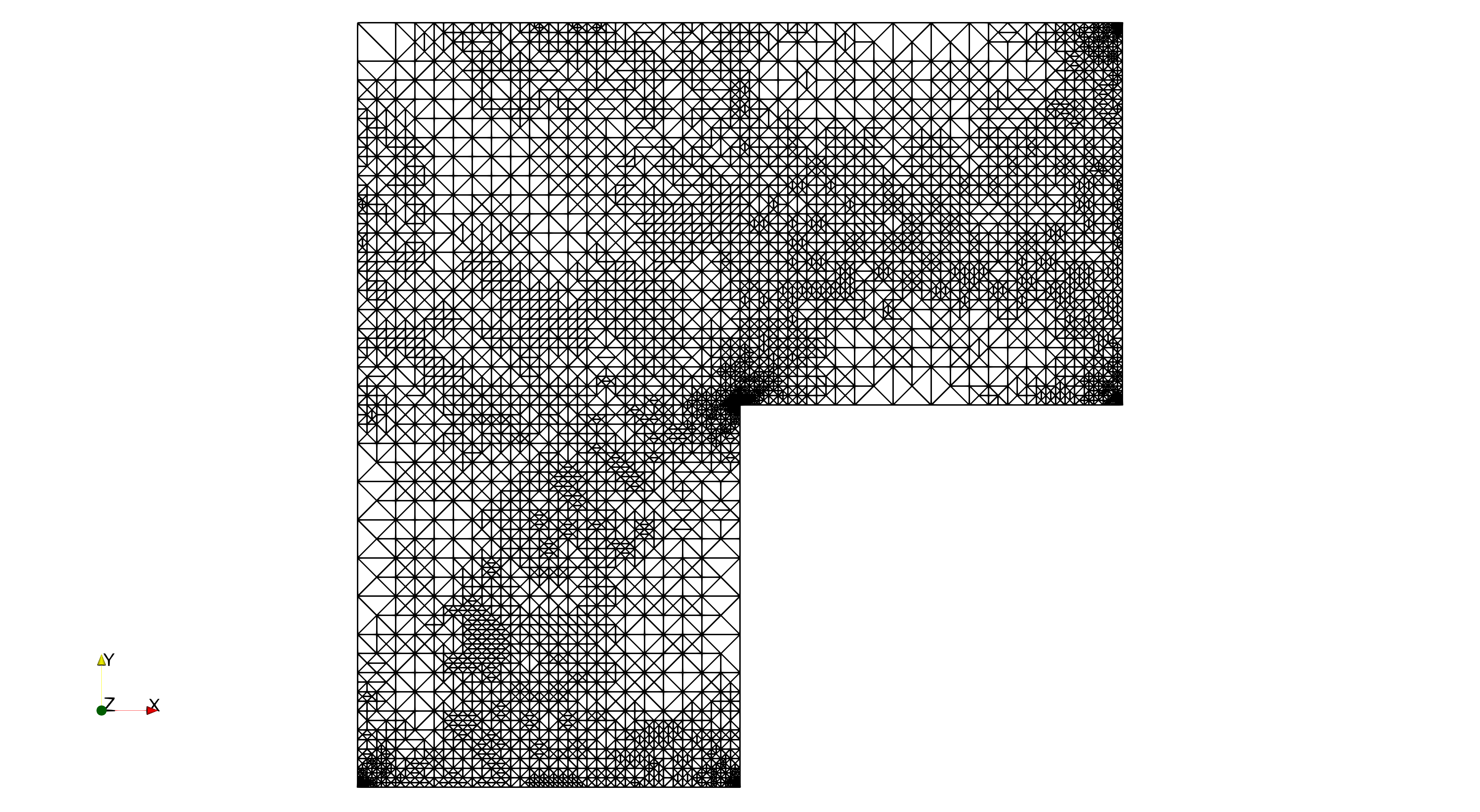}\\
	\end{minipage}
	\begin{minipage}{0.32\linewidth}
		\centering
		\includegraphics[scale=0.0645,trim=38cm 2cm 38cm 2cm,clip]{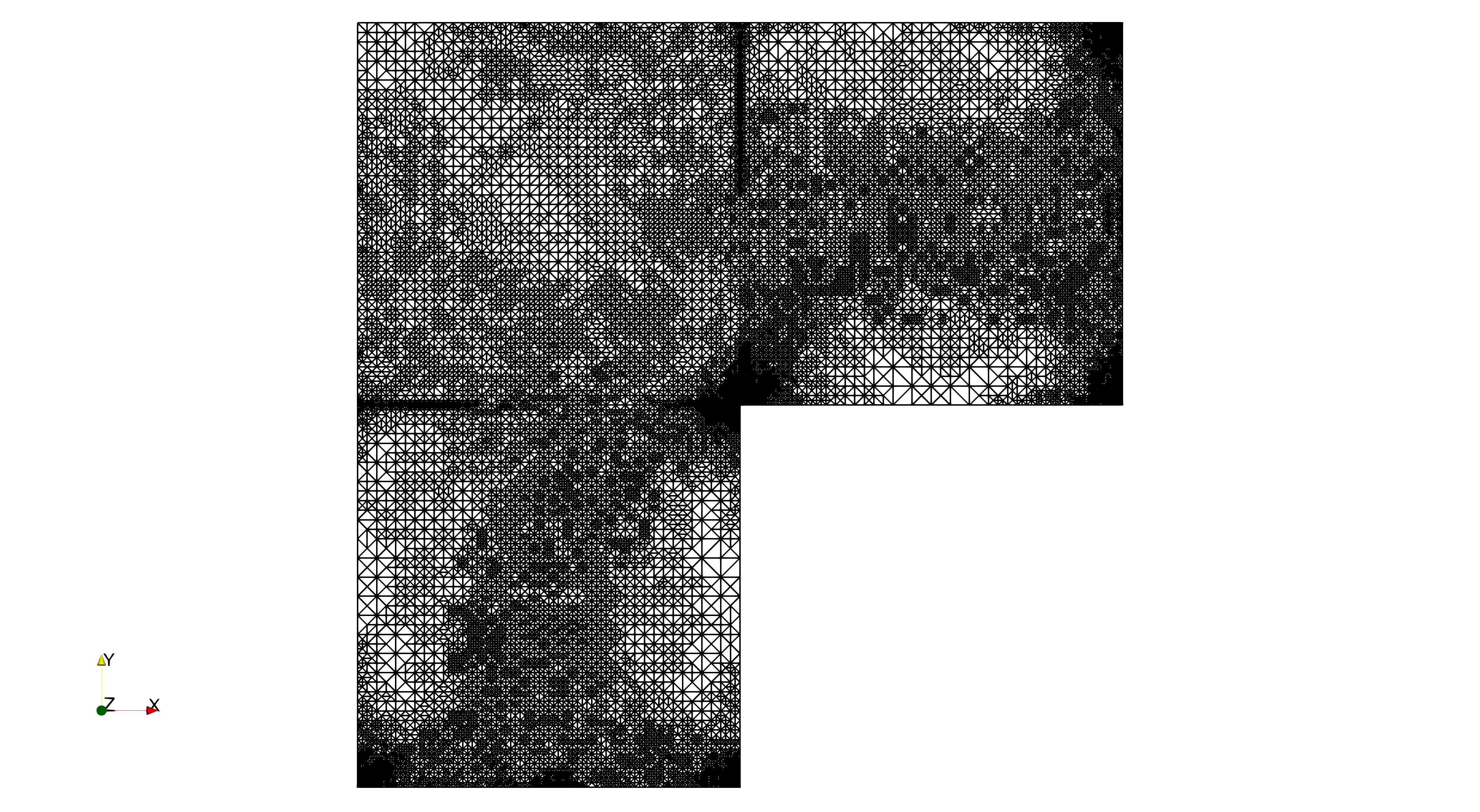}\\
	\end{minipage}
	\caption{Test \ref{subsec:2d-lshape}. Intermediate meshes of the Lshaped domain obtained with the adaptive algorithm and different values of $\nu$. Top row: Meshes with 10163, 50455 and  289949 dofs with $\eta=0.35$. Bottom row: Meshes with 7006, 31911  and 135217 dofs with $\eta=0.5$. }
	\label{fig:lshape-2D-adaptive}
\end{figure}

\begin{figure}[hbt!]
	\centering
	\begin{minipage}{0.32\linewidth}
		\centering
		\includegraphics[scale=0.0645,trim=38cm 0cm 38cm 2cm,clip]{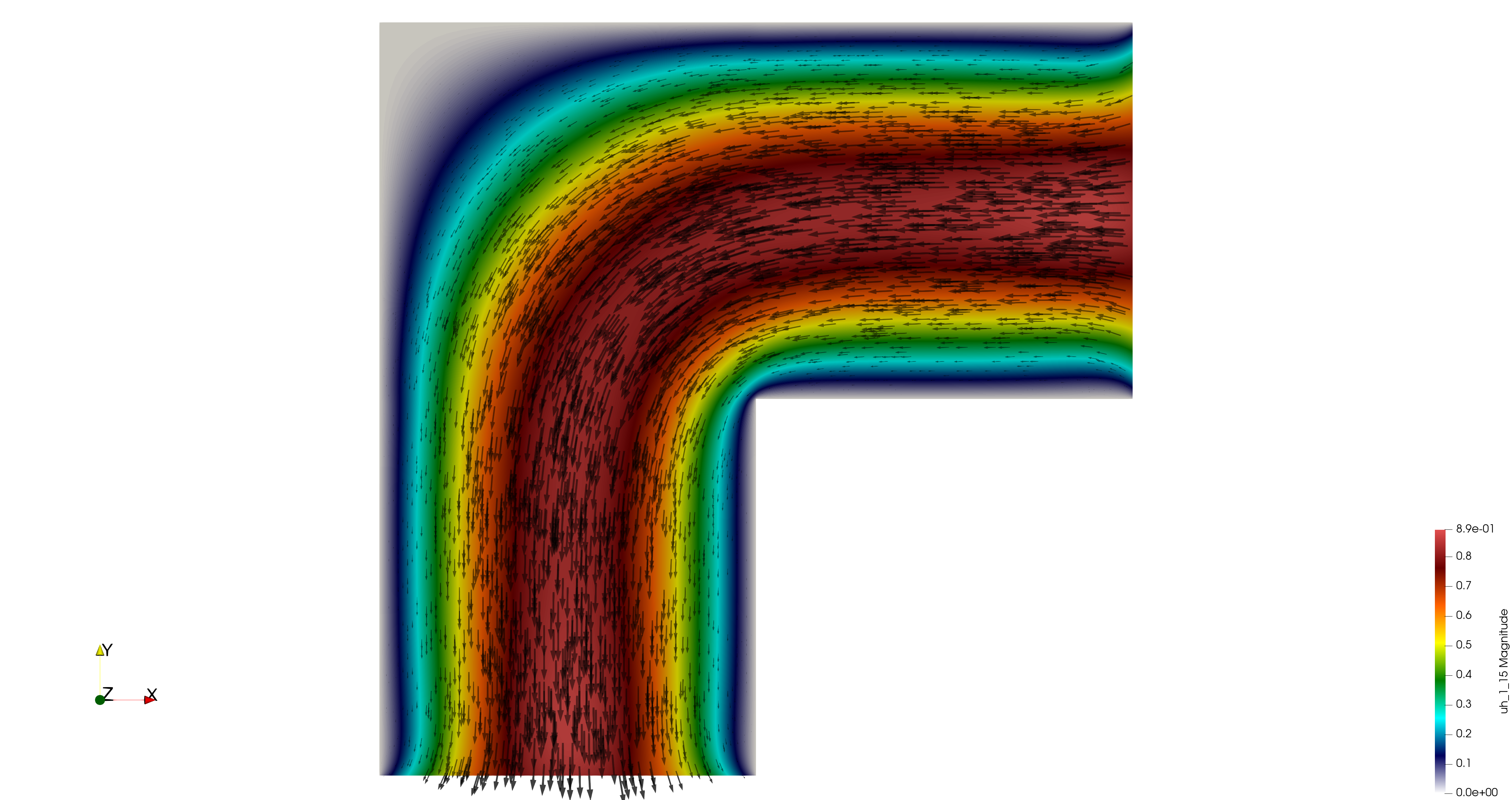}\\
	\end{minipage}
	\begin{minipage}{0.32\linewidth}
		\centering
		\includegraphics[scale=0.0645,trim=38cm 0cm 38cm 2cm,clip]{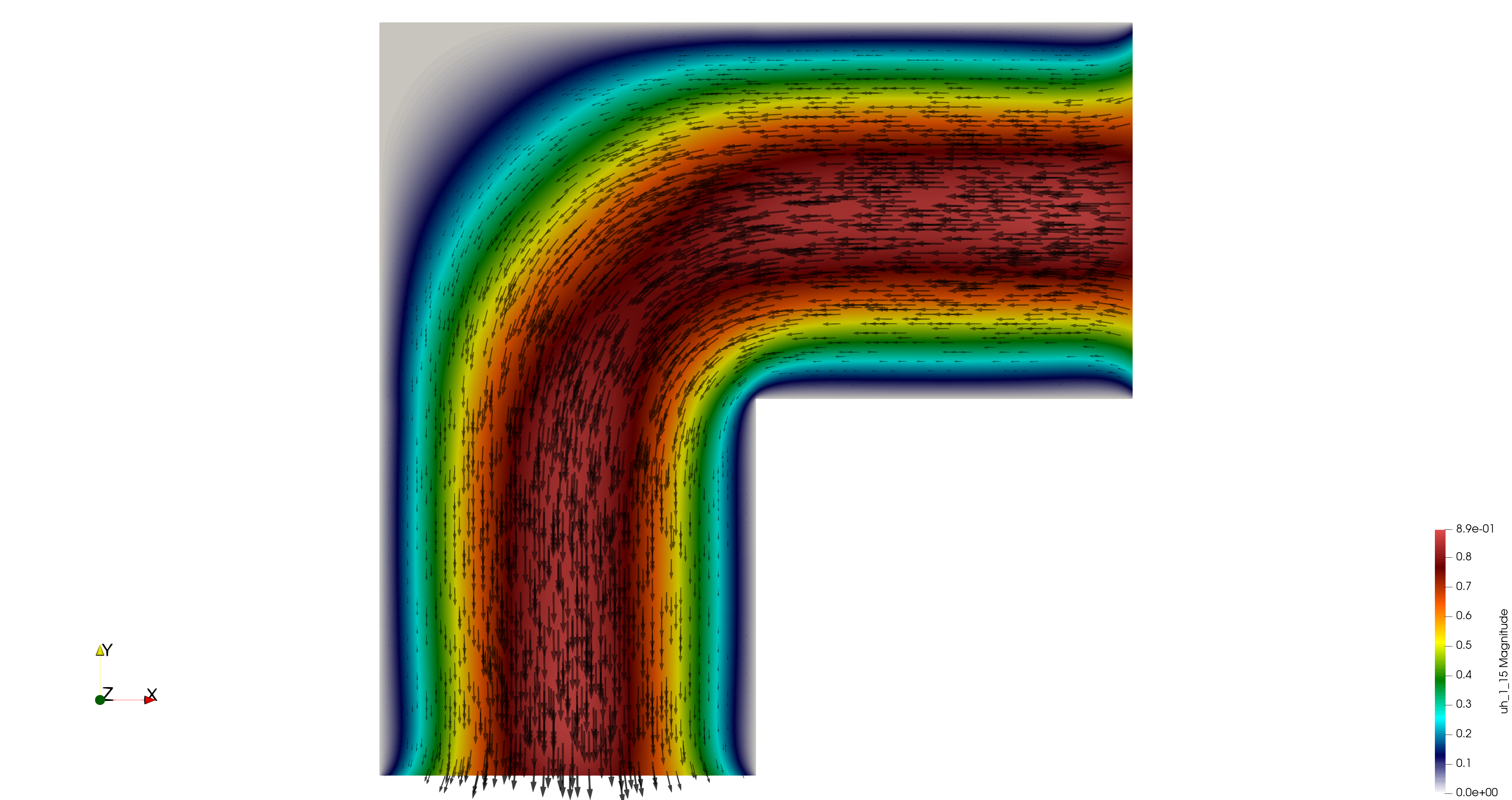}\\
	\end{minipage}
	\begin{minipage}{0.32\linewidth}
		\centering
		\includegraphics[scale=0.0645,trim=38cm 0cm 38cm 2cm,clip]{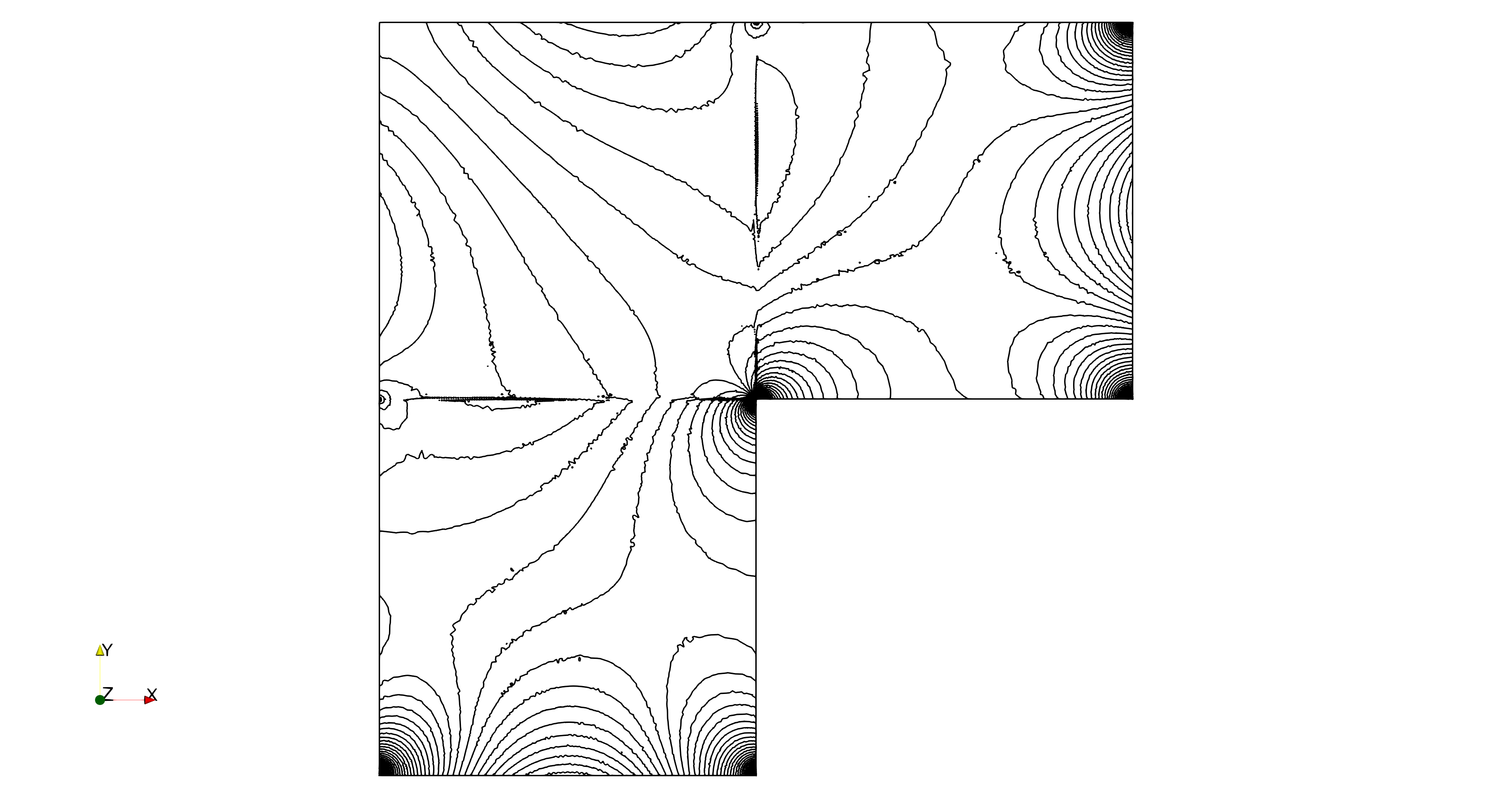}\\
	\end{minipage}
	\caption{Test \ref{subsec:2d-lshape}. Velicity fields for the first computed eigenfrequency for $\nu=0.35$ (left) and $\nu=0.50$ (right) together with the singular pressure contour plot, similar for both cases. }
	\label{fig:lshape-2D-uh-ph}
\end{figure}

\subsection{A three dimensional test}\label{subsec:numerical-experiment-cube3D-bottomfixed}
Now we report some results for a three dimensional test. the domain under consideration is the unit cube $\O:=(0,1)^3$. For simplicity, because of the high computational cost when a 3D test is performed for eigenvalue problems, we focus on the lowest order finite elements. The boundary conditions for this test is the same as Test \ref{subsec:numerical-experiment-square2D}, namely, we assume that the cube is fixed at the bottom and free of stress in the rest of its geometry. For simplicity, we consider $E=\rho=1$.

Tables \ref{tabla:square-MINI-3D-mixed-boundary} and \ref{tabla:square-TH-3D-mixed-boundary} shows the convergence behaviour when computing the first four eigenmodes. It notes that since we have edges singularities along the bottom facet of the cube,  the convergence rate behaves like $\mathcal{O}(h^r)$, with $r\geq 1.36$ for the elastic case, whereas $r\geq 1.2$ for the limit Stokes scenario. We also note that both methods deals with the singularity in a slightly different manner. For instance, a quadratic order is observed in the fourth eigenvalue when Taylor-Hood is used. However, this is still not optimal for this family. On the other hand, suboptimal convergence is also observe for this eigenvalue for the mini-element family, where the rate behaves like $\mathcal{O}(h^{1.7})$  for the three $\nu$ scenarios. Finally, we present in Figure \ref{fig:cubos-3D-bottomfix} the first and fourth lowest computed eigenmodes from Table \ref{tabla:square-TH-3D-mixed-boundary}. We depict the displacement ($\nu=0.35$) or the velocity field ($\nu=0.5$) accompanied with their corresponding pressures surface plot.

For the adaptive refinement, we consider the first eigenvalue. Note that this choice characterized by the fact this eigenvalue has multiplicity $2$, hence it is beyond the theoretical results since we have no control over the distance function between this two values on the a posteriori analysis. However, as we observe in Figure \ref{fig:cube3D-afem-error}, an optimal rate is obtained for 10 iterations of the adaptive algorithm. For instance, a rate of $\mathcal{O}(\texttt{dof}^{-0.66})$ was observed for the last 4 refinements. Evidence of the adaptive meshes are depicted in Figure \ref{fig:cubos-3D-bottomfix_adaptivemeshes}, where we observe that the refinements are performed near the edges where the boundary condition changes. On Figure \ref{fig:cubos-3D-bottomfix} we observe that the first eigenmode for each $\nu$ has pressure singularities precisely on the zones where the adaptive algorithm perform the refinements. We also present the fourth eigenmode, which shows a high pressure gradient near the bottom edges, resulting in the suboptimal behavior observed in Tables \ref{tabla:square-MINI-3D-mixed-boundary} and \ref{tabla:square-TH-3D-mixed-boundary}.

\begin{table}[hbt!]
	{\footnotesize
		\begin{center}
			\caption{Test \ref{subsec:numerical-experiment-cube3D-bottomfixed}. Lowest computed eigenvalues in the unit cube domain with mixed boundary conditions using the mini-element family for different values of $\nu$.}
			\label{tabla:square-MINI-3D-mixed-boundary}
			\begin{tabular}{|c c c c |c| c| }
				\hline
				\hline
				$N=10$             &  $N=15$         &   $N=20$         & $N=25$ & Order & $\sqrt{\widehat{\kappa}_{extr}}$  \\ 
				\hline
				\multicolumn{6}{c}{$\nu=0.35$}  \\
				\hline
				    0.6794  &     0.6736  &     0.6711  &     0.6698  & 1.51 &     0.6666  \\
				0.6830  &     0.6753  &     0.6721  &     0.6705  & 1.59 &     0.6666  \\
				0.9234  &     0.9066  &     0.9002  &     0.8972  & 1.85 &     0.8911  \\
				1.6203  &     1.6138  &     1.6110  &     1.6094  & 1.41 &     1.6052  \\
				1.7831  &     1.7661  &     1.7596  &     1.7564  & 1.77 &     1.7496  \\
				\hline
				\multicolumn{6}{c}{$\nu=0.49$}  \\
				\hline
				    0.6946  &     0.6842  &     0.6796  &     0.6770  & 1.34 &     0.6696  \\
				0.6985  &     0.6861  &     0.6807  &     0.6777  & 1.41 &     0.6697  \\
				0.8833  &     0.8665  &     0.8600  &     0.8567  & 1.77 &     0.8500  \\
				1.6838  &     1.6670  &     1.6597  &     1.6557  & 1.42 &     1.6450  \\
				1.7479  &     1.7282  &     1.7204  &     1.7166  & 1.72 &     1.7082  \\
				\hline
				\multicolumn{6}{c}{$\nu=0.5$}  \\
				\hline    
				0.6974  &     0.6862  &     0.6812  &     0.6784  & 1.33 &     0.6702  \\
				0.7015  &     0.6882  &     0.6823  &     0.6791  & 1.40 &     0.6703  \\
				0.8811  &     0.8641  &     0.8575  &     0.8543  & 1.75 &     0.8473  \\
				1.6912  &     1.6728  &     1.6649  &     1.6605  & 1.42 &     1.6488  \\
				1.7464  &     1.7262  &     1.7182  &     1.7143  & 1.71 &     1.7055  \\
				\hline
				\hline
			\end{tabular}
	\end{center}
	}
\end{table}
\begin{table}[hbt!]
	{\footnotesize
		\begin{center}
			\caption{Test \ref{subsec:numerical-experiment-cube3D-bottomfixed}. Lowest computed eigenvalues in the unit cube domain with mixed boundary conditions using the Taylor-Hood family for different values of $\nu$.}
			\label{tabla:square-TH-3D-mixed-boundary}
			\begin{tabular}{|c c c c |c| c| }
				\hline
				\hline
				$N=10$             &  $N=15$         &   $N=20$         & $N=25$ & Order & $\sqrt{\widehat{\kappa}_{extr}}$  \\ 
				\hline
				\multicolumn{6}{c}{$\nu=0.35$}  \\
				\hline
				 0.6680  &     0.6673  &     0.6670  &     0.6669  & 1.42 &     0.6664  \\
				0.6681  &     0.6674  &     0.6671  &     0.6669  & 1.47 &     0.6664  \\
				0.8920  &     0.8916  &     0.8915  &     0.8914  & 2.28 &     0.8914  \\
				1.6073  &     1.6064  &     1.6060  &     1.6057  & 1.39 &     1.6051  \\
				1.7509  &     1.7505  &     1.7503  &     1.7502  & 2.44 &     1.7502  \\
				\hline
				\multicolumn{6}{c}{$\nu=0.49$}  \\
				\hline
				    0.6730  &     0.6715  &     0.6708  &     0.6704  & 1.27 &     0.6692  \\
				0.6731  &     0.6716  &     0.6709  &     0.6705  & 1.32 &     0.6693  \\
				0.8513  &     0.8508  &     0.8506  &     0.8505  & 1.85 &     0.8502  \\
				1.6493  &     1.6471  &     1.6461  &     1.6454  & 1.20 &     1.6434  \\
				1.7100  &     1.7094  &     1.7091  &     1.7090  & 1.99 &     1.7088  \\
				\hline
				\multicolumn{6}{c}{$\nu=0.5$}  \\
				\hline
				    0.6738  &     0.6723  &     0.6715  &     0.6711  & 1.26 &     0.6698  \\
				0.6740  &     0.6723  &     0.6716  &     0.6711  & 1.31 &     0.6699  \\
				0.8487  &     0.8481  &     0.8479  &     0.8478  & 1.83 &     0.8476  \\
				1.6532  &     1.6509  &     1.6498  &     1.6491  & 1.18 &     1.6469  \\
				1.7074  &     1.7067  &     1.7065  &     1.7064  & 1.96 &     1.7061  \\
				\hline
				\hline
			\end{tabular}
	\end{center}
	}
\end{table}

\begin{figure}[hbt!]
	\centering
	\begin{minipage}{0.49\linewidth}\centering
		\includegraphics[scale=0.07,trim=30cm 2cm 30cm 0cm,clip]{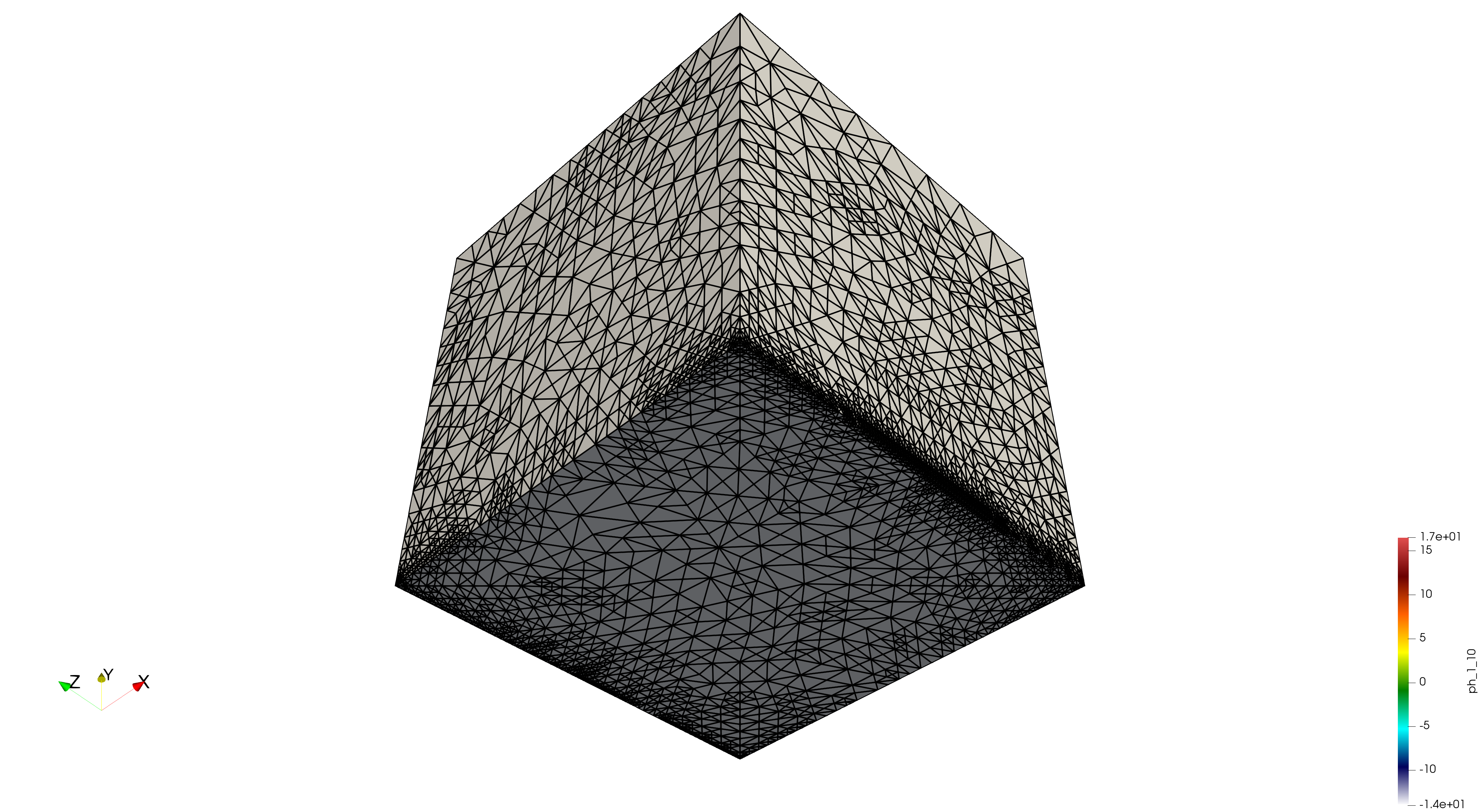}\\
	\end{minipage}
	\begin{minipage}{0.49\linewidth}\centering
		\includegraphics[scale=0.07,trim=30cm 2cm 30cm 0cm,clip]{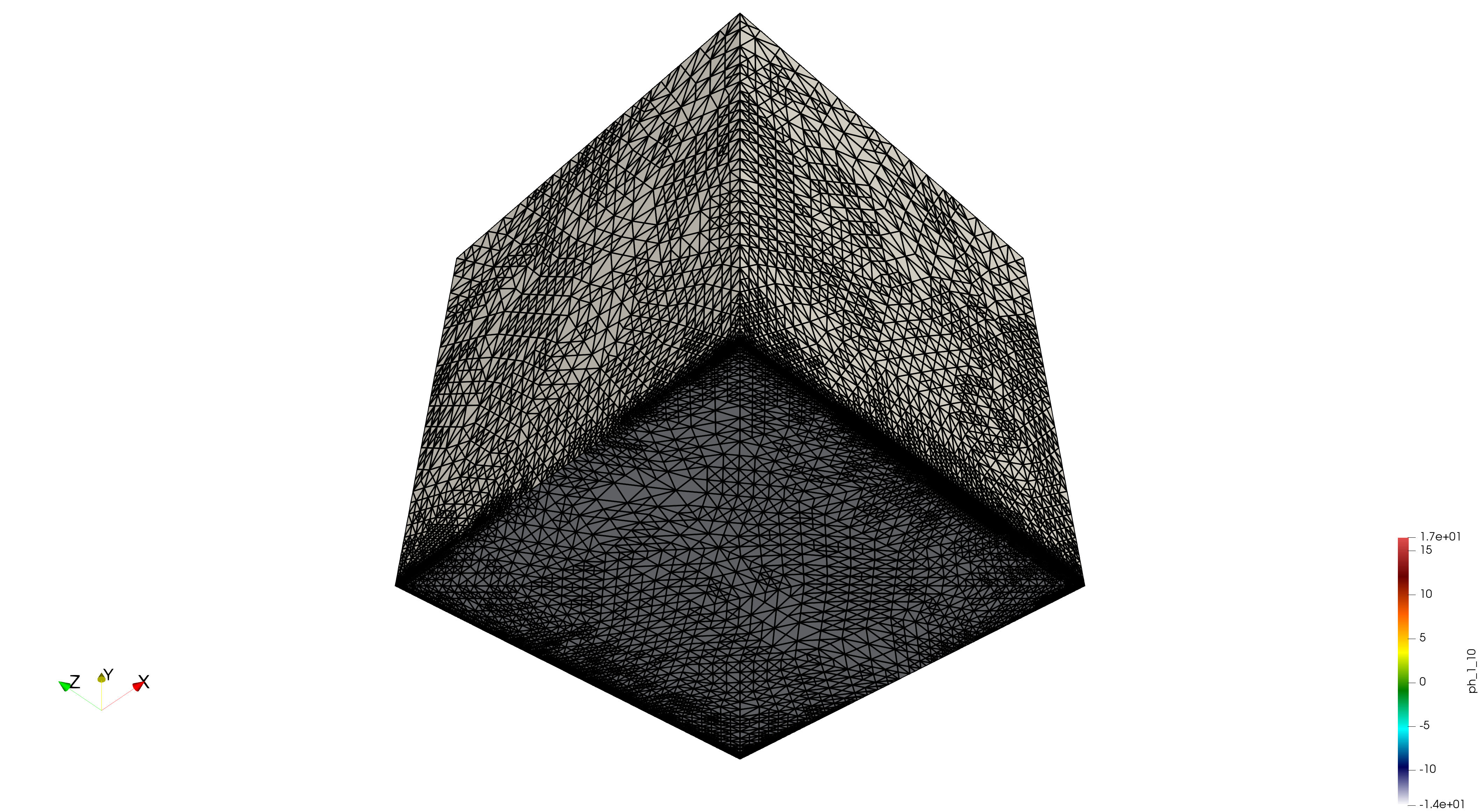}\\
	\end{minipage}\\
	\begin{minipage}{0.49\linewidth}\centering
		\includegraphics[scale=0.07,trim=30cm 2cm 30cm 0cm,clip]{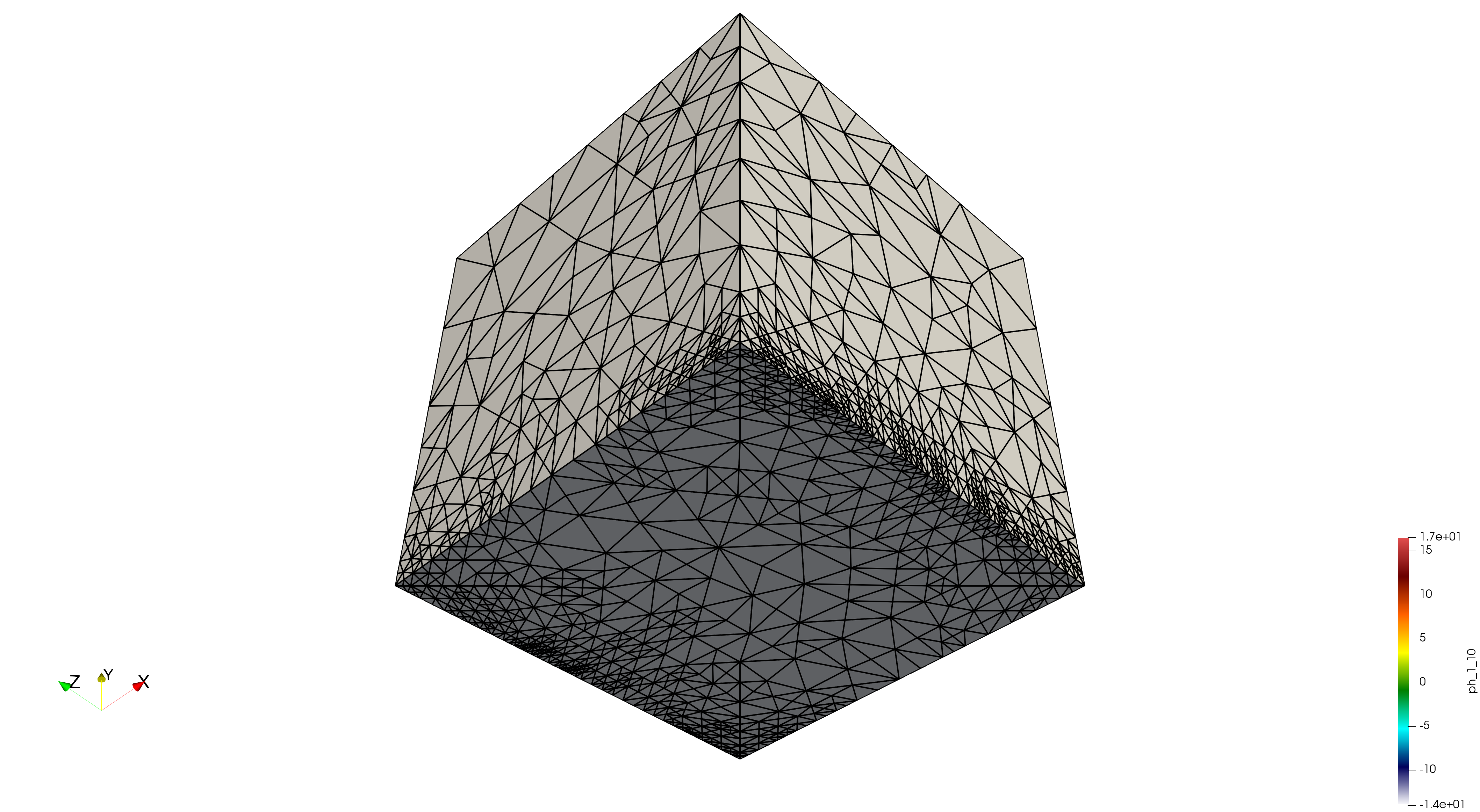}\\
	\end{minipage}
	\begin{minipage}{0.49\linewidth}\centering
		\includegraphics[scale=0.07,trim=30cm 2cm 30cm 0cm,clip]{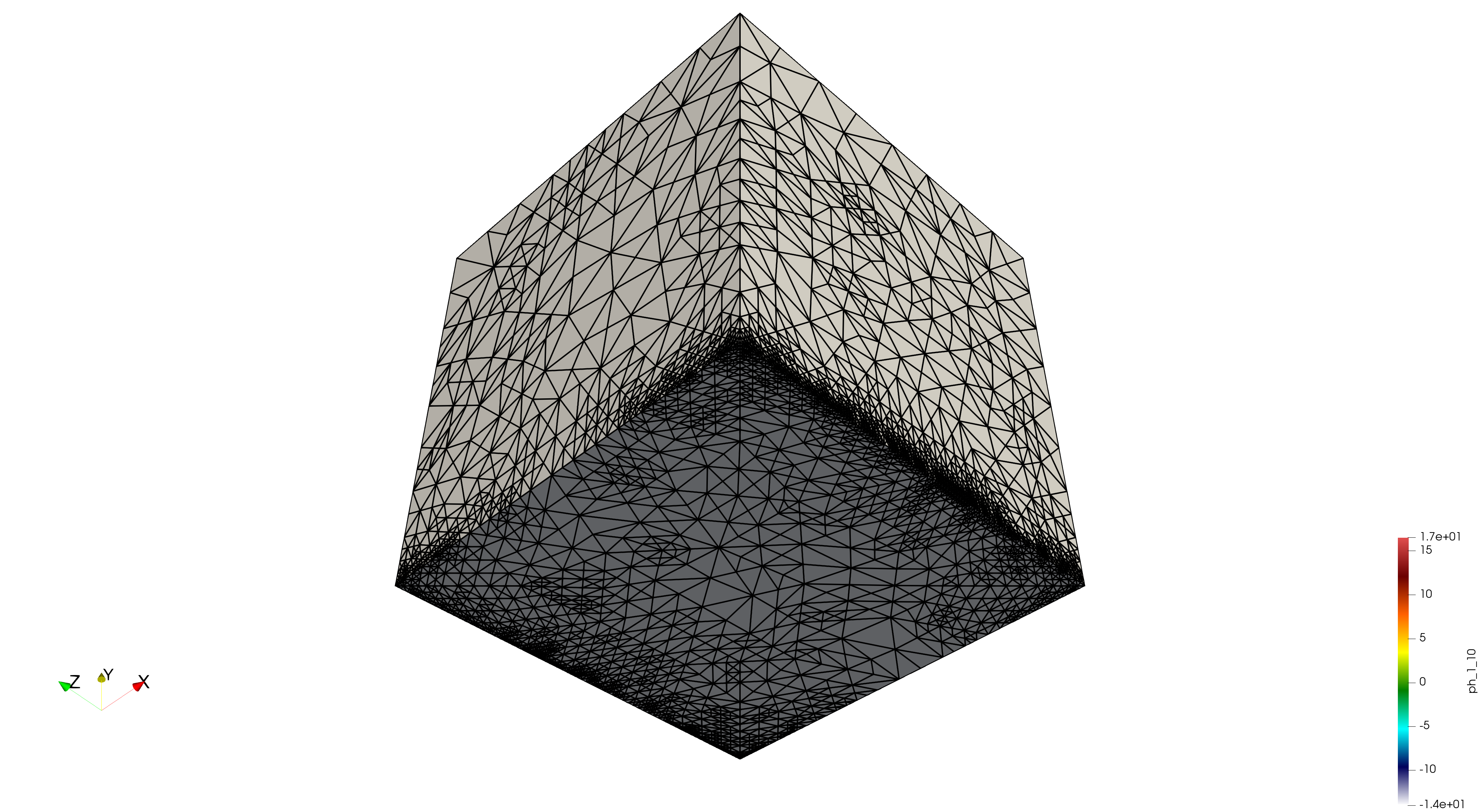}
	\end{minipage}
	\caption{Test \ref{subsec:numerical-experiment-cube3D-bottomfixed}. Bottom view of intermediate meshes of the bottom-clamped unit cube domain obtained with the adaptive algorithm and different values of $\nu$. Top row: Meshes with 473830 and 2146942 dofs with $\eta=0.35$. Bottom row: Meshes with 97753 and 314288 dofs with $\eta=0.5$}
	\label{fig:cubos-3D-bottomfix_adaptivemeshes}
\end{figure}

\begin{figure}[hbt!]
	\centering
	\begin{minipage}{0.49\linewidth}\centering
	\includegraphics[scale=0.08,trim=30cm 5cm 30cm 2cm,clip]{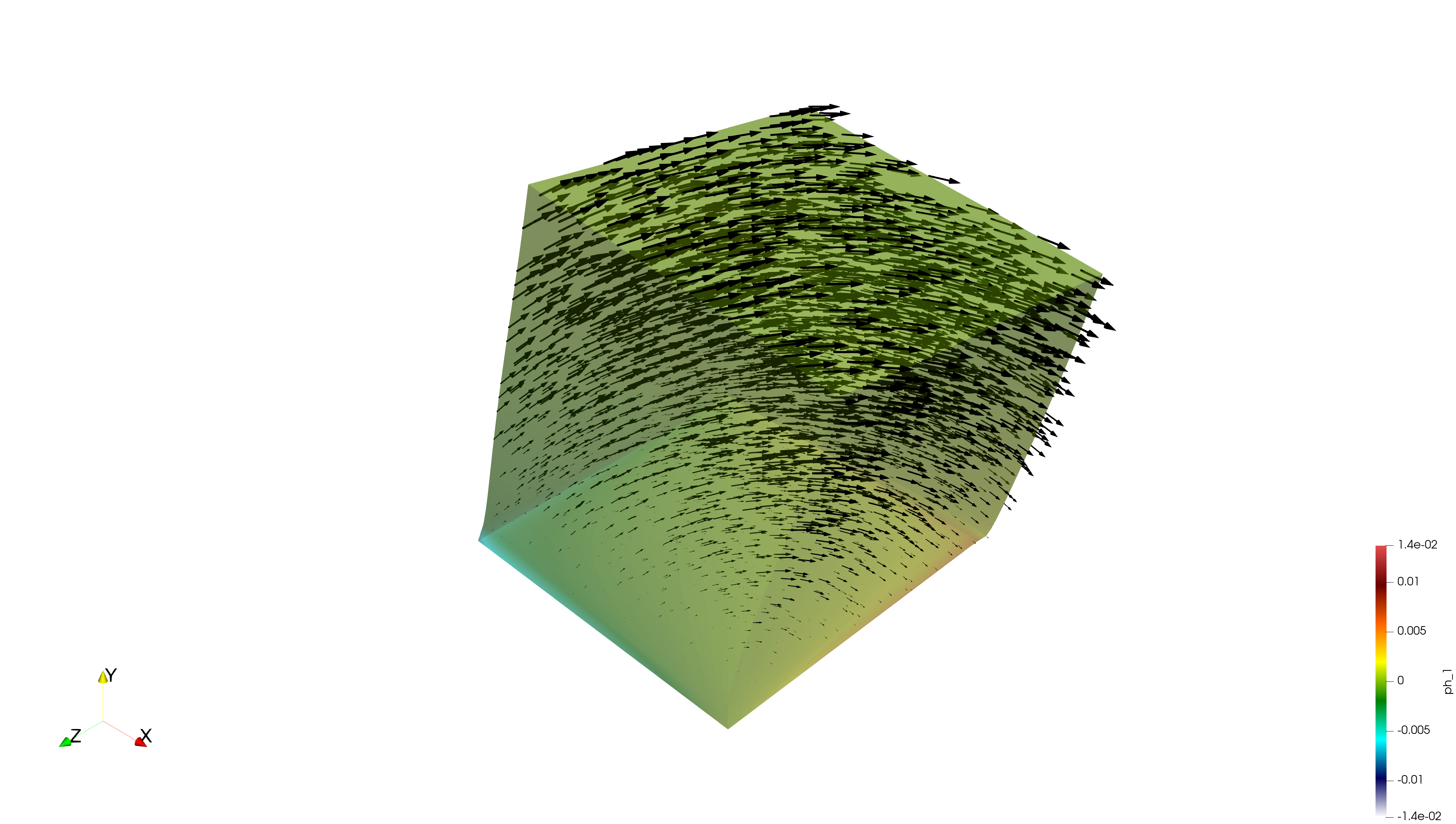}\\
	{\footnotesize $p_{1,h}, \bu_{1,h}, \nu=0.35$}
	\end{minipage}
	\begin{minipage}{0.49\linewidth}\centering
		\includegraphics[scale=0.08,trim=30cm 5cm 30cm 2cm,clip]{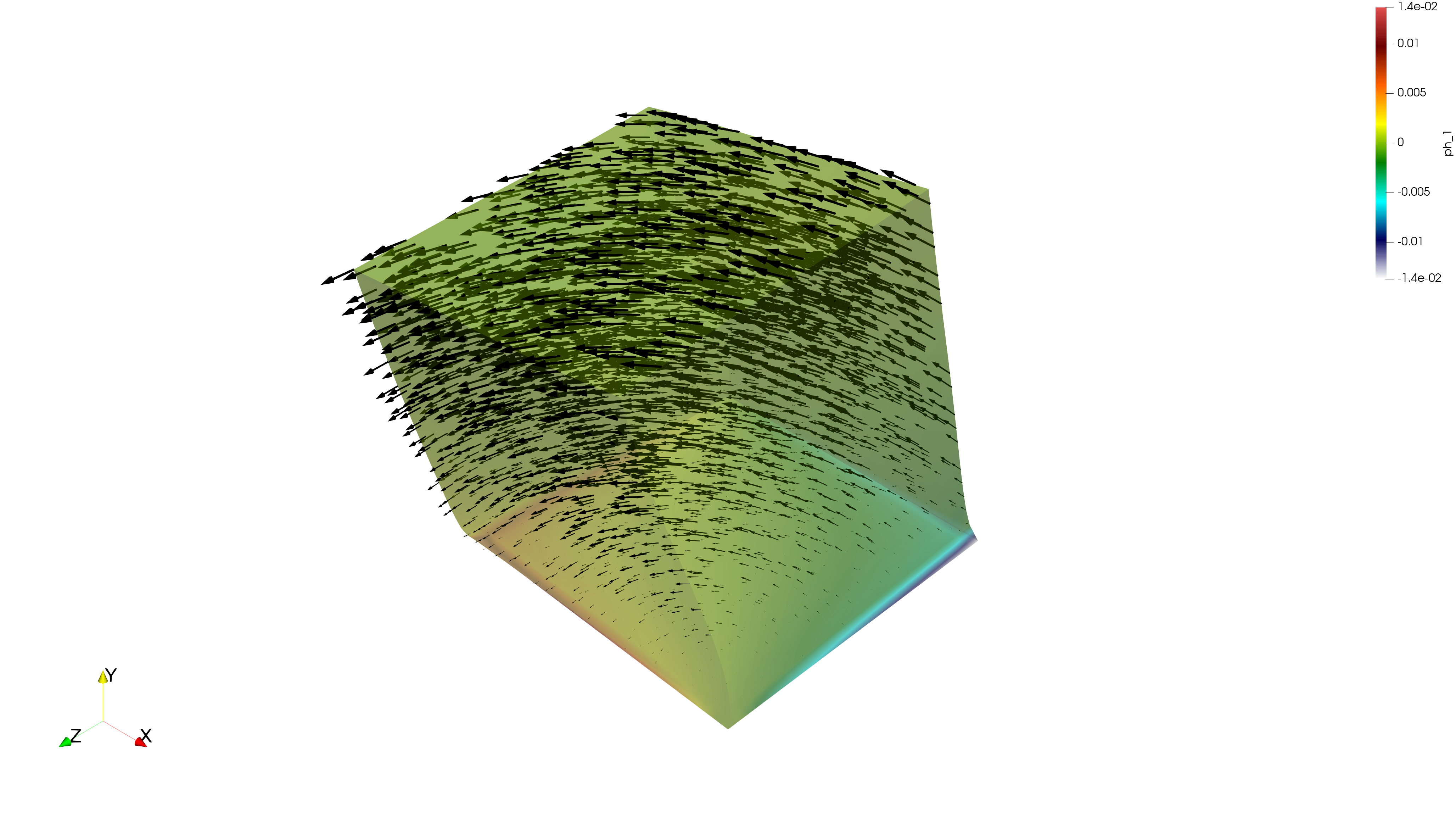}\\
		{\footnotesize $p_{1,h}, \bu_{1,h}, \nu=0.50$}
	\end{minipage}\\
	\begin{minipage}{0.49\linewidth}\centering
		\includegraphics[scale=0.08,trim=30cm 5cm 30cm 2cm,clip]{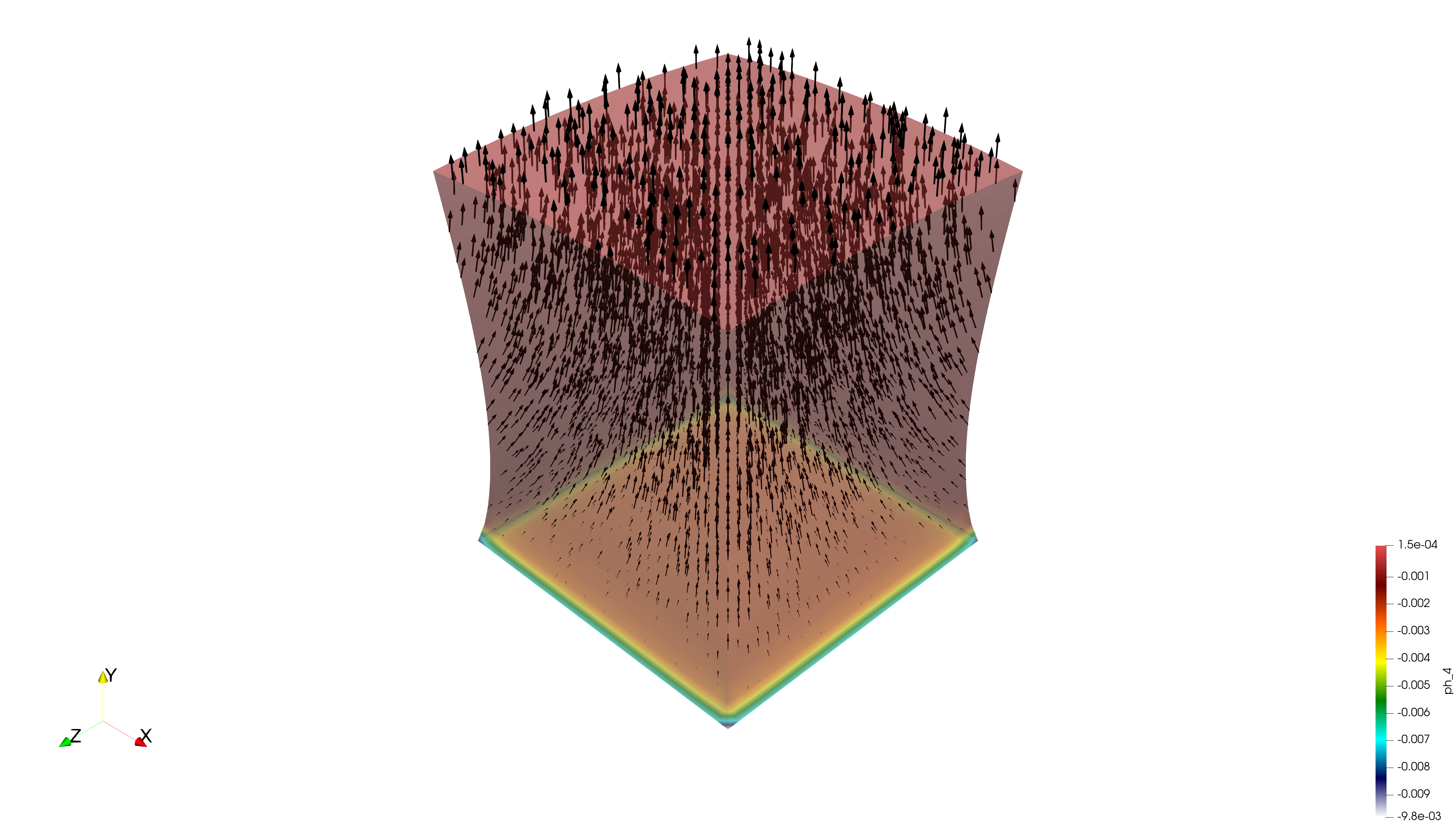}
		{\footnotesize $p_{4,h}, \bu_{4,h}, \nu=0.35$}
	\end{minipage}
	\begin{minipage}{0.49\linewidth}\centering
		\includegraphics[scale=0.08,trim=30cm 5cm 30cm 2cm,clip]{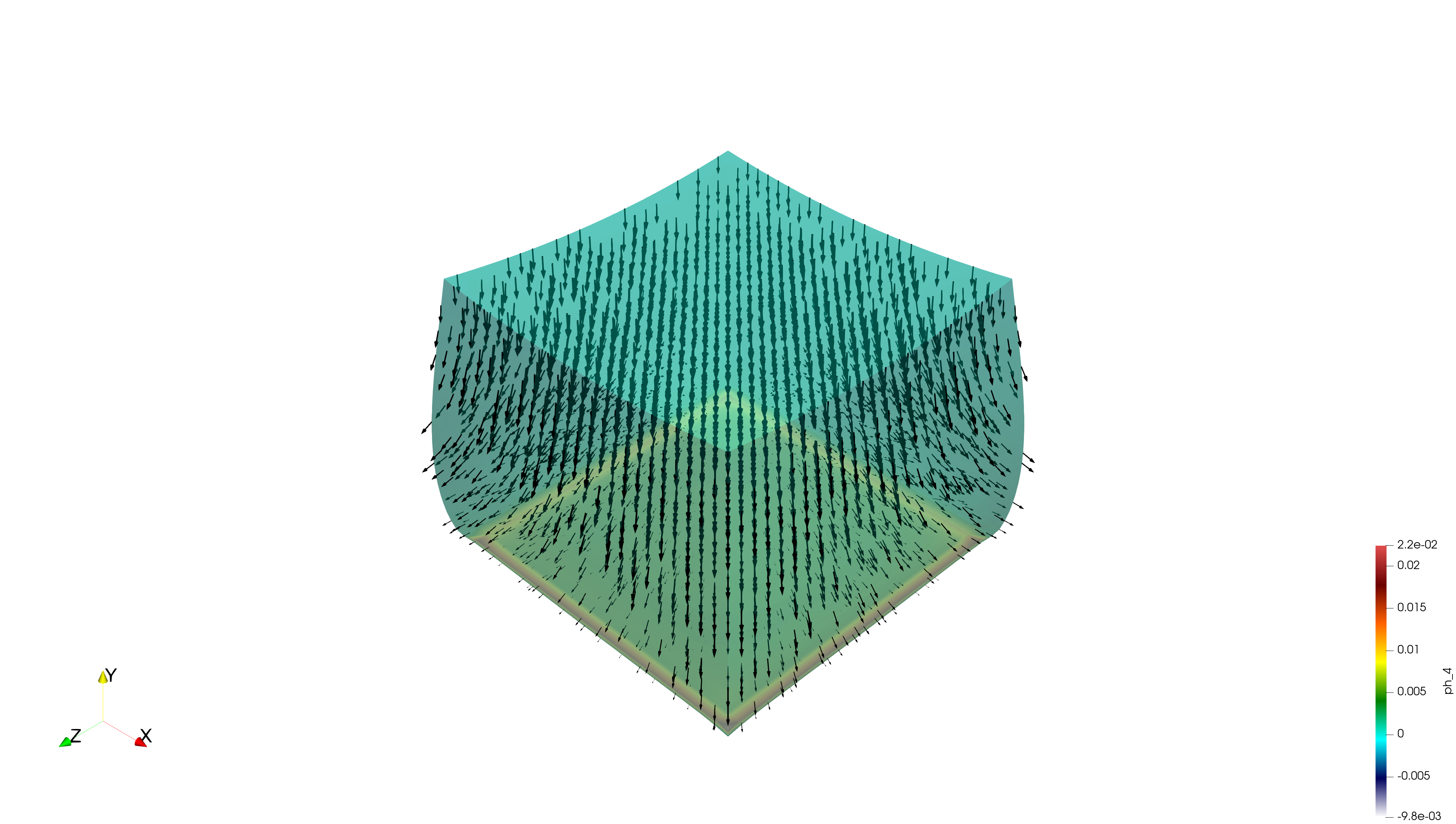}
		{\footnotesize $p_{4,h}, \bu_{4,h}, \nu=0.50$}
	\end{minipage}
	\caption{Test \ref{subsec:numerical-experiment-cube3D-bottomfixed}. Top view of the unit cube 3D representation of the displacement/velocity field, together with the corresponding pressure surface plots for the first and fourth lowest order eigenmodes with $\nu=0.35$ and $\nu=0.50$.}
	\label{fig:cubos-3D-bottomfix}
\end{figure}

\begin{figure}[hbt!]
	\centering
	\includegraphics[scale=0.45]{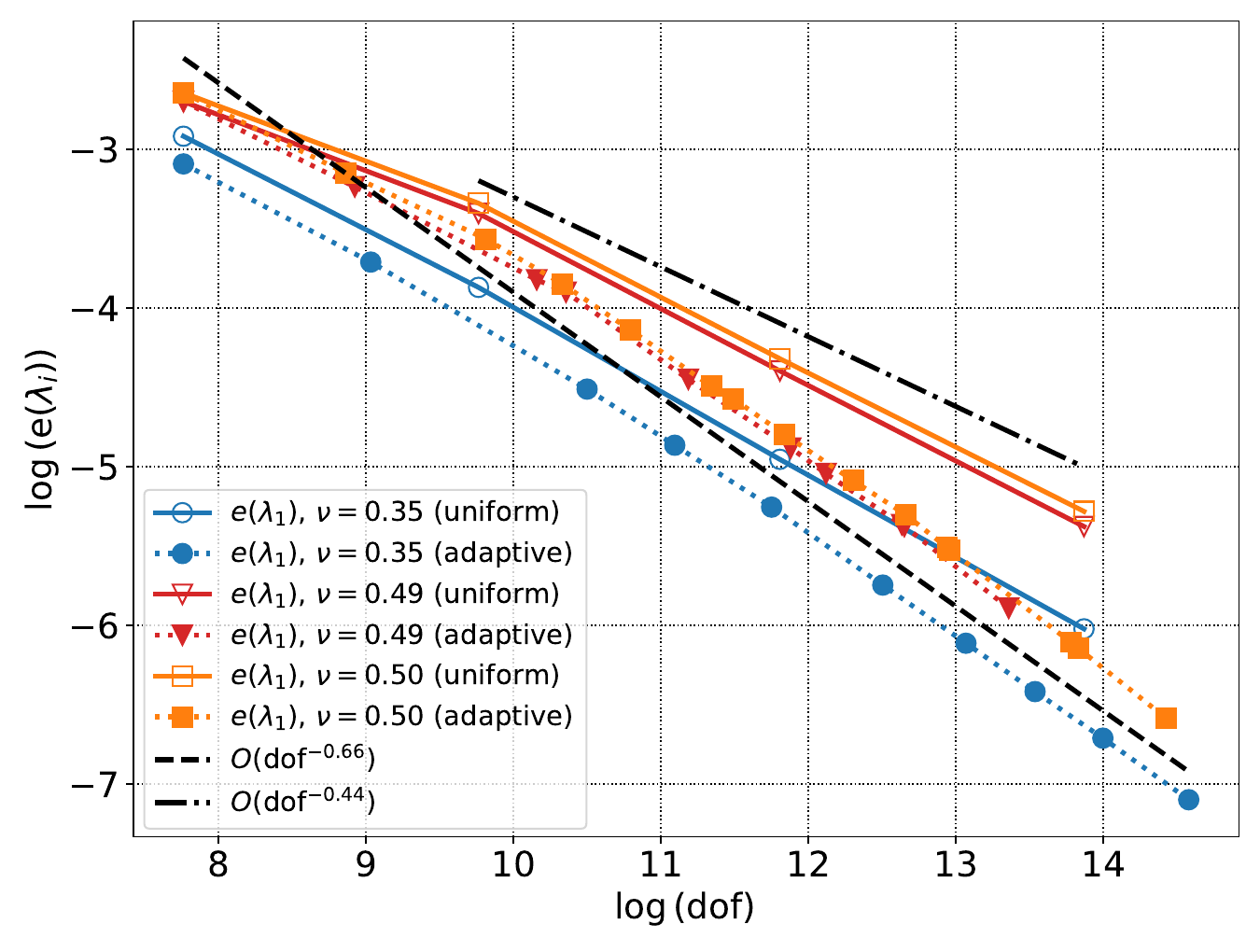}
	\caption{Example \ref{subsec:2d-lshape}. Error curves obtained from the adaptive algorithm in the unit cube domain compared with the lines $\mathcal{O}(\texttt{dof}^{-0.45})$ and $\mathcal{O}(\texttt{dof}^{-0.66})$.}
	\label{fig:cube3D-afem-error}
\end{figure}

\begin{figure}[hbt!]
	\centering
	\begin{minipage}{0.49\linewidth}\centering
	\includegraphics[scale=0.45]{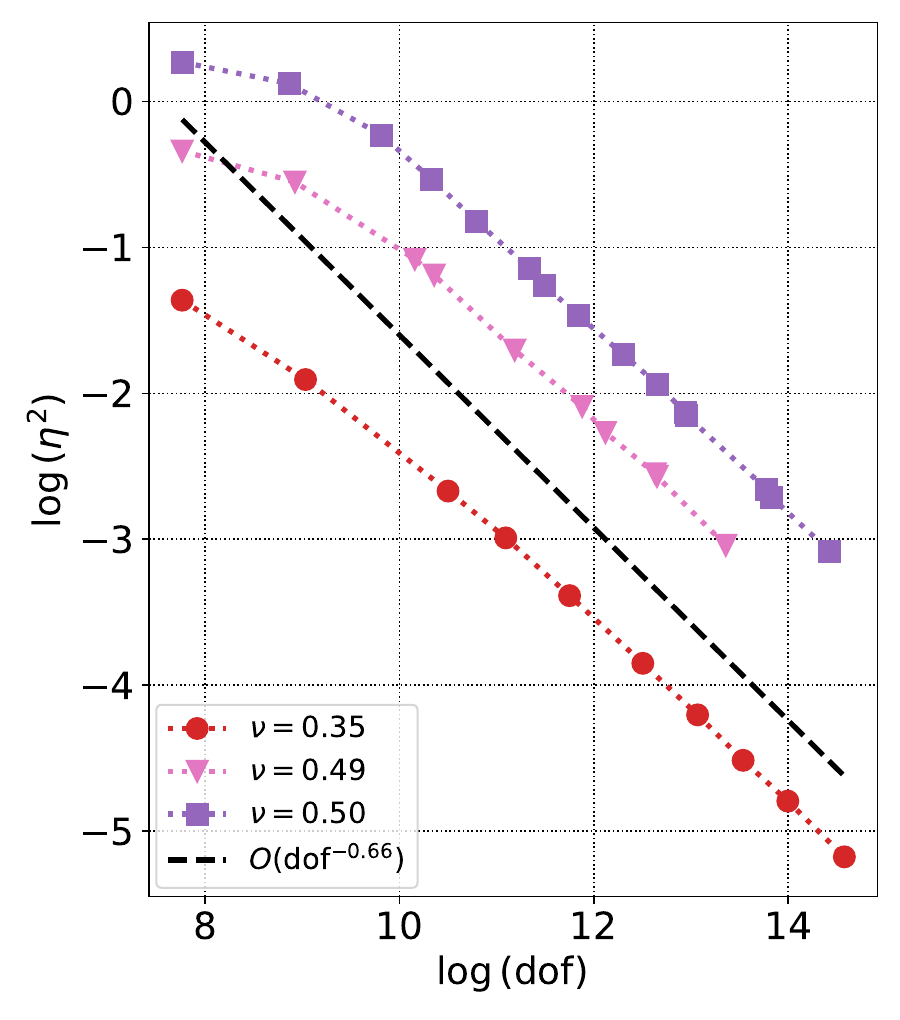}
\end{minipage}
\begin{minipage}{0.49\linewidth}\centering
	\includegraphics[scale=0.45]{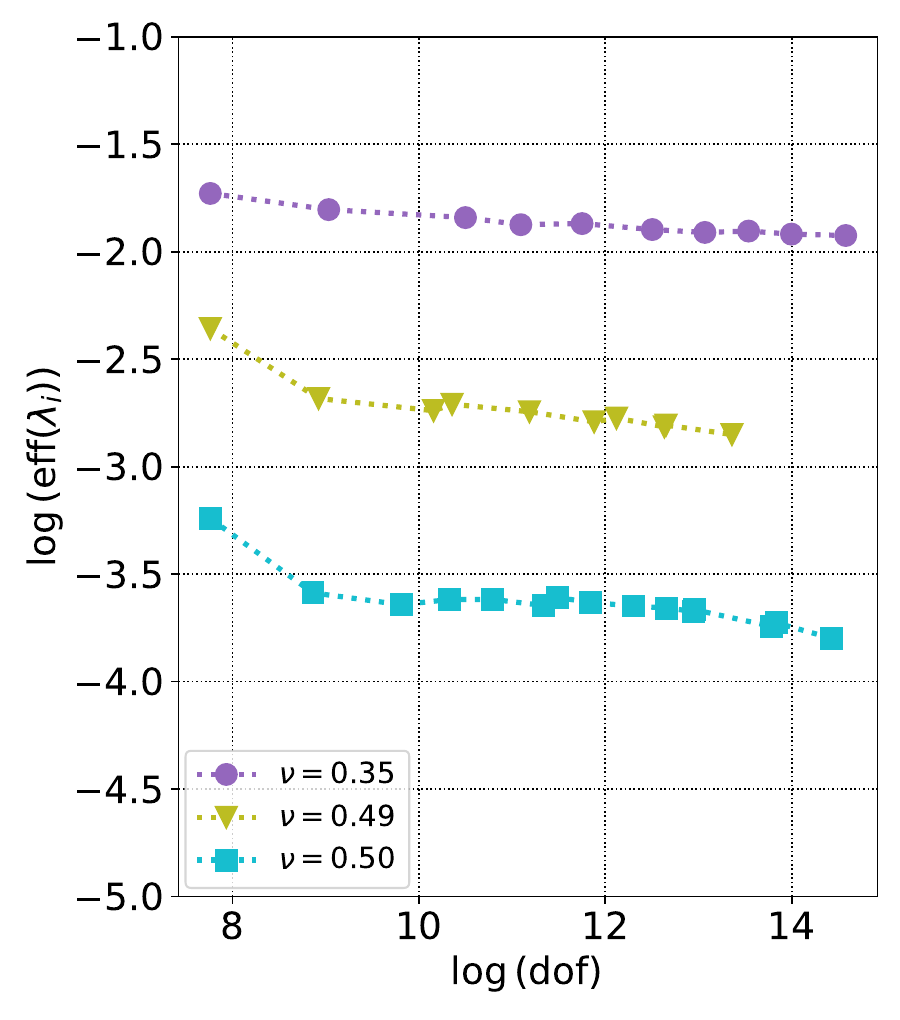}
\end{minipage}
	\caption{Example \ref{subsec:2d-lshape}. Estimator and efficiency curves obtained from the adaptive algorithm in the unit cube domain with different values of $\nu$.}
	\label{fig:cube3D-afem-effectivity}
\end{figure}

\subsection{Robustness test on 2D and 3D geometries}\label{subsec:numerical-experiment-robustness}
In this last experiment we assess the robustness of our estimator in two and three dimensions. More precisely, we consider the unit square $\Omega:=(0,1)^2$ and the unit cube domain $\Omega:=(0,1)^3$, respectively. We will give different constant values to Young's modulus $E$ in order to observe the behavior of $\texttt{eff}$. Let us note that from \eqref{eq:scaled} we have $\mu$ and $\lambda$ we have that $\mu=E/2$ and $\lambda=\frac{2\mu\nu}{1-2\nu}$. For this experiment, $\rho$ is set to be 1. The boundary conditions for this test are the same as Section \ref{subsec:numerical-experiment-square2D} for the square domain and Section \ref{subsec:numerical-experiment-cube3D-bottomfixed} for the cube domain, i.e., we assume that both geometries are fixed at the bottom plane and free of stress in the rest of the facets. Because of the results obtained in previous sections, it is enough to the test the estimator using uniform refinements.

The extrapolated values for this experiment are calculated from a reference value for each value of $\nu$. More precisely, in each test we have the following extrapolated values for $E=10^j, j=1,2,3...$.

\begin{table}[hbt!]
	{\footnotesize
		\centering
		\begin{tabular}{|c|c| c |}
			\hline
			\hline
			&Square domain & Cube domain\\
			\hline
			$\nu$             & $\widehat{\kappa}_1$ & $\widehat{\kappa}_1$ \\
			\hline
			0.35  & $0.46355423498481496\cdot E$ &$0.444317882233217\cdot E$ \\
			0.49 & $0.48938358373431\cdot E$   &$0.44833605908518\cdot E$  \\
			0.5 &   $0.492273855811713\cdot E$  &$0.44915941661888\cdot E$  \\
			\hline
			\hline
		\end{tabular}
		\caption{Test \ref{subsec:numerical-experiment-robustness}. Reference lowest computed eigenvalues for different values of $\nu$ and $E=10^j, j=1,2,3...$ .}
		\label{table:robustness-references}}
\end{table}

Although uniform refinements have been used, giving a convergence like $\mathcal{O}(h^{1.32})$, the estimator remains robust with respect to the physical parameters. In fact,  in Table \ref{tabla:robustness-square-2D3D} we observe different efficiency indexes obtained by manipulating the value of $E$ with several powers of $10$. For each value of $\nu$, properly bounded efficiency indexed are obtained, which are shown to be independent of the value of $E$, even in the limit $\nu=0.5$. We remark that similar values were obtained when testing with $E=10^{-j},j=0,1,2$, confirming the robustness of the proposed estimator.

\begin{table}[hbt!]
	{\footnotesize
		\centering
		\begin{tabular}{|l|ccc|ccc|ccc|}
			\hline
			&\multicolumn{3}{c}{$\nu=0.35$} & \multicolumn{3}{|c}{$\nu=0.49$} & \multicolumn{3}{|c|}{$\nu=0.50$}\\
			\hline
			\multicolumn{10}{|c|}{Unit square domain $\Omega=(0,1)^2$}\\
			\hline
			\texttt{dof}	&\multicolumn{3}{c}{$\texttt{eff}(\widehat{\kappa}_1)$} & \multicolumn{3}{|c}{$\texttt{eff}(\widehat{\kappa}_1)$} & \multicolumn{3}{|c|}{$\texttt{eff}(\widehat{\kappa}_1)$}\\
			\hline
			84 & 1.94e-01 & 1.94e-01 & 1.94e-01 & 9.05e-02 & 9.05e-02 & 9.05e-02 & 8.25e-02 & 8.25e-02 & 8.25e-02 \\
			208 & 1.96e-01 & 1.96e-01 & 1.96e-01 & 8.95e-02 & 8.95e-02 & 8.95e-02 & 8.07e-02 & 8.07e-02 & 8.07e-02 \\
			624 & 2.01e-01 & 2.01e-01 & 2.01e-01 & 9.07e-02 & 9.07e-02 & 9.07e-02 & 8.12e-02 & 8.12e-02 & 8.12e-02 \\
			2128 & 2.07e-01 & 2.07e-01 & 2.07e-01 & 9.28e-02 & 9.28e-02 & 9.28e-02 & 8.27e-02 & 8.27e-02 & 8.27e-02 \\
			7824 & 2.10e-01 & 2.10e-01 & 2.10e-01 & 9.42e-02 & 9.42e-02 & 9.42e-02 & 8.39e-02 & 8.39e-02 & 8.39e-02 \\
			29968 & 2.11e-01 & 2.11e-01 & 2.11e-01 & 9.44e-02 & 9.44e-02 & 9.44e-02 & 8.41e-02 & 8.41e-02 & 8.41e-02 \\
			\hline
			\multicolumn{10}{|c|}{Unit cube domain $\Omega=(0,1)^3$}\\
			\hline
			\texttt{dof}	&\multicolumn{3}{c}{$\texttt{eff}(\widehat{\kappa}_1)$} & \multicolumn{3}{|c}{$\texttt{eff}(\widehat{\kappa}_1)$} & \multicolumn{3}{|c|}{$\texttt{eff}(\widehat{\kappa}_1)$}\\
			\hline
			2351  & 6.82e-01 & 6.82e-01 & 6.82e-01 & 1.77e-01 & 1.77e-01 & 1.77e-01 & 1.42e-01 & 1.42e-01 & 1.42e-01 \\
			17384 & 6.45e-01 & 6.45e-01 & 6.45e-01 & 1.02e-01 & 1.02e-01 & 1.02e-01 & 7.76e-02 & 7.76e-02 & 7.76e-02 \\
			134172 & 7.73e-01 & 7.73e-01 & 7.73e-01 & 1.04e-01 & 1.04e-01 & 1.04e-01 & 7.84e-02 & 7.84e-02 & 7.84e-02 \\
			1055284 & 8.21e-01 & 8.21e-01 & 8.21e-01 & 9.86e-02 & 9.86e-02 & 9.86e-02 & 7.32e-02 & 7.32e-02 & 7.32e-02 \\
			\hline
			$E$& $10$ &$10^2$ & $10^4$  & $10$ &$10^2$ & $10^4$ & $10$ &$10^2$ & $10^4$ \\
			\hline
			\hline
		\end{tabular}
		\caption{Test \ref{subsec:numerical-experiment-robustness}. Efficiency indexes for different values of $\nu$ and $E$ on the unit square and unit cube domain.}
		\label{tabla:robustness-square-2D3D}}
\end{table}

\begin{acknowledgements}

This work was advanced during the ICOSAHOM 2023 conference, Seoul, Korea. AK was  partially
supported by the Sponsored Research \& Industrial Consultancy (SRIC), Indian Institute of Technology Roorkee,
India through the faculty initiation grant MTD/FIG/100878; by SERB MATRICS grant
MTR/2020/000303; by SERB Core research grant CRG/2021/002569;
FL was partially supported by DIUBB through project 2120173 GI/C Universidad del B\'io-B\'io
and ANID-Chile through FONDECYT project 11200529 (Chile).
DM was partially supported by the National Agency for Research and Development, ANID-Chile through project Anillo of
Computational Mathematics for Desalination Processes ACT210087, by FONDECYT project 1220881,
and by project Centro de Modelamiento Matemático (CMM), FB210005, BASAL funds for centers of excellence.
JV was partially supported by the National Agency for Research and Development, ANID-Chile through project Anillo of
Computational Mathematics for Desalination Processes ACT210087, by FONDECYT Postdoctorado project 3230302,
and by project Centro de Modelamiento Matemático (CMM), FB210005, BASAL funds for centers of excellence.
\end{acknowledgements}

\bibliographystyle{siam}
\bibliography{references}

\end{document}